\newtheorem{Theorem}{Theorem}[section]
\newtheorem{Lemma}[Theorem]{Lemma}
\newtheorem{Corollary}[Theorem]{Corollary}
\newtheorem{Proposition}[Theorem]{Proposition}
\newtheorem{corollary}[Theorem]{Corollary}
\newtheorem{proposition}[Theorem]{Proposition}
\newtheorem{question}[Theorem]{Question}
\theoremstyle{definition}
\newtheorem{Definition}[Theorem]{Definition}
\newcommand{\R}{\mathbb{R}}
\newcommand{\N}{\mathbb{N}}
\newcommand{\Lip}{{\mathrm{Lip}}_0}
\newcommand{\Lin}{\mathcal{L}}
\newcommand{\eps}{\varepsilon} 
\newcommand{\slope}{\textup{slope}}
\newcommand{\conv}{\textup{conv}}
\newcommand{\re}{\textup{Re}\,}
\newcommand{\Id}{\textup{Id}}
\begin{document}

\title{On the Lipschitz numerical index of Banach spaces}
\keywords{}
\subjclass[2020]{46B20, 46B80, 47A12}

\author[Choi]{Geunsu Choi}
\address[Choi]{Department of Mathematics Education,
	Dongguk University, Seoul 04620, Republic of Korea}
\email{chlrmstn90@gmail.com}

	\author[Jung]{Mingu Jung}
	\address[Jung]{Basic Science Research Institute and Department of Mathematics, POSTECH, Pohang 790-784, Republic of Korea \newline
		\href{http://orcid.org/0000-0000-0000-0000}{ORCID: \texttt{0000-0003-2240-2855} }}
	\email{\texttt{jmingoo@postech.ac.kr}}

\author[Tag]{Hyung-Joon Tag}
\address[Tag]{Research Institute for Natural Sciences, Hanyang University, Seoul 04763, Republic of Korea}
\email{hjtag4@hanyang.ac.kr}

\thanks{The first author was supported by Basic Science Research Program through the National Research Foundation of
Korea(NRF) funded by the Ministry of Education, Science and Technology [NRF-2020R1A2C1A01010377]. The second author was supported by NRF [NRF-2019R1A2C1003857] and by POSTECH Basic Science Research Institute Grant [NRF-2021R1A6A1A10042944]. The third author was supported by Basic Science Research Program through the National Research Foundation of Korea(NRF) funded by the Ministry of Education, Science and Technology [NRF-2020R1C1C1A01010133]}

\begin{abstract}	
We provide some new consequences on the Lipschitz numerical radius and index which were introduced recently. More precisely, we give some renorming results on the Lipschitz numerical index, introduce a concept of Lipschitz numerical radius attaining functions in order to show that the denseness fails for an arbitrary Banach space, and study a Lipschitz version of Daugavet centers. Furthermore, we discuss the Lipschitz numerical index of vector-valued function spaces, absolute sums of Banach spaces, the K\"othe-Bochner spaces, and Banach spaces which contain a dense union of increasing family of one-complemented subspaces.
\end{abstract}

\maketitle

\section{Introduction and Preliminaries}

In this article, we study the Lipschitz numerical radius and index of Banach spaces and Lipschitz versions of related topics. The numerical radius and index of Banach spaces have been active research area for decades, and there have been fruitful results connected to other geometrical properties of Banach spaces. There also have been a plethora of results extending the concepts of the numerical radius and index by considering nonlinear maps such as polynomials and holomorphic functions \cite{AK, CK, CGKM, CGMM, GGMMM, FGKLM, H}. 
However, there are relatively fewer results known with respect to Lipschitz maps \cite{KMMW, W1, WHT}. So we aim to provide additional observations on the Lipschitz numerical radius, index and the related notions such as the Daugavet centers in the Lipschitz setting. 

Before providing the overview of extending the concepts to the Lipschitz settings, let us fix some notations. Let $X$ and $Y$ be Banach spaces and $X^*$ be the topological dual space of $X$. The closed unit ball and the unit sphere of $X$ are denoted by $B_X$ and $S_X$, respectively. We write by $\Lin(X,Y)$ the space of all bounded linear operators from $X$ into $Y$.

For a bounded linear operator $T \in \Lin (X, X)$, the \emph{numerical radius} $\nu(T)$ of $T$ is defined by
\begin{equation}\label{eq:classicnr}
\nu(T) = \sup \{ |x^*(Tx)| : (x,x^*) \in S_X \times S_{X^*},\, x^*(x)=1 \},
\end{equation}
and the \emph{numerical index} $n(X)$ of $X$ is given by
$$
n(X) = \inf \{\nu(T): T \in S_{\Lin(X,X)}\}.
$$
Useful information on the numerical radius and the index can be found in \cite{BD2}. We also refer to \cite{KMP} for a comprehensive summary of these topics. 

The numerical index is closely related to the \emph{alternative Daugavet property}, a property for $X$ satisfying the equation
\begin{equation}\label{eq:ADP}
\max_{|\omega| = 1}\|\Id_X + \omega T\| = 1 + \|T\|
\end{equation} 
for every rank-one operator $T \in \Lin(X, X)$. If the above equality holds for $\omega=1$, we say that $X$ has the {\it Daugavet property}. It is shown in \cite[Remark, pp. 483]{DMPW} that a Banach space $X$ has the numerical index 1 if and only if every operator $T \in \Lin(X,X)$ satisfies the equation (\ref{eq:ADP}). Hence every Banach space with the numerical index 1 has the alternative Daugavet property. Lush spaces such as $C(K)$, $L_1(\mu)$, and the uniform algebra are known to have the numerical index 1 \cite[Proposition 2.2]{BKMW}. On the other hand, $L_1(\mu)$ with a non-atomic measure $\mu$ \cite{Lo}, $C(K)$ with a perfect, compact Hausdorff space $K$ \cite{Dau}, and the uniform algebra which Shilov boundary contains no isolated point \cite{W, Wo} have the Daugavet property. Another direction of research related to the Daugavet property is considering so called Daugavet centers. Given Banach spaces $X$ and $Y$, recall that $G \in \Lin(X,Y)$ is said to be a {\it Daugavet center} if
\begin{equation*}
	\|G + T\| = \|G\| + \|T\|
\end{equation*}
for every rank-one operator $T \in \Lin(X,Y)$. Daugavet centers are introduced in \cite{BK} to consider an operator that behaves similarly to $\Id_X$ on a Banach space $X$ with the Daugavet property. For more details on Daugavet centers and their connection to recent developments in the theory of numerical index, we refer to \cite{BK, I, KMMPQ, KMMP}. 

As mentioned earlier, nonlinear versions of these concepts have been studied by many researchers. Recently, the Lipschitz numercial radius and index have been introduced in \cite{WHT, W1} via the Lipschitz numerical range, which was inspired by the numerical range of operators on nonlinear Hilbert spaces \cite{Z}. As usual, we denote by $\Lip(X,Y)$ the space of Lipschitz maps from $X$ into $Y$ that vanishes at 0, equipped with the Lipschitz norm $\|\cdot\|_L$ given by 
\[
\|T \|_L = \sup \left\{ \frac{\|Tx - Ty\|}{\|x-y\|} : x, y \in X,\, x \neq y \right\}. 
\]
For $T \in \Lip(X,X)$, the \emph{Lipschitz numerical radius} of $T$ is defined by
\begin{equation}\label{eq:Lipnr}
\nu_L(T) = \sup \left\{ \frac{|x^*(Tx-Ty)|}{\|x-y\|^2} : x^* \in D(x-y), \, x,y \in X, \, x \neq y \right\},
\end{equation}
where 
\[
D(x) = \{x^* \in X^* : x^*(x) = \|x^*\|\|x\| = \|x\|^2\}.
\]
Throughout the paper, we denote by $\Pi_X \subset X \times X \times X^*$ the \emph{state of $X$} given by 
$$
\Pi_X = \{(x,y,x^*): x, y \in X,\, x \neq y,\, x^* \in D(x-y)\}
$$
and denote by $\pi$ the projection from $X \times X \times X^*$ to $X \times X$ given by $\pi (x,y,x^*)= (x,y)$. 
We shall simply write $\Pi$ without the subscript $X$ when there is no possible confusion. 
In a similar way to the numerical index, 
the \emph{Lipschitz numerical index} $n_L(X)$ of $X$ is defined by
$$
n_L(X) = \inf \{\nu_L(T) : T \in S_{\Lip(X,X)}\}.
$$

A natural, but unsolved, question is whether the Lipschitz numerical index $n_L(X)$ is the same as the numerical index $n(X)$ or not. Clearly, we have $n_L(X) \leq n(X)$. Although the reverse inequality is not known in general, the equality turns out to be true for certain cases. For example, these notions coincide for separable Banach spaces with the Radon-Nikod\'ym property \cite{WHT}. 
Moreover, it is observed that if $X$ has the Radon-Nikod\'ym property, or $X$ is an Asplund space, or $X$ does contain $\ell_1$, or $X$ has the convex point of continuity property, then $n(X)=1$ is equivalent to $n_L (X) = 1$ \cite{KMMW}.
%Moreover, the stability results on the Lipschitz numerical index of $C(K, X)$, $L_1(\mu, X)$, $L_{\infty}(\mu, X)$, and $c_0$-, $\ell_1$-, and $\ell_{\infty}$-sums of Banach spaces are shown \cite{WHT}. 

%%%%

The article consists of three main parts. All the necessary terminologies and conventions will be presented thoroughly in each subsection because the article covers a wide range of topics. In Section \ref{subsection:renorming}, we extend the renorming results in the numerical index by showing that the Lipschitz numerical index varies continuously with respect to the equivalent renorming. In Section \ref{subsection:attaining}, we define a notion of Lipschitz numerical radius attaining maps which turns out to be related to numerical radius attaining operators. In particular, we show that the set of Lipschitz numerical radius attaining maps is not dense in $\Lip(X, X)$ for any Banach space $X$, while the G\^ateaux derivative of a Lipschitz numerical radius attaining map can be dealt as a numerical radius attaining operator. Section \ref{subsection:center}, which was inspired by \cite{KMMW}, concerns a Lipschitz version of Daugavet centers. We compare this notion with the original one and prove that these notions provided the linearity are actually equivalent to each other. 

%Plenty of Lipschitz maps beyond the weakly compact ones are introduced to satisfy the previous equation \eqref{eq:Daucent}.

In Section \ref{section:function-algebra}, we present the stability results on the Lipschitz numerical index of vector-valued function space $A(\Omega, X)$. More precisely, we show in Section \ref{subsection:omega} that $n_L (X) \leq n_L (A(\Omega, X))$ under a certain assumption on the set of strong peak points of the base algebra. In Section \ref{subsec:A(K,X)}, we show that same inequality also holds for the space $A(K, X)$. Here we use the Choquet boundary of the base algebra instead of the set of strong peak points. Furthermore, we also discuss when the equality of these Lipschitz numerical radii holds for the spaces $A(\Omega, X)$ and $A(K, X)$. As an application of the results from Section \ref{subsection:omega}, assuming the set of strong peak points is a norming set, we show in Section \ref{subsection:DP} that $A(\Omega, X)$ inherits the Daugavet property from $X$. A similar result on $A(K, X)$ will be also provided in the same section. 

In Section \ref{section:absolute-sum}, we study the stability results of Lipschitz numerical index especially on the absolute sums of Banach spaces (Section \ref{subsection:absolute-sum}), the K\"othe-Bochner spaces (Section \ref{subsection:Kothe}), and the Banach spaces which contain a dense increasing family of one-complemented subspaces (Section \ref{subsection:increasing}). As a consequence, we show in Section \ref{subsection:ell_p-sum} that the Lipschitz numerical index of $\ell_p (X)$ is the limit of the sequence of the Lipschitz numerical index of $\ell_p^m (X)$, and the Lipschitz numerical index of Lebesgue-Bochner space $L_p(\mu, X)$ coincides with that of $\ell_p(X)$ for any Banach space $X$ by using the stability result on the $\ell_p$-sum. 

\section{Lipschitz numerical radii, indices, and Daugavet centers}

In this section, we provide the Lipschitz analogues of the properties that are well-known for classical numerical radii and indices and their connection to the numerical radius attaining operators and the Daugavet centers. 

\subsection{Lipschitz numerical index and renorming}\label{subsection:renorming}

Let $\mathcal{E}(X)$ be the set of all equivalent norms on $X$. If we equip this set with the following metric 
\[
\mu (p, q) = \log \left( \min \left\{ k \geq 1 : p \leq k q, \, q \leq k p \right\} \right), 
\]
then the set $(\mathcal{E} (X), \mu)$ is known to be arcwise connected \cite[Theorem 1]{RA}. We denote by $M(E)$ the space of all nonempty, bounded, closed subsets of a metric space $E$ endowed with the Hausdorff metric $d$ that is given by 
\[
d(A,B) = \max\left\{ \sup_{x \in A} d(x, B), \sup_{x \in B} d(x,A)\right \}\quad \text{for $A, B \in M(E)$}. 
\]

Let $(X, p)$ be a Banach space equipped with the norm $p$. 
For convenience, we denote also by $p$ the dual norm of $p$. 
We define the set $\Pi_p$ by
\[
\Pi_p = \{(x, y, x^*) \in X \times X \times X^*: x^* (x-y) = p(x^*) p (x-y) = \{p(x-y)\}^2 \}.
\]
and {\it Lipschitz numerical radius $\nu_L^p (T)$ of $T\in \Lip(X, X)$ with respect to the norm $p$} by 
\[
\nu_L^p (T) = \sup \left\{ \left| \frac{x^* (Tx - Ty)}{\{p(x-y)\}^2} \right| : (x, y, x^*) \in \Pi_p \right\}. 
\]
This allows us to define the \emph{Lipschitz numerical index} $n_L^p(X)$ \emph{with respect to the norm $p$} by
$$
n_L^p(X) = \inf \{\nu_L^p(T): T \in S_{\Lip((X,p),(X,p))}\}
$$
and the set $\mathcal{N}_L (X)$ of values of Lipschitz numerical index that $X$ can have up to renorming, i.e., 
\[
\mathcal{N}_L (X) := \{ n_L^p (X) : p \in \mathcal{E} (X)\}. 
\]

Our aim is to prove that $\mathcal{N}_L (X)$ is an interval for every Banach space $X$. To this end, we first prove that the map from an equivalent norm on $X$ to the Lipschitz numerical index with respect to the same norm is continuous with respect to the metric $\mu$. The similar results with respect to the numerical radius and index are shown in \cite[Corollary 18.4]{BD2} (see also \cite[Proposition 2]{FMP}). 

\begin{Proposition}\label{Prop:unifconti}
	Let $X$ be a Banach space and $T \in \Lip (X, X)$. Then the mapping $p \mapsto \nu_L^p (T)$ from $\mathcal{E} (X)$ to $\mathbb{R}$ is uniformly continuous on each bounded subset of $\mathcal{E} (X)$. 
\end{Proposition}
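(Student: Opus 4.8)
The plan is to estimate, for two equivalent norms $p$ and $q$ that are close in the metric $\mu$, the difference $|\nu_L^p(T) - \nu_L^q(T)|$ directly from the defining supremum, by transferring each state $(x,y,x^*) \in \Pi_p$ to an approximate state for $q$. First I would fix a bounded subset $\mathcal{B} \subset \mathcal{E}(X)$; say every $p \in \mathcal{B}$ satisfies $\tfrac{1}{M}\|\cdot\| \le p \le M\|\cdot\|$ for some $M \ge 1$ and a fixed reference norm $\|\cdot\|$, and $\|T\|_L \le L$. Given $p, q \in \mathcal{B}$ with $\mu(p,q) = \log k$, so that $q \le kp$ and $p \le kq$ (and likewise for the dual norms), the quotient $\bigl| x^*(Tx - Ty)\bigr| / \{p(x-y)\}^2$ differs from $\bigl| x^*(Tx - Ty)\bigr| / \{q(x-y)\}^2$ by a factor controlled by $k^2$, and since the numerator is bounded by $p(x^*)\,p(x-y)\,\|T\|_L\,\|x-y\|$ and all norms are comparable to $\|\cdot\|$ on $\mathcal{B}$, one gets a bound of the form $|\nu_L^p(T) - \nu_L^q(T)| \le C(M,L)\,(k^2 - 1)$ whenever the states are literally shared.

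The genuine difficulty is that $\Pi_p \ne \Pi_q$ in general: a triple $(x,y,x^*) \in \Pi_p$ need not lie in $\Pi_q$, because the duality condition $x^*(x-y) = q(x^*)q(x-y) = \{q(x-y)\}^2$ is a rigid equality, not an inequality that degrades continuously. So the heart of the argument is a perturbation lemma: given $(x,y,x^*) \in \Pi_p$, produce $(\tilde x, \tilde y, \tilde x^*) \in \Pi_q$ with $\pi(\tilde x, \tilde y)$ close to $\pi(x,y)$ (or directly with the Lipschitz-numerical-radius quotient close), the closeness being governed by $k - 1$. I expect this to be the main obstacle, and I would handle it by keeping $\tilde x - \tilde y$ parallel to $x - y$ — i.e. set $u = x - y$, and replace $x^*$ by a support functional $\tilde x^*$ for $u$ in the $q$-duality, normalized so that $\tilde x^*(u) = \{q(u)\}^2$; then since $x^*$ and $\tilde x^*$ are both (rescaled) support functionals at $u$ for comparable norms, $\|x^* - \tilde x^*\|$ is $O(k-1)$ in the relevant sense, and the numerator $\tilde x^*(Tx - Ty) = \tilde x^*(T(y+u) - Ty)$ is within $O(k-1)$ of $x^*(Tx-Ty)$ using $\|T\|_L \le L$ and the size bounds on $\mathcal{B}$. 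One must be slightly careful that the map $u \mapsto (y+u, y)$ used to realize the state is legitimate and that translating by a common $y$ does not change the quotient — but it does not, since $Tx - Ty$ and $x - y$ only see the difference.

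Assembling these pieces: for every $\eta > 0$ choose a state for $\nu_L^p(T)$ within $\eta$ of the supremum, push it to a $q$-state via the perturbation lemma, and conclude $\nu_L^q(T) \ge \nu_L^p(T) - \eta - \omega(\mu(p,q))$ where $\omega(t) \to 0$ as $t \to 0$, uniformly over $p,q \in \mathcal{B}$ and depending only on $M$ and $L$; by symmetry the same holds with $p,q$ interchanged, giving $|\nu_L^p(T) - \nu_L^q(T)| \le \omega(\mu(p,q))$ and hence uniform continuity on $\mathcal{B}$. A remark worth recording is that the modulus $\omega$ depends on $T$ only through $\|T\|_L$, so in fact the estimate is uniform over Lipschitz-norm-bounded families of $T$ as well, which is what will be needed when we later pass to $n_L^p(X) = \inf_{T} \nu_L^p(T)$ and deduce that $p \mapsto n_L^p(X)$ is continuous and that $\mathcal{N}_L(X)$ is an interval.
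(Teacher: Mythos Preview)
Your proposal has a genuine gap at the perturbation step. You want to keep the direction $u = x-y$ fixed and replace the $p$-support functional $x^*$ at $u$ by a $q$-support functional $\tilde x^*$ at the \emph{same} point $u$, asserting that ``since $x^*$ and $\tilde x^*$ are both (rescaled) support functionals at $u$ for comparable norms, $\|x^* - \tilde x^*\|$ is $O(k-1)$.'' This is false in general: the duality mapping is not continuous, so two support functionals at the same point for arbitrarily close norms can be far apart. For a concrete obstruction, take $X = \mathbb{R}^2$ with $p$ the $\ell_1$-norm and $q$ a nearby smooth strictly convex norm; at $u=(1,0)$ the $p$-support functionals fill the whole segment $\{(1,t):|t|\le 1\}$ while the $q$-support functional is the single point $(1,0)$. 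The particular $x^*$ you are handed by the near-supremum condition could well be $(1,1)$, and there is no $q$-support functional at $u$ anywhere near it. Consequently $|\tilde x^*(Tx-Ty)|$ has no reason to be close to $|x^*(Tx-Ty)|$, and the inequality $\nu_L^q(T) \ge \nu_L^p(T) - \eta - \omega(\mu(p,q))$ does not follow.

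The fix, which is exactly what the paper does, is to allow perturbation of the direction as well as the functional. The paper invokes the Bonsall--Duncan result (\cite[Theorem~3]{BD2}) that the \emph{classical} state spaces $\{(u,u^*): p(u)=p(u^*)=u^*(u)=1\}$ vary continuously in Hausdorff distance with respect to $\mu$, uniformly on bounded sets of $\mathcal{E}(X)$. Given a near-optimal Lipschitz state $(x,y,x^*)\in\Pi_q$, one normalizes to the classical $q$-state $\bigl(\tfrac{x-y}{q(x-y)},\tfrac{x^*}{q(x^*)}\bigr)$, uses Bonsall--Duncan to find a nearby classical $p$-state $(u,u^*)$ (with \emph{both} coordinates perturbed), and then rebuilds a Lipschitz $p$-state by setting $x'=y+q(x-y)u$, $y'=y$, and rescaling $u^*$. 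The point perturbation is absorbed by the Lipschitz constant of $T$, and the functional perturbation by the uniform norm bounds on the ball $G_\kappa$, yielding $|\nu_L^p(T)-\nu_L^q(T)|\le 2\eps e^{\kappa}\|T\|_L$. Your final remark --- that the modulus depends on $T$ only through $\|T\|_L$ --- is correct and is indeed what the paper uses to pass to $n_L^p(X)$.
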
 

\begin{proof}
	Let $G_\kappa := \{ p \in \mathcal{E} (X) : \mu (p, \| \cdot\| ) < \kappa \}$ and $\eps >0$. It is well-known from \cite[Theorem 3]{BD2} that there exists $\delta >0$ such that 
	\begin{equation}\label{eq:pqapprox}
		d(\Pi_p , \Pi_q) < \eps \quad \text{whenever } \mu (p,q) < \delta \text{ with } p,q \in G_\kappa.
	\end{equation}
		Now, fix $p, q \in G_k$ satisfying (\ref{eq:pqapprox}). For a given $\eta >0$, choose $(x,y,x^*) \in \Pi_q$ so that 
	\[
	\frac{ | x^* (Tx-Ty)|}{\{q(x-y)\}^2} > \nu_L^q (T) - \eta. 
	\]
	Since $d(\Pi_p , \Pi_q) < \eps$, we can also find $(u, u^*) \in \Pi_p$ such that 
	\[
	\left\| u -\frac{x-y}{q(x-y)} \right\| + \left\| u^* - \frac{x^*}{q(x^*)} \right\| < \eps. 
	\]
	If we let $x' = y + q(x-y)u$ and $y' = y$, it is easy to see from the direct computation that 
	$\| x' - x \| < q(x-y) \eps$.
	Furthermore, notice that $(x', y', q(x-y)u^*) \in \Pi_p$. 
	Then, we obtain 
	\begin{align*}
		&\left| \frac{x^* (Tx- Ty)}{\{q(x-y)\}^2} - \frac{[q(x-y)u^* ](Tx'-Ty')}{\{p(x'-y')\}^2} \right| \\
		&\leq \left| \frac{x^* (Tx') - x^* ( Tx)}{\{q(x-y)\}^2}\right| + \left| \frac{x^* (Tx') - [q(x-y)u^* ](Tx')}{\{q(x-y)\}^2} - \frac{x^* (Ty) - [q(x-y)u^* ](Ty)}{\{q(x-y)\}^2} \right| \\
		&\leq \frac{\|x^*\| \|T\|_L q(x-y)\eps }{\{q(x-y)\}^2} + \left| \frac{ [x^* - q(x^*) u^*](Tx'-Ty)}{\{ q(x^*) \}^2} \right| \quad (\text{since } q(x-y)=q(x^*)) \\
		&\leq e^\kappa \eps {\|T\|_L} + \frac{\{ q(x^*) \}^2 \|T\|_L \|u\| \eps}{\{ q(x^*) \}^2} \\
		&= 2 \eps e^{\kappa} \|T\|_L. 
	\end{align*} 
	This implies that 
	$\nu_L^q (T) \leq \nu_L^p (T) + 2 \eps e^{\kappa} \|T\|_L + \eta.$
	Since $\eta >0$ was arbitrarily chosen, we have
	\[
	\nu_L^q (T) \leq \nu_L^p (T) + 2 \eps e^{\kappa} \|T\|_L.
	\]
	By using the same argument, we can also show that $\nu_L^p (T) \leq \nu_L^q (T) + 2 \eps e^{\kappa} \|T\|_L$. Therefore, we have 
	\[
	|\nu_L^p (T) - \nu_L^q (T)| \leq 2 \eps e^{\kappa} \|T\|_L, 
	\]
	which proves our claim.
\end{proof}

\begin{Proposition}
	Let $X$ be a Banach space. Then the mapping $p \mapsto n_L^p (X)$ from $\mathcal{E}(X)$ to $\mathbb{R}$ is continuous. Hence the set $\mathcal{N}_L (X)$ is an interval. 
\end{Proposition}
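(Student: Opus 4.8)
The plan is to deduce this from Proposition~\ref{Prop:unifconti} by passing to the infimum over the unit sphere, the one genuine subtlety being that the sphere $S_{\Lip((X,p),(X,p))}$ itself moves as $p$ varies, since the Lipschitz norm depends on $p$. To cope with this I would first record the scale-invariant reformulation
\[
n_L^p(X) = \inf\left\{ \frac{\nu_L^p(T)}{\|T\|_{L,p}} : T \in \Lip(X,X),\ T \neq 0 \right\},
\]
where $\|T\|_{L,p}$ denotes the Lipschitz norm of $T$ computed with respect to $p$; this is immediate because $\Lip(X,X)$ is the same vector space for all equivalent norms and both $T \mapsto \nu_L^p(T)$ and $T \mapsto \|T\|_{L,p}$ are positively homogeneous of degree one in $T$. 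I would also note the a priori bound $n_L^p(X) \le \nu_L^p(\Id) = 1$, which is immediate from the definition of $\Pi_p$.

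To prove continuity at a fixed $q_0 \in \mathcal{E}(X)$, I would take $q_0$ itself as reference norm and work inside the bounded set $G_1 = \{p \in \mathcal{E}(X) : \mu(p,q_0) < 1\}$. For $p,q \in G_1$ with $\mu(p,q) < \delta$, two elementary comparisons are available: first $e^{-2\delta}\|T\|_{L,q} \le \|T\|_{L,p} \le e^{2\delta}\|T\|_{L,q}$, obtained from the sandwich $e^{-\delta}q \le p \le e^{\delta}q$; and second $\|T\|_{L,q_0} \le e^{2}\|T\|_{L,p}$ for $p \in G_1$. Combined with Proposition~\ref{Prop:unifconti}, which for each $\eps>0$ supplies $\delta>0$ with $|\nu_L^p(T)-\nu_L^q(T)| \le 2\eps e\,\|T\|_{L,q_0}$ whenever $\mu(p,q)<\delta$, the reference-norm Lipschitz constant in that bound becomes harmless once $T$ is normalized so that $\|T\|_{L,p}=1$.

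Then I would run the usual infimum comparison: given $\eta>0$, pick $T$ with $\|T\|_{L,p}=1$ and $\nu_L^p(T) < n_L^p(X)+\eta$, use $T/\|T\|_{L,q}$ as a competitor for $n_L^q(X)$, and chain the estimates above to get
\[
n_L^q(X) \le \frac{\nu_L^q(T)}{\|T\|_{L,q}} \le e^{2\delta}\bigl(\nu_L^p(T) + 2\eps e^{3}\bigr) \le e^{2\delta}\bigl(n_L^p(X) + \eta + 2\eps e^{3}\bigr).
\]
Letting $\eta \to 0$, using $n_L^p(X)\le 1$, and symmetrizing in $p$ and $q$ yields
\[
|n_L^p(X) - n_L^q(X)| \le (e^{2\delta}-1) + 2\eps e^{2\delta+3}.
\]
A routine choice of parameters — first fix $\eps$ small relative to the target, then shrink $\delta$ via Proposition~\ref{Prop:unifconti} for that $\eps$ and also so that $e^{2\delta}-1$ is small — shows that $p \mapsto n_L^p(X)$ is continuous at $q_0$, hence on all of $\mathcal{E}(X)$ (in fact uniformly continuous on bounded subsets). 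Finally, $(\mathcal{E}(X),\mu)$ is arcwise connected by \cite[Theorem~1]{RA}, hence connected, so its image $\mathcal{N}_L(X)$ under this continuous map is a connected subset of $\mathbb{R}$, i.e.\ an interval.

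I expect the main obstacle — really the only point requiring care — to be the bookkeeping around the two sources of $p$-dependence: the functional $\nu_L^p$ and the norm $\|\cdot\|_{L,p}$ defining the sphere over which one takes the infimum. The scale-invariant reformulation is precisely what lets Proposition~\ref{Prop:unifconti} be brought to bear, and one must keep the $e^{\kappa}$-type distortion factors under control because the estimate there is stated relative to the fixed reference norm rather than to $p$ itself.
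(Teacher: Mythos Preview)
Your proposal is correct and follows essentially the same approach as the paper: both pass to the scale-invariant quotient $\nu_L^p(T)/\|T\|_{L,p}$, use Proposition~\ref{Prop:unifconti} together with the elementary comparison of Lipschitz norms for nearby $p$, and conclude via the arcwise connectedness of $\mathcal{E}(X)$. The paper compresses your explicit $\eps$--$\delta$ chase by fixing the single sphere $\mathcal{S}=\{T:\|T\|_{p_0}=1\}$ and observing that $\Psi(p,T)=\nu_L^p(T)/\|T\|_p$ is uniformly continuous on $B\times\mathcal{S}$, so that $n_L^p(X)=\inf_{T\in\mathcal{S}}\Psi(p,T)$ is continuous; your version simply unpacks this infimum argument by hand.
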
 

\begin{proof}
	We follow the argument in \cite[Proposition 2]{FMP}. From Proposition \ref{Prop:unifconti}, we deduce that the mapping $(p, T) \in \mathcal{E} (X) \times \Lip (X, X) \mapsto \nu_L^p (T) \in \mathbb{R}$ is uniformly continuous on bounded sets. Let $p_0 \in \mathcal{E} (X)$, $B$ be an open ball centered at $p_0$, and $\mathcal{S} = \{T \in \Lip (X,X) : \|T\|_{p_0} = 1\}$ where $\| \cdot\|_{p_0}$ is understood as the Lipschitz norm on the space $(X, p_0)$.
	Since $\inf \{ \|T\|_p : p \in B, T \in \mathcal{S} \} > 0$, the mapping $\Psi : B \times \mathcal{S} \rightarrow \mathbb{R}$ given by 
	\[
	\Psi (p, T) = \frac{\nu_L^p (T)}{\|T\|_p}
	\]
	is uniformly continuous; hence $p \in \mathcal{E} (X) \mapsto n_L^p (X) = \inf \{ \Psi (p, T) : T \in \mathcal{S}\} \in \mathbb{R}$ is continuous on $B$.
\end{proof}

\subsection{Lipschitz numerical radius attaining maps}\label{subsection:attaining}

Recall that for a Banach space $X$, a bounded linear operator $T \in \Lin (X, X)$ is said to be a \emph{numerical radius attaining operator} when the supremum in the equation (\ref{eq:classicnr}) is attained by an element $(x, x^*)$ such that $x^*(x) = 1$. Numerical radius attaining operators, introduced in \cite{Sims}, have been investigated by many researchers (see, for instance, \cite{AcoAguPay-Carolinae, C2, Paya1,CapMM} and the references therein). In a similar spirit, we now define a {Lipschitz numerical radius attaining} map. A Lipschitz map $T \in \Lip (X,X)$ \emph{attains its Lipschitz numerical radius} when the supremum in the equation (\ref{eq:Lipnr}) is attained by an element $(x, y, x^*) \in \Pi$.

First, we prove that the set of Lipschitz numerical radius attaining maps on $X$ is not dense in $\Lip (X, X)$ for every Banach space $X$. In order to prove this, we need the following lemma. 

\begin{Lemma}\label{Prop:line}
	Let $X$ be a Banach space. If $T \in \Lip (X, X)$ attains its Lipschitz numerical radius at $(x,y, x^*) \in \Pi$, then $T$ attains its Lipschitz numerical radius at $(x,z,(1-\lambda)x^*) \in \Pi $ and at $(z,y, \lambda x^*) \in \Pi$ where $z = \lambda x + (1-\lambda )y$ with $0< \lambda < 1$. 
\end{Lemma}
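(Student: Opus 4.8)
The key observation is that on the segment joining $x$ and $y$, the vector $x-y$ and the "direction" vector $x-z$ (resp. $z-y$) are positive scalar multiples of one another, so the same functional $x^*$ — suitably normalized — lies in the respective duality sets $D$. Concretely, with $z = \lambda x + (1-\lambda)y$ and $0<\lambda<1$ we have $x - z = (1-\lambda)(x-y)$ and $z - y = \lambda(x-y)$. First I would check that $(1-\lambda)x^* \in D(x-z)$ and $\lambda x^* \in D(z-y)$: since $x^* \in D(x-y)$ means $x^*(x-y) = \|x^*\|\,\|x-y\| = \|x-y\|^2$, a direct computation using the positive homogeneity of the norm and of $x^*$ gives $[(1-\lambda)x^*](x-z) = (1-\lambda)^2 x^*(x-y) = (1-\lambda)^2\|x-y\|^2 = \|x-z\|^2$, and likewise $\|(1-\lambda)x^*\| = (1-\lambda)\|x^*\| = (1-\lambda)\|x-y\| = \|x-z\|$. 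Hence $(x,z,(1-\lambda)x^*) \in \Pi$ and $(z,y,\lambda x^*) \in \Pi$. (The hypotheses $x\neq y$ and $0<\lambda<1$ guarantee $x\neq z$ and $z\neq y$.)

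Next I would compute the numerical-range-type quotient at these two states and add them. For the state $(x,z,(1-\lambda)x^*)$:
\[
\frac{\big|[(1-\lambda)x^*](Tx - Tz)\big|}{\|x-z\|^2}
= \frac{(1-\lambda)\,|x^*(Tx - Tz)|}{(1-\lambda)^2\|x-y\|^2}
= \frac{|x^*(Tx - Tz)|}{(1-\lambda)\|x-y\|^2},
\]
and analogously at $(z,y,\lambda x^*)$ the quotient equals $|x^*(Tz-Ty)|/(\lambda\|x-y\|^2)$. The crucial step is to combine these: writing $a = x^*(Tx-Tz)$ and $b = x^*(Tz-Ty)$, we have $a + b = x^*(Tx-Ty)$, and by the triangle inequality
\[
|a+b| \le |a| + |b|.
\]
Since $T$ attains its Lipschitz numerical radius at $(x,y,x^*)$, the left side equals $\nu_L(T)\,\|x-y\|^2$. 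Now I would argue by contradiction: if the quotient at $(x,z,(1-\lambda)x^*)$ were strictly less than $\nu_L(T)$, then $|a| < (1-\lambda)\,\nu_L(T)\,\|x-y\|^2$; combining with $|b| \le \lambda\,\nu_L(T)\,\|x-y\|^2$ (which follows from the definition of $\nu_L(T)$ as a supremum applied to the state $(z,y,\lambda x^*)\in\Pi$) gives $|a+b| < \nu_L(T)\,\|x-y\|^2$, contradicting the attainment. Hence the quotient at $(x,z,(1-\lambda)x^*)$ equals $\nu_L(T)$, i.e. $T$ attains its Lipschitz numerical radius there; the symmetric argument handles $(z,y,\lambda x^*)$.

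I do not expect any serious obstacle here — the proof is essentially a bookkeeping argument built on the scaling identities $x-z=(1-\lambda)(x-y)$, $z-y=\lambda(x-y)$ and the decomposition $Tx-Ty=(Tx-Tz)+(Tz-Ty)$. The only points requiring a little care are: (i) verifying membership in $\Pi$ (in particular that the duality conditions defining $D(\cdot)$ are preserved under the positive scalings, which uses $0<\lambda<1$ so that all scalars are positive); and (ii) making sure the "supremum bound" $|b|\le\lambda\nu_L(T)\|x-y\|^2$ is legitimately invoked, which it is precisely because $(z,y,\lambda x^*)\in\Pi$, so the quotient at that state is at most $\nu_L(T)$ by definition. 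With these in hand the contradiction argument closes immediately, and the same reasoning applied with the roles of the two sub-states interchanged yields attainment at $(z,y,\lambda x^*)$ as well.
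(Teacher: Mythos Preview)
Your proof is correct and follows essentially the same route as the paper: verify that $(1-\lambda)x^*\in D(x-z)$ and $\lambda x^*\in D(z-y)$, then use the decomposition $x^*(Tx-Ty)=x^*(Tx-Tz)+x^*(Tz-Ty)$ together with the bounds coming from the definition of $\nu_L(T)$ to squeeze both sub-quotients to equal $\nu_L(T)$. The paper packages this as a direct chain of (in)equalities forming a convex combination, whereas you phrase it as a contradiction via the triangle inequality $|a+b|\le|a|+|b|$; the underlying mechanism is identical.
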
 

\begin{proof}
	Since $(1-\alpha)x^* \in D(x-z)$ and $\alpha x^* \in D(z-y)$, we have 
	\begin{align*}
		\nu_L (T) = \frac{x^* (Tx- Tz) + x^* (Tz- Ty)}{ \|x-y\|^2} &= \frac{(1-\alpha)x^* (Tx-Tz)}{\|x-z\|^2} + \frac{\alpha x^* (Tz- Ty)}{\|z-y\|^2} \\
		&\leq (1-\alpha) \nu_L (T) + \alpha \nu_L (T) \\
		&= \nu_L (T). 
	\end{align*} 
	Hence, $T$ attains its Lipschitz numerical radius at $(x,z,(1-\lambda)x^*) \in \Pi $ and at $(z,y, \lambda x^*) \in \Pi$.
\end{proof}

\begin{Theorem}
	Let $X$ be a Banach space. Then the set of Lipschitz numerical radius attaining maps is not dense in $\Lip (X, X)$. 
\end{Theorem}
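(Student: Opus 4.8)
The plan is to exhibit, in any nonzero Banach space $X$, a single map $T\in\Lip (X,X)$ together with a radius $\rho>0$ such that no $S\in\Lip (X,X)$ with $\|S-T\|_L<\rho$ attains its Lipschitz numerical radius; this immediately gives the non-density. The building block is a one-dimensional composition: fix $u\in S_X$ and, by Hahn--Banach, $u^*\in S_{X^*}$ with $u^*(u)=1$; fix $\eta\in(0,1)$ and a measurable set $E\subseteq\R$ with $0<|E\cap I|<|I|$ for every nonempty open interval $I$ (such a set is classical). Let $\varphi\colon\R\to\R$ be the Lipschitz function with $\varphi(0)=0$ whose derivative equals $1$ a.e.\ on $E$ and $1-\eta$ a.e.\ on $\R\setminus E$, and put $Tz:=\varphi(u^*(z))u$. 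Then $T\in\Lip (X,X)$ and $\|T\|_L=\|\varphi\|_{\mathrm{Lip}}=1$; moreover, testing $\nu_L(T)$ on the pairs $(tu,su)$ with the functional $(t-s)u^*\in D((t-s)u)$ reduces to the secant slopes of $\varphi$, whose supremum is $\esssup|\varphi'|=1$, so $\nu_L(T)=1$ (not attained by $T$ itself).

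Now suppose $S\in\Lip (X,X)$, $R:=S-T$ with $\|R\|_L<\rho$, attains $\nu_L(S)$ at some $(x,y,x^*)\in\Pi$; write $w:=x-y$. Since $\nu_L(R)\le\|R\|_L$ and $\nu_L$ is subadditive, $\nu_L(S)>1-\rho$, and hence $|x^*(Tx-Ty)|>(1-2\rho)\|w\|^2$. Writing $x^*(Tx-Ty)=\bigl(\varphi(u^*(x))-\varphi(u^*(y))\bigr)x^*(u)$ and using the trivial bounds $|x^*(u)|\le\|w\|$ and $|\varphi(u^*(x))-\varphi(u^*(y))|\le|u^*(w)|\le\|w\|$, we get that both $|x^*(u)|$ and $|u^*(w)|$ exceed $(1-2\rho)\|w\|$; in particular $a:=u^*(x)\ne b:=u^*(y)$ and $\beta:=x^*(u)\ne0$.

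The crucial step is to pull the attainment back onto the segment $[x,y]$. Iterating Lemma \ref{Prop:line}, $S$ attains $\nu_L(S)$ at $(z_{\lambda_1},z_{\lambda_2},(\lambda_1-\lambda_2)x^*)$ for all $0\le\lambda_2<\lambda_1\le1$, where $z_\lambda=\lambda x+(1-\lambda)y$; hence $f(\lambda):=x^*(Sz_\lambda)$ satisfies $|f(\lambda_1)-f(\lambda_2)|=\|w\|^2\nu_L(S)\,|\lambda_1-\lambda_2|$, which forces $f$ to be affine with slope $\pm\|w\|^2\nu_L(S)$. Since $u^*(z_\lambda)=b+\lambda(a-b)$, after the affine substitution $s=b+\lambda(a-b)$ this identity becomes, on the interval $J$ between $b$ and $a$,
\[
\varphi(s)=\frac{c_0+c_1 s}{\beta}-\frac{x^*\bigl(R(z_{\lambda(s)})\bigr)}{\beta},\qquad |c_1|=\frac{\|w\|^2\nu_L(S)}{|a-b|}.
\]
Differentiating in $s$: from $|a-b|\le\|w\|$ and $|\beta|\le\|w\|$ one gets $|c_1/\beta|\ge\nu_L(S)>1-\rho$, while from $|a-b|,|\beta|>(1-2\rho)\|w\|$ and $\|R\|_L<\rho$ one gets that $s\mapsto x^*(R(z_{\lambda(s)}))/\beta$ has Lipschitz constant $<\rho/(1-2\rho)^2$; as $\varphi'>0$ a.e., this forces $\varphi'(s)>(1-\rho)-\rho/(1-2\rho)^2$ for a.e.\ $s\in J$. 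But $\varphi'=1-\eta$ on $\R\setminus E$, which meets the nondegenerate interval $J$ in positive measure, so $1-\eta>(1-\rho)-\rho/(1-2\rho)^2$. Choosing $\rho:=\eta/5$ makes the right-hand inequality fail, a contradiction; hence no such $S$ attains its Lipschitz numerical radius, and $T$ witnesses the non-density.

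The step I expect to be the main obstacle is precisely this reduction: $R=S-T$ is an arbitrary Lipschitz self-map of $X$, not a perturbation supported on a line, so a priori it could compensate for the built-in oscillation of $\varphi'$. Lemma \ref{Prop:line} is what rescues the argument, by collapsing the attainment onto the whole segment $[x,y]$ and forcing $x^*\circ S$ to be genuinely affine there; the near-optimality $|x^*(u)|,|u^*(w)|\approx\|w\|$ then confines the effect of $R$ to a small Lipschitz perturbation on an honest interval, on which $\varphi$ was designed so that no such perturbation can turn it into an affine-with-maximal-slope function. The only further ingredients needing care are the existence of the set $E$ (standard) and the routine bookkeeping that gives $\|T\|_L=\nu_L(T)=1$.
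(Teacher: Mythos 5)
Your argument is correct (one small caveat below) and rests on the same two pillars as the paper's proof: both follow the scheme of \cite[Theorem 2.3]{KMS}, take as witness a map built from a one-variable Lipschitz function with oscillating derivative, and use Lemma \ref{Prop:line} to propagate the attainment of the nearby map $S$ along the entire segment $[x,y]$. The implementations differ, though. The paper's witness is $R=ESP$, where $P$ is a norm-one Lipschitz retraction onto a segment $[0,x_0]\cong[0,1]$ and the middle factor is $t\mapsto\int_0^t\chi_C$ with $C$ nowhere dense, closed and of positive measure; its endgame is soft: attainment within distance $1/2$ forces $|x^*(Rz_1-Rz_2)|>0$ for all distinct $z_1,z_2\in[x,y]$, while $R$ must be constant on some nondegenerate subsegment (or on all of $[x,y]$) because $C$ is nowhere dense. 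Your witness $Tz=\varphi(u^*(z))u$, with $\varphi'\in\{1-\eta,1\}$ and $\{\varphi'=1\}$ of strictly intermediate density in every interval, leads instead to a quantitative contradiction: Lemma \ref{Prop:line} forces $\lambda\mapsto x^*(Sz_\lambda)$ to be affine with slope of modulus $\nu_L(S)\|w\|^2$, so on a nondegenerate interval $\varphi$ equals an affine function of slope greater than $1-\rho$ plus a perturbation of Lipschitz constant less than $\rho/(1-2\rho)^2$, contradicting $\varphi'=1-\eta$ on a set of positive measure; I checked the bookkeeping with $\rho=\eta/5$, including the exclusion of the negative-slope case via $\varphi'>0$, and it is sound. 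The paper's route buys a parameter-free radius $1/2$ and no derivative estimates; yours buys an explicit witness factored through a linear functional and a fully quantitative argument, at the cost of the more delicate set $E$ and the affine-substitution computation. The caveat: $\varphi(u^*(z))$ implicitly assumes real scalars; for complex $X$ replace $u^*$ by $\re u^*$ and note that the forced affine function $\lambda\mapsto x^*(Sz_\lambda)$ is complex-valued, so before taking real parts one should bound $|\im(c_1/\beta)|$ by the same perturbation estimate (take imaginary parts of your identity). This is a routine addition, and the paper's own proof operates under the same implicit convention.
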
 

\begin{proof}
	We follow the argument in \cite[Theorem 2.3]{KMS}. Fix $x_0 \in S_X$ and consider the line segment $[0,x_0] = \{ \lambda x_0 : 0 \leq \lambda \leq 1\}$. We may assume for convenience that $[0,x_0] =[0,1] \hookrightarrow X$. Let $P : X \rightarrow [0,1]$ be a norm one retraction Lipschitz map. Define $S \in \Lip ([0,1], \mathbb{R})$ by 
	\[
	S(t) = \int_0^t \chi_C (\tau) \, d\tau,
	\]
	where $C$ is a nowhere dense closed subset of $[0,1]$ with a positive measure. Then $\|S \|_L = 1$. Put $R:= E S P : X \rightarrow X$ where $E$ is the natural injection map from $[0,1]$ to $X$. We have then $R \in \Lip (X, X)$ and $\|R \|_L = \nu_L(R) =1$. Suppose there exists $T \in \Lip (X,X)$ which attains its Lipschitz numerical radius and satisfies that $\| T - R \|_L < 1/2$. It follows that $\nu_L (T- R) < 1/2$ and $\nu_L (T) \geq \nu_L (R) - \|T-R\|_L > 1/2$. Let us say, $T$ attains its Lipschitz numerical radius at $(x,y,x^*) \in \Pi$. By using Lemma \ref{Prop:line}, we have that 
	\begin{align*} 
		|x^* (Rz_1 - Rz_2)| &\geq |x^* (Tz_1-Tz_2)| - \nu_L (T- R) \|z_1-z_2\|^2 \\
		&=\nu_L (T) \|z_1 - z_2\|^2 - \nu_L (T-R) \|z_1-z_2\|^2 \\
		&> 0
	\end{align*} 
	for all $z_1 \neq z_2 \in [x,y]$. This implies that $R z_1 \neq Rz_2$ for all $z_1 \neq z_2 \in [x,y]$, which is impossible.
\end{proof}

As a generalization of classical Lebesgue's differentiability theorem, it is well-known that if $X$ is a separable Banach space and $Y$ is a Banach space with the RNP, then every Lipschitz map $T$ from $X$ into $Y$ is G\^ateaux differentiable outside an Ar\'onszajn null set (see \cite[Theorem 6.42]{BL}). Our next result, which is motivated by \cite[Proposition 3.2]{WHT}, shows that the Lipschitz numerical radius attainment of $T \in \Lip (X,X)$ is related to the classical numerical radius attainment of its G\^ateaux derivative.
We write $D_T (u)$ for the G\^ateaux derivative of $T$ at $u$, i.e., 
\[
D_T (u) (x) = \lim_{t \rightarrow 0} \frac{T (u + tx) - T(u)}{t} \quad (x \in X). 
\]

\begin{Proposition}
	Let $X$ be a Banach space. If $T \in \Lip (X,X)$ attains its Lipschitz numerical radius at $(x,y, x^*) \in \Pi$ and $T$ is G\^ateaux differentiable on $[x,y]$, then $D_T (u) \in \mathcal{L} (X, X)$ attains its numerical radius at $\left( \frac{x-y}{\|x-y\|}, \frac{x^*}{\|x^*\|}\right)$ for every $u \in [x, y]$.
\end{Proposition}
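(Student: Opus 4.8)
The plan is to show that the numerical radius $\nu(D_T(u))$ is attained at the claimed pair by exploiting the fact that the directional difference quotient of $T$ along the segment $[x,y]$ can only be bounded below by $\nu_L(T)$ after Lemma~\ref{Prop:line} lets us localize the attainment to every subsegment. First I would normalize: write $v = \frac{x-y}{\|x-y\|} \in S_X$ and $\tilde{x}^* = \frac{x^*}{\|x^*\|} \in S_{X^*}$, and note that since $x^* \in D(x-y)$ we have $\tilde{x}^*(v) = 1$, so $\left(v, \tilde{x}^*\right)$ is indeed an admissible state for the classical numerical radius of any operator in $\Lin(X,X)$. Thus it suffices to prove $\big|\tilde{x}^*\big(D_T(u)(v)\big)\big| \geq \nu(D_T(u))$, because the reverse inequality is automatic from the definition~\eqref{eq:classicnr}.

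The key step is to relate $\tilde{x}^*(D_T(u)(v))$ to $\nu_L(T)$. Fix $u \in [x,y]$, say $u = su' $ lying strictly between (the endpoint cases being limits, or handled directly). For small $t > 0$ the points $u + t v$ and $u - tv$ (or $u$ and $u+tv$, chosen to stay inside $[x,y]$) both lie on the segment $[x,y]$, so by Lemma~\ref{Prop:line} the functional $x^*$ — suitably scaled to lie in $D$ of the relevant difference, which here is a positive multiple of $x^*$ since the difference is a positive multiple of $x-y$ — witnesses that $T$ attains $\nu_L(T)$ on that short subsegment. Concretely, for $z_1, z_2 \in [x,y]$ with $z_1 - z_2 = \rho(x-y)$, $\rho > 0$, the element $\rho x^* \in D(z_1 - z_2)$ and
\[
\frac{\big|\rho x^*(Tz_1 - Tz_2)\big|}{\rho^2 \|x-y\|^2} = \nu_L(T),
\]
so $\big|x^*(Tz_1 - Tz_2)\big| = \nu_L(T)\,\rho\,\|x-y\|^2$. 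Applying this with $z_1 = u + tv$, $z_2 = u$ (so $\rho = t/\|x-y\|$), dividing by $t$ and letting $t \to 0$ gives $\big|x^*(D_T(u)(v))\big| = \nu_L(T)\,\|x-y\|$, i.e. $\big|\tilde{x}^*(D_T(u)(v))\big| = \nu_L(T)\,\|x-y\| / \|x^*\| = \nu_L(T)$, using $\|x^*\| = \|x-y\|$.

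It remains to check $\nu(D_T(u)) \leq \nu_L(T)$, so that the value just computed actually equals (hence attains) $\nu(D_T(u))$. This follows because $D_T(u)$ is a G\^ateaux derivative of $T$: for any $(w, w^*) \in S_X \times S_{X^*}$ with $w^*(w) = 1$, one has $w^* \in D(w)$, and evaluating the difference quotient of $T$ at the pair $(u + tw, u)$ — using $tw^* \in D(tw)$ — shows $\big|w^*(D_T(u)(w))\big| = \lim_{t\to 0}\frac{|tw^*(T(u+tw)-Tu)|}{t} \leq \lim_{t\to 0}\frac{\nu_L(T)\,t^2\|w\|^2}{t} \cdot \frac{1}{t}$; more carefully, $|tw^*(T(u+tw)-Tu)| \leq \nu_L(T)\|tw\|^2 = \nu_L(T)t^2$, whence $|w^*(D_T(u)(w))| \leq \nu_L(T)t \to 0$ is the wrong bound, so instead one divides once by $t$: $\big|w^*\big(\tfrac{T(u+tw)-Tu}{t}\big)\big| \leq \nu_L(T)t \to 0$. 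This would force $\nu(D_T(u)) = 0$, which is too strong — so the correct normalization must use that $w^* \in D(tw)$ scales as $\|tw\|$: the functional in $D(tw)$ of norm $\|tw\| = t$ is $tw^*$, and $\frac{|tw^*(T(u+tw)-Tu)|}{\|tw\|^2} \leq \nu_L(T)$ gives $|w^*(T(u+tw)-Tu)| \leq \nu_L(T)t$, then dividing by $t$: $\big|w^*\big(\tfrac{T(u+tw)-Tu}{t}\big)\big| \leq \nu_L(T)$, and letting $t \to 0$ yields $|w^*(D_T(u)(w))| \leq \nu_L(T)$, hence $\nu(D_T(u)) \leq \nu_L(T)$. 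Combined with the previous paragraph, $\nu(D_T(u)) = \nu_L(T) = \big|\tilde{x}^*(D_T(u)(v))\big|$ with $\tilde{x}^*(v) = 1$, which is the desired attainment.

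The main obstacle I anticipate is keeping the normalizations straight between the two quadratic denominators: the Lipschitz numerical radius divides by $\|x-y\|^2$, so to extract the \emph{linear} derivative one must be careful that the functional chosen in $D(\cdot)$ of the small increment $tv$ has norm proportional to $t$, and that after one division by $t$ the limit survives (rather than collapsing to $0$). A secondary, minor point is the endpoint case $u \in \{x,y\}$, where the increment can only be taken on one side; this is handled by the same one-sided difference quotient, or by noting $D_T$ is continuous along the segment when it exists and passing to the limit from interior points.
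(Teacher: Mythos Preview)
Your argument is correct and follows essentially the same strategy as the paper: localize the attainment to arbitrarily short subsegments of $[x,y]$ via Lemma~\ref{Prop:line}, pass to the limit to evaluate $\tilde{x}^*(D_T(u)(v))$, and then bound $\nu(D_T(u))$ from above by $\nu_L(T)$. Your version is in fact more direct than the paper's: the paper introduces a fine partition $\{x_0,\dots,x_N\}$ of $[x,y]$ with $u=x_i$ and runs an $\eps$-$\delta$ argument to get $|\tilde{x}^*(D_T(u)(z_0))|\geq \nu_L(T)-\eps$, whereas you simply observe that $\big|x^*\big(\tfrac{T(u+tv)-Tu}{t}\big)\big|$ is \emph{constant} (equal to $\nu_L(T)\|x-y\|$) for all small $t$, so the limit is exact with no $\eps$ needed. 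For the upper bound $\nu(D_T(u))\leq \nu_L(T)$ the paper cites \cite[Proposition~3.2]{WHT}, while you reprove it inline; your inline proof is fine once the normalization is sorted out.

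Two remarks on presentation. First, the middle paragraph where you compute a wrong bound, declare it ``too strong'', and then redo the computation should be excised in a final write-up; only the corrected version (using $tw^*\in D(tw)$, dividing once by $t$) belongs. Second, your handling of the endpoint $u\in\{x,y\}$ is adequate: just take the one-sided increment $u\pm tv$ that stays in $[x,y]$, since the G\^ateaux derivative is a two-sided limit and either side suffices to identify it.
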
 

\begin{proof}
	Let $u \in [x,y]$ be fixed. Note that 
	\[
	D_T (u) (z_0) = \lim_{t \rightarrow 0} \frac{T(u + tz_0) - T(u)}{t},
	\]
	where $z_0 = (x-y)/\|x-y\|$. Given $\eps >0$, let $\delta >0$ be such that 
	\[
	0<|t| < \delta \Longrightarrow \left\| D_T (u) (z_0) - \frac{T(u+tz_0) - T(u)} {t} \right\| < \eps. 
	\]
	Let $\{ x_1,\ldots, x_{N-1} \} \subset [x,y]$ be a set of finite points such that $x_j = t_jx+(1-t_j)y$ where $1=t_N > t_{N-1} > \ldots > t_1>t_0=0$ with $\|x_j - x_{j+1} \| < \frac{\delta}{2}$ for every $j =0,\ldots, N-1$ and $x_{i} = u$ for some $i \in \{0,\ldots, N\}$. Then 
	\begin{equation}\label{eq:D_T(u)}
		\left\| D_T (u) (z_0) - \frac{T x_{i+1} - Tu }{\| x_{i+1} - u\|} \right\| < \eps. 
	\end{equation} 
	On the other hand, note from Lemma \ref{Prop:line} that 
	\[
	\left| \frac{x^*}{\|x^*\|} \left( \frac{Tx_j - Tx_{j+1}}{\|x_j - x_{j+1}\|} \right) \right| = \nu_L (T)
	\]
	for each $j=0,\ldots, N-1$. In particular, from \eqref{eq:D_T(u)}, we have that 
	\[
	\left| \frac{x^*}{\|x^*\|} \left( D_T (u) (z_0) \right) \right| \geq \left| \frac{x^*}{\|x^*\|} \left( \frac{T x_{i+1} - Tu }{\| x_{i+1} - u\|} \right) \right| -\eps = \nu_L (T) -\eps. 
	\]
	By letting $\eps \rightarrow 0$ and by \cite[Proposition 3.2]{WHT}, 
	\[
	\nu_L (T) \geq \nu (D_T (u) ) \geq \left| \frac{x^*}{\|x^*\|} \left( D_T (u) (z_0) \right) \right| \geq \nu_L (T)
	\]
	which implies that $D_T (u)$ attains its numerical radius at $(z_0, \frac{x^*}{\|x^*\|})$. 
\end{proof}

\subsection{Lipschitz Daugavet centers}\label{subsection:center}

As a Lipschitz analogue of the Daugavet centers, we now introduce the Lipschitz Daugavet centers.

\begin{Definition}
	Let $X$ and $Y$ be Banach spaces. A Lipschitz map $G \in \Lip (X, Y)$ is said to be a \emph{Lipschitz Daugavet center} if $\|G + T \|_L = \|G \|_L + \|T \|_L$ for every rank-one Lipschitz map $T \in \Lip (X, Y)$. 
\end{Definition} 

Our aim is to extend the result \cite[Corollary 3.6]{KMMW} by showing that if $G \in \mathcal{L} (X, Y)$ is a Daugavet center, then $\| G + T \|_L = \|G\| + \|T \|_L$ for every $T \in \Lip (X, Y)$ such that $\overline{\conv} (\slope (T))$ has one of the properties: Radon-Nikod\'ym property, Asplund property, CPCP, or absence of $\ell_1$-sequences. Here the {\it slope} of a Lipschitz map $T$ is defined by 
$$
\slope(T) := \left\{ \dfrac{Tu-Tv}{\|u-v\|} : u,v \in X,\, u \neq v \right\}.
$$
As an immediate corollary, we will obtain that $G \in \Lin (X, Y)$ is a Daugavet center if and only if $G$ is a Lipschitz Daugavet center. To this end, we collect first some generalizations to Daugavet centers of the properties for the identity $\Id_X$ on a Banach space $X$ with the Daugavet property. 

Recall from \cite{KMMW} that for a Banach space $X$, a \emph{Lip-slice} of $S_X$ is a non-empty set of the form 
\[
\left\{ \frac{u -v}{\|u-v\|} : u,v \in X, \, u \neq v, \, \frac{f(u)- f(v)}{\|u-v\|} > \alpha \right\} 
\]
for $f \in \Lip (X, \mathbb{R})$ and $\alpha \in \mathbb{R}$. We shall denote the set 
\[
S_L(f,\eps) = \left\{ \frac{u -v}{\|u-v\|} : u,v \in X,\, u\neq v, \, \frac{f(u)- f(v)}{\|u-v\|} > \|f\|_L - \eps \right\} 
\]
for $f \in \Lip (X, \mathbb{R})$ and $\eps >0$. 

The following result generalizes \cite[Corollary 2.6]{KMMW}. 

\begin{Proposition}\label{prop:center1}
	Let $X$ and $Y$ be Banach spaces, and let $G \in \mathcal{L} (X, Y)$. Then $G$ is a Daugavet center if and only if for any $y \in S_Y$, $f \in \Lip (X, \mathbb{R})$ and $\eps >0$, there is $u \in S(f, \eps)$ such that $\|G u + y \| > 2 -\eps$. 
\end{Proposition}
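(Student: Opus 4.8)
The plan is to characterize Daugavet centers $G \in \mathcal{L}(X,Y)$ via the geometric separation condition on slices, mirroring the classical equivalence for the identity operator on spaces with the Daugavet property. Recall that $G$ is a Daugavet center if and only if $\|G+T\|=\|G\|+\|T\|$ for every rank-one $T$, which by a standard reduction is equivalent to saying: for every $y \in S_Y$, every $x^* \in S_{X^*}$, and every $\eps>0$, there is $x \in B_X$ with $x^*(x)>1-\eps$ and $\|Gx+y\|>2-\eps$ (this is the content of \cite[Corollary 2.6]{KMMW} type statements, proved by testing against $T = x^*(\cdot)y$ and using that $\|G\|=1$ may be assumed). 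So the real task is to upgrade the linear functional $x^*$ appearing there to an arbitrary Lipschitz functional $f \in \Lip(X,\mathbb{R})$, and the linear slice $\{x \in B_X : x^*(x)>1-\eps\}$ to the Lip-slice $S(f,\eps)$.

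First I would prove the easy direction: if the Lip-slice condition holds, then in particular it holds for $f = x^*$ a norm-one linear functional, in which case $S(x^*,\eps) = \{(u-v)/\|u-v\| : x^*((u-v)/\|u-v\|) > 1-\eps\}$, and one recovers a point $w \in S_X$ with $x^*(w)>1-\eps$ and $\|Gw+y\|>2-\eps$; this is precisely the linear slice condition characterizing Daugavet centers, so $G$ is a Daugavet center. For the converse, suppose $G$ is a Daugavet center, and fix $y \in S_Y$, $f \in \Lip(X,\mathbb{R})$ with $\|f\|_L = 1$ (WLOG), and $\eps>0$. By definition of the Lipschitz norm, pick $u_0 \neq v_0$ in $X$ with $(f(u_0)-f(v_0))/\|u_0-v_0\| > 1 - \eps/2$; normalizing, set $w_0 = (u_0-v_0)/\|u_0-v_0\| \in S_X$. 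The point $w_0$ already lies in $S(f,\eps/2)$, but I still need to move within this Lip-slice to a point $w$ additionally satisfying $\|Gw+y\|>2-\eps$. The idea is to apply the \emph{linear} Daugavet-center property at the functional $x^* \in S_{X^*}$ chosen so that $x^*(w_0)$ is close to $1$ (e.g.\ a Hahn--Banach functional norming $w_0$), obtaining $x \in B_X$ with $x^*(x) > 1-\eta$ and $\|Gx+y\| > 2-\eta$ for a small $\eta$ to be fixed; then the segment argument, or a convex-combination/perturbation argument, lets me produce a difference quotient close to $w_0$ witnessing membership in $S(f,\eps)$ while keeping the $\|G\cdot + y\|$ estimate.

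The main obstacle — and the step I expect to require the most care — is the passage from the linear slice condition (which lives in $B_X$ and is controlled by a single functional $x^*$) to the Lip-slice condition (which lives in $S_X$ and is controlled by difference quotients of $f$). The subtlety is that knowing $x^*(x)>1-\eta$ does not immediately give a difference quotient $(f(a)-f(b))/\|a-b\|$ close to $\|f\|_L$; one must instead exploit that near the extreme behaviour of $f$ the relevant directions $(u-v)/\|u-v\|$ fill up a norm-dense-enough portion of a slice, and that $G$ being a Daugavet center forces the geometric ``thickness'' on \emph{every} slice determined by a norm-one functional. Concretely, I would take $x^*$ to be a norming functional for the direction $w_0 = (u_0-v_0)/\|u_0-v_0\|$, apply the linear characterization to get $x$ with $x^*(x)$ near $1$ and $\|Gx+y\|$ near $2$, and then observe that the pair $(u,v) := (x+u_0, x+v_0)$ (or a suitable scaled variant) has difference quotient exactly $w_0$-direction for $f$ up to the chosen error while $x=(x+u_0)-(x+u_0-(u_0-v_0))$, so that translating $f$ along $x$ does not destroy the difference-quotient estimate, and $G$ applied to $(u-v)/\|u-v\|$ relates back to $Gx$ by linearity; choosing $\eta$ small relative to $\eps$ and $\|f\|_L$ then closes the argument. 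I would present this translation/convex-combination bookkeeping carefully but without belaboring the routine norm estimates.
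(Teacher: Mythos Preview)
Your easy direction is fine, but the converse argument has a genuine gap. You choose one direction $w_0=(u_0-v_0)/\|u_0-v_0\|$, a norming functional $x^*$ for $w_0$, and then invoke the linear Daugavet-center characterization to obtain $x\in B_X$ with $x^*(x)>1-\eta$ and $\|Gx+y\|>2-\eta$. The problem is that the linear slice $\{x^*>1-\eta\}$ can be very large, and there is no reason $x$ should be close to $w_0$ or to any direction where $f$ has large slope. Your proposed fix $(u,v)=(x+u_0,x+v_0)$ does not help: the difference quotient $(u-v)/\|u-v\|$ equals $w_0$, not $x$, so the $\|G\cdot+y\|$ bound you obtained for $x$ says nothing about $Gw_0$; and since $f$ is nonlinear, $f(x+u_0)-f(x+v_0)$ is unrelated to $f(u_0)-f(v_0)$, so you also lose the slope estimate. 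There is no bookkeeping that rescues this: controlling a single linear functional is simply not enough to force a point into an arbitrary Lip-slice.

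The paper bypasses this entirely with a global argument. It uses the Bosenko--Kadets characterization of Daugavet centers, namely $B_X=\overline{\conv}\bigl(B_X\setminus G^{-1}(B_Y(-y,2-\eps))\bigr)$, together with \cite[Lemma~2.4]{KMMW}, which says that a Lip-slice of $S_X$ intersecting the closed convex hull of a set must already intersect the set itself. Since $S(f,\eps)$ is nonempty and the ``good'' set has full convex hull, the Lip-slice meets it, and any point in the intersection does the job. The missing idea in your approach is precisely this hereditary behaviour of Lip-slices with respect to convex combinations; without it, one cannot pass from linear slices to Lip-slices.
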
 

\begin{proof}
	It suffices to prove `only if' direction. Suppose that $\|G \| = 1$. Let $y_0 \in S_Y$ and $\eps >0$ be fixed. Note from \cite[Theorem 2.5]{BK} that
	\[
	B_X \subseteq \overline{\conv} (B_X \setminus G^{-1} (B_Y (-y_0, 2-\eps)) ),
	\]
	where $B_Y (-y_0, 2-\eps)$ denotes the closed ball with radius $2-\eps$ centered at $-y_0$ in $Y$. 
	It follows that 
	\[
	S(f, \eps) \cap \overline{\conv} (B_X \setminus G^{-1} (B_Y (-y_0, 2-\eps)) ) \neq \O;
	\]
	hence by \cite[Lemma 2.4]{KMMW}, we get that 
	\[
	S(f, \eps) \cap (B_X \setminus G^{-1} (B_Y (-y_0, 2-\eps)) ) \neq \O.
	\]
	Take $z:=(u-v)/\|u-v\|$ to be an element in this intersection, then $\|G z + y_0\| > 2- \eps$. 
\end{proof} 

Before going further, we clarify some notation. Given a Banach space $X$, let us denote by $K(X^*)$ the intersection of $S_{X^*}$ with the weak-star closure in $X^*$ of the set $\textup{Ext} (B_{X^*})$ of extreme points of $B_{X^*}$. That is, $K(X^*) = S_{X^*} \cap \overline{\textup{Ext} (B_{X^*})}^{w^*}$. We will consider $K(X^*)$ as a topological space equipped with the weak-star topology. Then $K(X^*)$ is a Baire space \cite[Lemma 3.2]{KMMW}. Given Banach spaces $X$ and $Y$, a Lip-slice $S = S(f,\eps)$ of $S_X$, $G \in \mathcal{L} (X, Y)$ with $\|G\|=1$ and $\eps >0$, we consider the set 
\[
D_G (S, \eps) = \{ y^* \in K(Y^*) : \re y^* (Gx) > 1 - \eps \text{ for some } x \in S \}. 
\] 

\begin{Lemma}\label{Lem:D_G}
	Let $X$ and $Y$ be Banach spaces, $S(f, \eps)$ a Lip-slice of $S_X$, $G \in \mathcal{L} (X, Y)$ with $\|G \| = 1$ and $\eps >0$. Then $D_G (S, \eps)$ is an open subset of $K(Y^*)$. If, in addition, $G$ is a Daugavet center, then $D_G (S, \eps)$ is dense in $K(Y^*)$. 
\end{Lemma}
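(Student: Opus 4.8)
The plan is to prove the two assertions separately, following the pattern of the analogous linear result in \cite{KMMW}. First I would establish that $D_G(S,\eps)$ is open in $K(Y^*)$. Fix $y_0^* \in D_G(S,\eps)$; by definition there is $x \in S$ with $\re y_0^*(Gx) > 1 - \eps$. Since the map $y^* \mapsto \re y^*(Gx)$ is weak-star continuous on the (weak-star compact) space $K(Y^*)$, the set $\{ y^* \in K(Y^*) : \re y^*(Gx) > 1 - \eps \}$ is a weak-star open neighborhood of $y_0^*$ contained in $D_G(S,\eps)$. This already exhibits $D_G(S,\eps)$ as a union of weak-star open sets indexed by $x \in S$, hence open; this part is routine.

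For the density statement, suppose $G$ is a Daugavet center with $\|G\| = 1$. I want to show $D_G(S,\eps)$ meets every nonempty weak-star open subset of $K(Y^*)$. It suffices to show it meets every basic neighborhood, and since $K(Y^*) = S_{Y^*} \cap \overline{\textup{Ext}(B_{Y^*})}^{w^*}$, it is enough (by a standard reduction) to show that for each $y_1^* \in \textup{Ext}(B_{Y^*})$ and each weak-star neighborhood $U$ of $y_1^*$, the intersection $D_G(S,\eps) \cap U$ is nonempty; in fact, since extreme points of $B_{Y^*}$ are weak-star denting-like in the sense relevant to Choquet theory, I would instead work directly with a functional $y^* \in K(Y^*)$ and use that extreme points admit slices of $B_{Y^*}$ of arbitrarily small diameter at them, or more simply, exploit that any $y^* \in K(Y^*)$ is a weak-star limit of extreme points and choose $y^* \in \textup{Ext}(B_{Y^*})$ itself. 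Given such $y^*$, pick $y \in S_Y$ with $\re y^*(y)$ close to $1$; then apply Proposition \ref{prop:center1} to obtain $u = (a-b)/\|a-b\| \in S(f,\eps')$ with $\|Gu + y\| > 2 - \eps'$ for a suitably small $\eps' > 0$. From $\|Gu + y\| > 2 - \eps'$ and $\|y\| = 1$ one deduces that $Gu$ and $y$ are nearly aligned, so there is a norming functional $z^* \in S_{Y^*}$ with $z^*(Gu + y) > 2 - \eps'$, forcing both $\re z^*(Gu) > 1 - \eps'$ and $\re z^*(y) > 1 - \eps'$. The point $z^*$ lies in $D_G(S,\eps)$ provided $\eps' \le \eps$, and $z^*$ is close to $y^*$ in the weak-star topology provided one arranges the geometry carefully — this is where the argument needs care.

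The main obstacle I anticipate is precisely this last localization: producing the norming functional $z^*$ so that it lands inside the prescribed weak-star neighborhood $U$ of the given extreme point, rather than merely somewhere on $S_{Y^*}$. The clean way around this is to invoke the structure of $K(Y^*)$ as a Baire space together with the observation (from \cite[Lemma 3.2]{KMMW}) that extreme points are exactly the points where small slices concentrate: one starts from a weak-star slice $W = \{ y^* \in B_{Y^*} : \re y^*(y) > 1 - \eta \}$ of $B_{Y^*}$ determined by a well-chosen $y \in S_Y$ with $W \cap K(Y^*) \subset U$ (possible because $\textup{Ext}(B_{Y^*}) \cap W$ is weak-star dense in $W \cap K(Y^*)$ and slices at extreme points shrink), then feeds this $y$ into Proposition \ref{prop:center1} to get $u \in S$ with $\|Gu + y\| > 2 - \eta$, and finally lets $z^*$ be any Hahn--Banach functional norming $Gu + y$; the inequality $\re z^*(y) > 1 - \eta$ places $z^* \in W$, hence $z^* \in U \cap D_G(S,\eps)$ once $\eta \le \eps$. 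One should double-check that $z^*$ can be taken in $K(Y^*)$ rather than merely in $S_{Y^*}$ — this follows since the set of norming functionals for a fixed vector is a weak-star closed face of $B_{Y^*}$, hence contains an extreme point of $B_{Y^*}$ by Krein--Milman, which automatically lies in $K(Y^*)$.
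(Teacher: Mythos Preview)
Your proposal is correct and follows the same route as the paper: for openness you write $D_G(S,\eps)$ as the union $\bigcup_{x\in S}\{y^*\in K(Y^*):\re y^*(Gx)>1-\eps\}$ of weak-star open sets, exactly as the paper does, and for density the paper simply instructs the reader to repeat the proof of \cite[Lemma~3.2]{KMMW} with Proposition~\ref{prop:center1} substituted for \cite[Corollary~2.6]{KMMW}. Your third paragraph is precisely that argument spelled out---reduce via Choquet's lemma to a weak-star slice $W$ of $B_{Y^*}$ contained in the given open set, apply Proposition~\ref{prop:center1} to the slicing vector $y$ with parameter $\eta\le\eps$ to get $u\in S(f,\eta)\subset S(f,\eps)$ with $\|Gu+y\|>2-\eta$, and pick an extreme norming functional for $Gu+y$, which then lies in $W\cap K(Y^*)\cap D_G(S,\eps)$.
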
 

\begin{proof}
	First, note that $D_G (S, \eps) = \bigcup_{x \in S} D_x$, where $D_x = \{ y^* \in K(Y^*) : \re y^* (Gx) > 1 -\eps\}$; hence $D_G(S,\eps)$ is weak-star open as it is a union of weak-star open sets of $K(Y^*)$. 
	Next, assume that $G$ is a Daugavet center. With the same proof of \cite[Lemma 3.2]{KMMW} but using Proposition \ref{prop:center1} instead of \cite[Corollary 2.6]{KMMW}, we can prove that $D_G (S, \eps)$ is dense in $K(Y^*)$.
\end{proof} 

Recall from \cite{AKMMS} that a bounded subset $C$ of a Banach space $X$ is a \emph{slicely countably determined set} (for short, SCD-set) if there is a \emph{determining sequence} $\{S_n\}$ of slices of $C$, that is, $\{S_n\}$ satisfies that $C \subseteq \overline{\conv} (B)$ whenever $B \subseteq C$ intersects all the $S_n$'s. 
The following result generalizes \cite[Theorem 3.4]{KMMW} while the proof is based on the same idea.

\begin{Theorem}\label{Thm:SCD}
	Let $X$ and $Y$ be Banach spaces and $G \in \mathcal{L} (X, Y)$. If $G$ is a Daugavet center, then $\|G + T \|_L = \|G \| + \|T \|_L$ for every $T \in \Lip (X, Y)$ such that $\slope (T)$ is an SCD-set. 
\end{Theorem}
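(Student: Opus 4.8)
The plan is to mimic the structure of the proof of \cite[Theorem 3.4]{KMMW}, replacing the identity operator by the Daugavet center $G$ and using the SCD machinery on the \emph{slope} of $T$ rather than on $T$ itself. Fix $T \in \Lip(X,Y)$ with $\slope(T)$ an SCD-set; by homogeneity we may assume $\|G\| = \|T\|_L = 1$, and it suffices to produce, for every $\eps > 0$, pairs $u \neq v$ in $X$ with
\[
\frac{\|(G+T)u - (G+T)v\|}{\|u - v\|} > 2 - \eps.
\]
Writing $z = (u-v)/\|u-v\|$ and $w = (Tu - Tv)/\|u-v\| \in \slope(T)$, the target inequality reads $\|Gz + w\| > 2 - \eps$, so the game is to find a point of $\slope(T)$ that is "almost norm-attained in the direction $-Gz$" for a suitable unit direction $z$ realized by the same pair $(u,v)$.

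The key steps, in order: (1) Let $\{S_n\}$ be a determining sequence of slices of $C := \slope(T)$. Each slice $S_n$ is cut out by some $y_n^* \in K(Y^*)$ (one may assume the slicing functionals lie in $K(Y^*)$ after a standard reduction, since extreme points of $B_{Y^{**}}$ / the Choquet-type description of slices suffices here, exactly as in \cite{KMMW}). (2) For each $n$, apply Lemma \ref{Lem:D_G}: since $G$ is a Daugavet center, $D_G(S_n, \eps_n)$ is a dense open subset of the Baire space $K(Y^*)$ for any $\eps_n > 0$; intersecting countably many of these (with $\eps_n \downarrow 0$ chosen appropriately, together with the dense open sets that force the slicing functionals $y_n^*$ to be genuinely approached) yields a single $y^* \in K(Y^*)$ lying in all of them. (3) Unwinding membership in $\bigcap_n D_G(S_n,\eps_n)$: for each $n$ there is a point of the Lip-slice of $S_X$ associated to $S_n$ — i.e. a pair $(u_n, v_n)$ with $z_n := (u_n - v_n)/\|u_n - v_n\|$ — such that simultaneously $\re y^*(G z_n)$ is close to $1$ and the same pair makes $(Tu_n - Tv_n)/\|u_n-v_n\|$ lie in $S_n$, hence has $y^*$-value close to $1$ as well; then $\|G z_n + w_n\| \ge \re y^*(G z_n + w_n) > 2 - \eps$, which is what we want. (4) Finally, invoke the defining property of the determining sequence: the set $B$ of those $w \in C$ that are selected at some stage "intersects all the $S_n$", so if no pair gave $\|Gz+w\| > 2-\eps$ we would be able to push $C \subseteq \overline{\conv}(B')$ for a bad $B'$ and derive a contradiction with $\|T\|_L = 1$ via a Bunt--Klee / \cite[Theorem 2.5]{BK}-type convexity argument — the precise bookkeeping mirrors \cite[Theorem 3.4]{KMMW} line by line.

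The main obstacle I expect is the coupling in step (3): in \cite{KMMW} the element $z$ witnessing the Lip-slice of $S_X$ and the element of $\slope(\Id) $ coincide tautologically, whereas here $z_n = (u_n-v_n)/\|u_n-v_n\|$ and $w_n = (Tu_n - Tv_n)/\|u_n - v_n\|$ are \emph{different} vectors produced by the \emph{same} pair $(u_n,v_n)$, and one must be careful that the slice $S_n$ of $\slope(T)$ and the Lip-slice of $S_X$ appearing in Lemma \ref{Lem:D_G} (which is defined via some $f \in \Lip(X,\mathbb{R})$ and $\eps$) can be matched up — i.e. that membership of $w_n$ in $S_n$ can be arranged through a pair $(u_n,v_n)$ that also lies in the prescribed Lip-slice. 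This is exactly where Proposition \ref{prop:center1} and \cite[Lemma 2.4]{KMMW} do the work: the Lip-slice condition on pairs in $X$ is flexible enough (it only constrains $(u-v)/\|u-v\|$ through a Lipschitz functional) that one can thread the same pair through both constraints. Once that matching is set up correctly, the Baire-category extraction and the final contradiction are routine adaptations of \cite{KMMW}.
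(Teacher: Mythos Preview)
Your overall architecture is the same as the paper's, and the ``main obstacle'' paragraph correctly identifies the crucial coupling: each slice $S_n = \{w \in \slope(T): \re y_n^*(w) > 1-\eps_n\}$ of $\slope(T)$ is converted into the Lip-slice $\tilde S_n = \bigl\{(u-v)/\|u-v\| : (\re y_n^*\circ T)(u) - (\re y_n^*\circ T)(v) > (1-\eps_n)\|u-v\|\bigr\}$ of $S_X$, and it is $D_G(\tilde S_n,\eps)$ to which Lemma~\ref{Lem:D_G} applies. So far so good.

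The gap is in your step~(3) and the way you use step~(4). You write that $w_n = (Tu_n - Tv_n)/\|u_n - v_n\| \in S_n$ ``hence has $y^*$-value close to $1$ as well''. That is false at this stage: membership in $S_n$ only says $\re y_n^*(w_n) > 1-\eps_n$, and $y^*$ is a \emph{different} functional. All that $y^* \in \bigcap_n D_G(\tilde S_n,\eps)$ gives you directly is, for each $n$, a pair $(u_n,v_n)$ with $\re y^*(G z_n) > 1-\eps$ and $w_n \in S_n$; it says nothing about $\re y^*(w_n)$.

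What you are missing is the role of an \emph{initial} witness. Before intersecting the $D_G(\tilde S_n,\eps)$, fix $u\neq v$ with $\|Tu - Tv\| > (1-\eps)\|u-v\|$. The set $A = \bigcap_n D_G(\tilde S_n,\eps)$ is dense in $K(Y^*)$, so you may choose $y^* \in A$ with the \emph{additional} property $\re y^*\bigl((Tu-Tv)/\|u-v\|\bigr) > 1-\eps$. Now extract the $(u_n,v_n)$ as above; since $\{w_n\}$ meets every $S_n$, the SCD property yields $\slope(T) \subseteq \overline{\conv}\{w_n\}$, in particular $(Tu-Tv)/\|u-v\|$ lies there. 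Linearity of $\re y^*$ on a convex combination approximating this point then forces $\re y^*(w_\ell) > 1-\eps$ for some $\ell$, and combining with $\re y^*(Gz_\ell) > 1-\eps$ finishes. This is a direct argument, not the contradiction with $\|T\|_L = 1$ you sketch in step~(4).
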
 

\begin{proof} 
Without loss of generality, let us assume that $\|G \| =1$ and consider the case of $\|T \|_L = 1$. For $\eps >0$, let $u \neq v \in X$ such that 
	\[
	{\|T u - Tv \|}> ( 1 - \eps){\|u-v\|} 
	\]
	Take a sequence $\{S_n\}$ of slices of $\slope (T)$ which is determining and write 
	\[
	S_n = \{ z \in \slope (T) : \re x_n^* (z) > 1 -\eps_n \}
	\]
	with $\eps_n >0$ and $x_n^* \in X^*$ such that $\sup \re x_n^* (\slope (T)) = 1$. For each $n \in \N$, let us denote by $\tilde{S}_n$ the set 
	\[
	\left \{ \frac{u-v}{\|u-v\|} : u \neq v, \, \frac{ \re x_n^* (Tu) - \re x_n^* (Tv) }{\|u-v\|} > 1 - \eps_n \right\}. 
	\] 
	By Lemma \ref{Lem:D_G} and the Baire category theorem, we obtain that the set 
	$A = \bigcap_{n \in \N} D_G (\tilde{S}_n , \eps)$ is dense in $K(Y^*)$. Therefore, there exists $y^* \in A$ such that 
	\[
	\re y^* \left( \frac{Tu - Tv}{\|u-v\|} \right) > 1-\eps. 
	\]
	Since $y^* \in A$, for each $n \in \N$, there exist $u_n \neq v_n$ in $X$ such that 
	\[
	\re y^* \left( \frac{G u_n - G v_n}{\|u_n - v_n\|} \right) > 1- \eps \quad \text{and} \quad \frac{ Tu_n - Tv_n }{\|u_n -v_n \|} \in S_n.
	\]
	Since $\{S_n\}$ is determining for $\slope (T)$, we then have 
	\[
	\slope (T) \subseteq \overline{\conv} \left\{ \frac{ Tu_n - Tv_n }{\|u_n -v_n \|} : n \in \N \right\}. 
	\]
	By a standard convexity argument as in the proof of \cite[Theorem 3.4]{KMMW}, there exists $\ell \in \N$ such that 
	\[
	\re y^* \left( \frac{Tu_\ell - Tv_\ell}{\|u_\ell - v_\ell\|} \right) > 1 - \eps.
	\]
	Finally, 
	\begin{align*}
		\|G + T \|_L &\geq \re y^* \left( \frac{G u_\ell - G v_\ell}{\| u_\ell - v_\ell \|} + \frac{ Tu_\ell - T v_\ell}{\|u_\ell - v_\ell\|} \right) > 2-2\eps. 
	\end{align*} 
\end{proof} 

By definition, if $\slope (T)$ is an SCD-set for $T \in \Lip (X, Y)$, then $T$ is forced to have a separable image. Nevertheless, the following result will be a tool to get rid of this separability restriction in many cases. The proof is similar to the proof of \cite[Lemma 3.5]{KMMW}, but slightly different. We include the proof for the sake of completeness. 

\begin{Proposition}\label{Prop:sep_det}
	Let $X$ and $Y$ be Banach spaces, $G \in \mathcal{L} (X, Y)$ and $T \in \Lip (X, Y)$. Then there exist separable subspaces $E$ and $F$ of $X$ and $Y$, respectively, such that $T \vert_E \in \Lip (E, F)$ and $G \vert_E \in \Lin (E, F)$ satisfy $\|T \vert_E\|_L = \|T\|_L$, $\| G\vert_E \| = \|G\|$ and $G \vert_E$ is a Daugavet center. 
\end{Proposition}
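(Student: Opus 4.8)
The plan is to obtain $E$ and $F$ as closures of increasing unions $\bigcup_n E_n$ and $\bigcup_n F_n$ of separable subspaces, built by a back-and-forth construction in which each stage records countably many witnesses. Normalize so that $\|G\|=1$ (scaling $G$ changes neither the property of being a Daugavet center nor the required norm equalities, and the case $G=0$ is trivial). First I would fix a sequence $(u_k,v_k)$ in $X$ with $u_k\neq v_k$ and $\|Tu_k-Tv_k\|/\|u_k-v_k\|\to\|T\|_L$, together with a sequence $(w_k)$ in $S_X$ with $\|Gw_k\|\to\|G\|$; letting $E_0$ be any separable subspace of $X$ containing all these points and $F_0$ a separable subspace of $Y$ with $G(E_0)\cup T(E_0)\subseteq F_0$, we already have $\|T\vert_{E_0}\|_L=\|T\|_L$ and $\|G\vert_{E_0}\|=\|G\|$, and these equalities pass to every subspace between $E_0$ and $X$.

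For the recursive step I would use the geometric description of Daugavet centers from \cite[Theorem 2.5]{BK}: since $G$ is a Daugavet center with $\|G\|=1$, for every $x_0\in S_X$, $y_0\in S_Y$ and $\eps>0$ one has $x_0\in\overline{\conv}\{z\in B_X:\|Gz+y_0\|>2-\eps\}$, hence there are $z_1,\dots,z_m\in B_X$ with $\|Gz_i+y_0\|>2-\eps$ for all $i$ and scalars $\lambda_i\ge 0$, $\sum_i\lambda_i=1$, with $\|\sum_i\lambda_i z_i-x_0\|<\eps$. Now suppose $E_n$ and $F_n$ have been constructed, and fix countable dense sets $C_n\subseteq S_{E_n}$ and $D_n\subseteq S_{F_n}$. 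Applying the above to every triple $(x_0,y_0,1/j)$ with $x_0\in C_0\cup\dots\cup C_n$, $y_0\in D_0\cup\dots\cup D_n$ and $1\le j\le n$, I collect all the resulting vectors $z_i$; this is a countable set, so $E_{n+1}$ can be taken to be the closed linear span of $E_n$ together with this set, and then $F_{n+1}$ a separable subspace of $Y$ containing $F_n\cup G(E_{n+1})\cup T(E_{n+1})$. Setting $E=\overline{\bigcup_n E_n}$ and $F=\overline{\bigcup_n F_n}$ yields separable subspaces with $G(E)\subseteq F$ and $T(E)\subseteq F$ by continuity, so $G\vert_E\in\Lin(E,F)$, $T\vert_E\in\Lip(E,F)$, and the norm equalities are inherited from $E_0$.

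It remains to verify that $G\vert_E$ is a Daugavet center, for which I would apply the same characterization to $G\vert_E:E\to F$, using that for $A\subseteq E$ the closed convex hull of $A$ taken in $E$ coincides with the one taken in $X$ (as $E$ is a closed subspace). So fix $y_0\in S_F$, $x_0\in S_E$ and $\eps>0$. Since $\bigcup_n E_n$ is dense in $E$ and $\bigcup_n F_n$ is dense in $F$, after normalizing one finds an index $n$, points $x_0'\in C_n$, $y_0'\in D_n$ with $\|x_0-x_0'\|$ and $\|y_0-y_0'\|$ as small as desired, and a rational $1/j<\eps$; the stage-$n$ construction then supplies $z_1,\dots,z_m\in B_{E_{n+1}}\subseteq B_E$ with $\|Gz_i+y_0'\|>2-1/j$ and a convex combination $\sum_i\lambda_i z_i$ with $\|\sum_i\lambda_i z_i-x_0'\|<1/j$. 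Because $\|G\|=1$, passing from $y_0',x_0'$ to $y_0,x_0$ perturbs the relevant quantities by controlled amounts, so for a suitable choice of approximations we obtain $\|Gz_i+y_0\|>2-\eps$ and $\|\sum_i\lambda_i z_i-x_0\|<\eps$; hence $x_0\in\overline{\conv}\{z\in B_E:\|Gz+y_0\|>2-\eps\}$. As $x_0\in S_E$, $y_0\in S_F$, $\eps>0$ were arbitrary and $B_E=\overline{\conv}(S_E)$, this is exactly the condition for $G\vert_E$ to be a Daugavet center.

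The routine parts are checking that each $E_{n+1}$ and $F_{n+1}$ remains separable (only countably many vectors are adjoined at each stage) and that the final spaces meet all stated requirements. The step that needs the most care --- and where the argument differs slightly from \cite[Lemma 3.5]{KMMW} --- is the bookkeeping in the last paragraph: one must verify that $\bigcup_n C_n$ is dense in $S_E$ and $\bigcup_n D_n$ is dense in $S_F$ (which requires a normalization argument, since a priori the spheres $S_{F_n}$ only approximate $S_F$ in norm), and that every triple drawn from these countable families is genuinely processed at some finite stage, so that the strict inequalities $\|Gz_i+y_0\|>2-\eps$ survive the passage to the limit.
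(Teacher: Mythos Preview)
Your argument is correct and follows the same general back-and-forth scheme as the paper, but there is one genuine difference worth noting. The paper does not build the witnesses from \cite[Theorem~2.5]{BK} by hand at each stage; instead it invokes Ivashyna's separable-determination theorem \cite[Theorem~1]{I} as a black box: given any separable $X_1\subseteq X$ and $Y_1\subseteq Y$, that result produces separable $E_1\supseteq X_1$ and $F_1\supseteq Y_1$ with $G(E_1)\subseteq F_1$ and $G\vert_{E_1}:E_1\to F_1$ already a Daugavet center. The iteration then only has to interleave this with the requirement $T(E_n)\subseteq F_{n+1}$, and a final appeal to \cite[Theorem~2.1]{BK} checks that the union $E=\bigcup_n E_n$, $F=\bigcup_n F_n$ still works. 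Your approach, by contrast, re-engineers the content of Ivashyna's theorem inside the construction, recording at every stage the finitely many $z_i$'s that witness the geometric characterization for a countable collection of triples; the density and bookkeeping verifications you flag in the last paragraph are exactly what this costs. The paper's route is shorter because it outsources the hard step, while yours is more self-contained and makes transparent \emph{why} the restriction remains a Daugavet center. Both tacitly assume that $G$ itself is a Daugavet center---a hypothesis the stated proposition omits but the surrounding text and both proofs clearly require.
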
 

\begin{proof}
	Let $\{x_k\}$ and $\{x_k'\}$ be sequences in $X$ such that $\| T \vert_{\overline{\operatorname{span}}\{x_k : k\in\N\} } \|_L = \|T \|_L$ and $\| G \vert_{\overline{\operatorname{span}}\{x_k':k\in \N\}} \| = \|G \|$, respectively. Let $X_1 := \overline{\operatorname{span}} \{ x_k, x_k' : k \in \N \}$ and $Y_1 := \overline{T(X_1)}$. Then $X_1$ and $Y_1$ are separable subspaces of $X$ and $Y$, respectively. By \cite[Theorem 1]{I}, there exist separable subspaces $E_1$ and $F_1$ of $X$ and $Y$, respectively, such that $X_1 \subseteq E_1$, $Y_1 \subseteq F_1$, $G(E_1) \subseteq F_1$ and $G \vert_{E_1} : E_1 \rightarrow F_1$ is a Daugavet center. Let $Y_2 := \overline{\operatorname{span}} \{ F_1 \cup T(E_1) \}$. Again, by \cite[Theorem 1]{I}, there exist separable subspaces $E_2$ and $F_2$ of $X$ and $Y$, respectively, such that $E_1 \subseteq E_2$, $Y_2 \subseteq F_2$, $G(E_2) \subseteq F_2$ and $G \vert_{E_2} : E_2 \rightarrow F_2$ is a Daugavet center. Let $Y_3:= \overline{\operatorname{span}} \{ F_2 \cup T(E_2)\}$. In this fashion, we can define a sequence $\{E_n\}$ and $\{F_n\}$ of separable subspaces of $X$ and $Y$, respectively, so that 
	\[
	X_1 \subseteq E_1 \subseteq E_2 \subseteq \cdots \subseteq E_n \subseteq \cdots 
	\]
	and
	\[
	Y_1 \subseteq F_1 \subseteq Y_2 \subseteq F_2 \subseteq \cdots \subseteq Y_n \subseteq F_n \subseteq \cdots. 
	\]
	Put $E := \cup_n E_n$ and $F := \cup_n F_n$. Then $T \vert_E \in \Lip (E, F)$ satisfies that $\|T \vert_E \|_L = \|T \|_L$. Moreover, $G(E) \subseteq F$ and $\| G \vert_E \| = \| G \|$. By using \cite[Theorem 2.1]{BK}, it can be checked that $G \vert_E : E \rightarrow F$ is a Daugavet center. 
\end{proof}

Now, we are ready to present the promised result which generalizes \cite[Corollary 3.6]{KMMW}. 

\begin{corollary}
	Let $X$ and $Y$ be Banach spaces and $G \in \mathcal{L} (X, Y)$. If $G$ is a Daugavet center, then $\|G + T \|_L = \|G \| + \|T \|_L$ for every $T \in \Lip (X, Y)$ such that $\overline{\conv} (\slope (T))$ has one of the following properties: Radon-Nikod\'ym property, Asplund property, CPCP, or absence of $\ell_1$-sequences. 
\end{corollary}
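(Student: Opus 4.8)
The plan is to assemble the corollary from three ingredients already available: Theorem \ref{Thm:SCD} (the SCD version of the result), the separable reduction of Proposition \ref{Prop:sep_det}, and the structure theory of SCD sets from \cite{AKMMS}. First I would normalize: we may assume $\|G\| = 1$, and, discarding the trivial case $T = 0$, also $\|T\|_L = 1$ (rescaling $T$ by a positive constant changes $\slope(T)$ only by the same scaling, which preserves each of the four hypotheses). Since $\|G + T\|_L \le \|G\| + \|T\|_L = 2$ is automatic, it suffices to prove $\|G + T\|_L \ge 2$.

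Next I would invoke Proposition \ref{Prop:sep_det} to pass to separable subspaces $E \subseteq X$ and $F \subseteq Y$ with $\|T\vert_E\|_L = 1$, $\|G\vert_E\| = 1$, and $G\vert_E \in \Lin(E,F)$ a Daugavet center. Because $\slope(T\vert_E) \subseteq \slope(T)$ and $F$ is a closed subspace of $Y$ (so closed convex hulls computed in $F$ agree with those computed in $Y$), we get $\overline{\conv}(\slope(T\vert_E)) \subseteq \overline{\conv}(\slope(T))$, and this is now a \emph{separable} bounded convex set. I would then check that the hypothesis passes to it: the Radon--Nikod\'ym property, the CPCP, and the absence of $\ell_1$-sequences are all inherited by closed convex subsets; and if $\overline{\conv}(\slope(T))$ has the Asplund property (its closed linear span is an Asplund space), then it contains no isomorphic copy of $\ell_1$, hence no $\ell_1$-sequence, which reduces this case to the last one. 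Thus in every case $\overline{\conv}(\slope(T\vert_E))$ is a separable bounded convex set with the RNP, the CPCP, or without $\ell_1$-sequences.

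Now I would apply \cite{AKMMS}: a separable bounded convex subset of a Banach space enjoying any one of those three properties is an SCD set. From this I would deduce that $\slope(T\vert_E)$ itself is an SCD set: a determining sequence $\{S_n\}$ of slices of $\overline{\conv}(\slope(T\vert_E))$ restricts to a determining sequence of slices of $\slope(T\vert_E)$, since the functional suprema over a set and over its closed convex hull coincide, and if $B \subseteq \slope(T\vert_E)$ meets all the restricted slices then $B$ meets all the $S_n$, whence $\overline{\conv}(\slope(T\vert_E)) \subseteq \overline{\conv}(B)$ and in particular $\slope(T\vert_E) \subseteq \overline{\conv}(B)$. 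Finally, applying Theorem \ref{Thm:SCD} to the Daugavet center $G\vert_E \colon E \to F$ and to $T\vert_E$, whose slope set is SCD, yields $\|G\vert_E + T\vert_E\|_L = \|G\vert_E\| + \|T\vert_E\|_L = 2$, and therefore $\|G + T\|_L \ge \|(G+T)\vert_E\|_L = \|G\vert_E + T\vert_E\|_L = 2$.

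The only genuinely delicate point is the bookkeeping in the reductions: verifying that Proposition \ref{Prop:sep_det} really delivers $G\vert_E$ as a Daugavet center together with the two norm equalities, that $\overline{\conv}(\slope(T\vert_E))$ (computed in $F$, equivalently in $Y$) inherits the property in question, that the Asplund hypothesis collapses to the $\ell_1$-free hypothesis, and that "$\overline{\conv}(\slope(T))$ SCD $\Rightarrow$ $\slope(T)$ SCD" — which is what Theorem \ref{Thm:SCD} actually needs as input. I expect no new estimate beyond Theorem \ref{Thm:SCD}; the work is in confirming that each of the four hypotheses (two of which are set-properties, one a space-property, one ambiguous until disambiguated as above) descends to the separable slope set and then feeds correctly into the SCD machinery.
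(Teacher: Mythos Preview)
Your proposal is correct and follows essentially the same route as the paper: reduce to separable subspaces via Proposition~\ref{Prop:sep_det}, observe that $\overline{\conv}(\slope(T\vert_E))$ inherits the relevant property and is therefore SCD by \cite{AKMMS}, pass SCD down to $\slope(T\vert_E)$ (the paper cites \cite[Lemma~3.1]{KMMW} for this, while you spell out the argument), and conclude via Theorem~\ref{Thm:SCD}. The only cosmetic difference is that you handle the Asplund case by reducing it to the absence of $\ell_1$-sequences, whereas the paper treats all four properties uniformly.
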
 

\begin{proof}
	Let us take separable subspaces $E$ and $F$ from Proposition \ref{Prop:sep_det}. Observe that $\overline{\conv} (\slope (T \vert_E))$ also enjoys one of the properties listed above; hence $\overline{\conv} (\slope (T \vert_E))$ is an SCD-set since it is separable. By \cite[Lemma 3.1]{KMMW}, $\slope (T \vert_E)$ turns to be an SCD-set. It follows from Theorem \ref{Thm:SCD} that
	\[
	\| G + T \|_L \geq \|G \vert_E + T \vert_E \|_L = \|G \vert_E \| + \|T \vert_E \|_L = \|G\| + \|T\|_L. 
	\]
\end{proof}

We finish this section by mentioning that the lack of linearity induces a difficulty in estimating the lower bound of the Lipschitz numerical index of a Banach space with so called the property $\beta$. For the case of classical numerical index, see \cite[Proposition 5]{FMP}.

\begin{question}
Suppose that  $X$ is a Banach space with the property $\beta$. Is it possible to estimate the Lipschitz numerical index of $X$ in terms of the constant involved in the definition of the property $\beta$? 
\end{question}

\section{Lipschitz numerical index of vector-valued function spaces}\label{section:function-algebra}

In this section, we present the stability results of the Lipschitz numerical index of vector-valued function spaces. Recall that a {\it function algebra} $A(\Omega)$ is a closed subspace of $C_b(\Omega)$ that separates points of $\Omega$ and contains constant functions. When $\Omega$ is a compact Hausdorff space, the function algebra $A(\Omega)$ is known as the uniform algebra. We will use the notation $K$ instead of $\Omega$ in this case.

Let us emphasize that all Banach spaces are assumed to be complex in the present section. As a vector-valued analogue of a function algebra, we denote by $A(\Omega, X)$ a closed subspace of the space $C_b(\Omega, X)$ of $X$-valued bounded continuous functions that satisfies the following properties:

\begin{enumerate}[\rm(i)]
	\setlength\itemsep{0.3em}
	\item[\textup{(i)}] The base algebra $A = \{x^* \circ f: f \in A(\Omega, X),\, x^* \in X^* \}$ is a function algebra.
	\item[\textup{(ii)}] $A\otimes X \subset A(\Omega, X)$.
	\item[\textup{(iii)}] For every $f \in A$ and $g \in A(\Omega, X)$, $fg \in A(\Omega, X)$.
\end{enumerate}

In order to compare the Lipschitz numerical index of $A(\Omega, X)$ to that of $X$, we apply the following lemmata from \cite{WHT} which were used to show the stability of the Lipschitz numerical index of $C(K, X)$. 

\begin{Lemma}\cite[Corollary 2.2]{WHT}\label{lem:radiusattain}
	Let $Q \subset \Pi$ be such that $\pi(Q)$ is dense in $X \times X$. Then for each $T \in \Lip(X, X)$, 
	\[
	\nu_L(T) = \sup\left\{\frac{|x^*(Tx - Ty)|}{\|x-y\|^2}: (x,y,x^*) \in Q\right\}.
	\]
	Moreover, $\nu_L(T)$ may be determined by choosing one functional $x^* \in D(x-y)$ at each point $(x,y)$ of a dense subset of $X \times X$.
\end{Lemma}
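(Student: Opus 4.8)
The plan. The inequality $\sup\{|x^*(Tx-Ty)|/\|x-y\|^2:(x,y,x^*)\in Q\}\le\nu_L(T)$ is immediate because $Q\subset\Pi$, so the whole content lies in the reverse inequality. Assuming $\nu_L(T)>0$ (otherwise there is nothing to prove), fix a small $\eps>0$ and a triple $(x_0,y_0,x_0^*)\in\Pi$ with $|x_0^*(Tx_0-Ty_0)|/\|x_0-y_0\|^2>\nu_L(T)-\eps$. Set $z_0=(x_0-y_0)/\|x_0-y_0\|\in S_X$, $z_0^*=x_0^*/\|x_0-y_0\|$, which lies in $F(z_0):=D(z_0)\cap S_{X^*}$, and $\sigma=(Tx_0-Ty_0)/\|x_0-y_0\|\in X$; then $\psi_0:=|z_0^*(\sigma)|>\nu_L(T)-\eps$. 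The naive attempt — take $(u,v)\in\pi(Q)$ near $(x_0,y_0)$ and estimate with the functional $u^*\in D(u-v)$ that $Q$ happens to supply — fails: $D(u-v)$ can be a large face, $u^*$ need bear no resemblance to $z_0^*$, and a weak-$*$ limit of the normalised such functionals only produces \emph{some} member of $F(z_0)$, possibly evaluating $\sigma$ far below $\psi_0$. So the real work is to first relocate to a direction at which \emph{every} admissible norming functional is forced to see a value close to $\psi_0$.

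To do this I would argue as follows. The set $B:=\{w^*\in F(z_0):|w^*(\sigma)|\le\psi_0-2\eps\}$ is weak-$*$ compact and convex and omits $z_0^*$, so Hahn--Banach yields $\eta\in X$ with $\re z_0^*(\eta)>\sup_{w^*\in B}\re w^*(\eta)$ (take $\eta=0$ if $B=\O$). Put $m:=\max_{w^*\in F(z_0)}\re w^*(\eta)$ and let $M_\eta:=\{w^*\in F(z_0):\re w^*(\eta)=m\}$ be the corresponding weak-$*$ compact face; since $M_\eta\cap B=\O$, every $w^*\in M_\eta$ satisfies $|w^*(\sigma)|>\psi_0-2\eps$. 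For small $\tau>0$ set $z_\tau:=(z_0+\tau\eta)/\|z_0+\tau\eta\|\in S_X$, so $z_\tau\to z_0$ as $\tau\to0^+$. The crucial step is a \emph{shrinking lemma}: for every weak-$*$ open $U\supset M_\eta$ there is $\tau_0>0$ with $F(z_\tau)\subset U$ whenever $0<\tau<\tau_0$. This is a weak-$*$ compactness argument: if $w_k^*\in F(z_{\tau_k})$ with $\tau_k\downarrow0$, then $w_k^*(z_0+\tau_k\eta)=\|z_0+\tau_k\eta\|\ge1+\tau_k m$, whence $\re w_k^*(z_0)\to1$ (so $w_k^*(z_0)\to1$) and $\re w_k^*(\eta)\ge m$; hence every weak-$*$ cluster point of $(w_k^*)$ lies in $F(z_0)$ and maximises $\re(\cdot)(\eta)$ there, i.e.\ belongs to $M_\eta\subset U$, contradicting $w_k^*\notin U$.

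Now I would assemble everything. Since $U:=\{w^*\in X^*:|w^*(\sigma)|>\psi_0-2\eps\}$ is weak-$*$ open and contains $M_\eta$, the shrinking lemma gives $F(z_\tau)\subset U$ for all small $\tau$; shrink $\tau$ further so that also $x_\tau:=y_0+\|x_0-y_0\|z_\tau$, $y_\tau:=y_0$ satisfy $\|S(x_\tau,y_\tau)-\sigma\|<\eps$, where $S(x,y):=(Tx-Ty)/\|x-y\|$ (possible since $z_\tau\to z_0$ and $S$ is continuous off the diagonal). By the weak-$*$ upper semicontinuity of the duality map $w\mapsto F(w)$ on $S_X$ there is a neighbourhood $W$ of $z_\tau$ with $F(w)\subset U$ for all $w\in W$. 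As $\pi(Q)$ is dense in $X\times X$ and $(x_\tau-y_\tau)/\|x_\tau-y_\tau\|=z_\tau$, pick $(u,v,u^*)\in Q$ with $(u,v)$ close enough to $(x_\tau,y_\tau)$ that $(u-v)/\|u-v\|\in W$ and $\|S(u,v)-S(x_\tau,y_\tau)\|<\eps$. Then $\hat u^*:=u^*/\|u-v\|\in F((u-v)/\|u-v\|)\subset U$, so $|\hat u^*(\sigma)|>\psi_0-2\eps$, and
\[
\frac{|u^*(Tu-Tv)|}{\|u-v\|^2}=|\hat u^*(S(u,v))|\ge|\hat u^*(\sigma)|-\|S(u,v)-\sigma\|>\psi_0-4\eps>\nu_L(T)-5\eps.
\]
Letting $\eps\to0$ gives the claim, and the ``moreover'' assertion is the special case in which $\pi$ is injective on $Q$. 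The main obstacle is exactly the shrinking lemma of the second paragraph: it is the device that converts ``$\pi(Q)$ dense'' — which never controls the functional, only the pair — into control of the functional, by moving to a direction $z_\tau$ at which the duality set collapses into the region on which the slope of $T$ is visibly large.
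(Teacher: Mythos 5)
Your argument is correct, but it is worth saying up front that the paper has no proof of this statement to compare against: the lemma is imported verbatim from \cite{WHT}, where it is obtained as a corollary of the Lipschitz analogue of the Bonsall--Duncan limit formula, i.e.\ of the identity expressing $\sup\{\re x^*(Tx-Ty)/\|x-y\|^2 : (x,y,x^*)\in\Pi\}$ as $\lim_{\alpha\to 0^+}\bigl(\|\Id+\alpha T\|_L-1\bigr)/\alpha$; since the right-hand side involves no functionals at all, and the lower estimate for $\|\Id+\alpha T\|_L$ needs only one functional per pair on a dense set of pairs, the ``dense set, one functional per point'' statement drops out of that formula (with the usual rotation argument to pass from real parts to moduli). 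Your route is genuinely different and self-contained: you separate $z_0^*$ from the weak-star compact convex set $B\subset F(z_0)$ by some $\eta\in X$, pass to the maximizing face $M_\eta$, and then use the shrinking lemma along $z_\tau=(z_0+\tau\eta)/\|z_0+\tau\eta\|$ together with norm-to-weak-star upper semicontinuity of the duality mapping (which is proved by exactly the same cluster-point argument as your shrinking lemma) to reach a direction at which \emph{every} norming functional --- in particular whichever one $Q$ happens to attach --- sees a value close to $\psi_0$. I checked the details: $B$ is weak-star compact and convex, $M_\eta\cap B=\emptyset$, the estimate $\re w_k^*(\eta)\ge m$ and $\re w_k^*(z_0)\to 1$ are right, and the final chain $|\hat u^*(S(u,v))|\ge|\hat u^*(\sigma)|-\|S(u,v)-\sigma\|>\psi_0-4\eps$ is valid, so the proof stands. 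What each approach buys: yours works directly with $|\cdot|$, needs no limit formula, and isolates the real obstruction (the face $D(u-v)$ may be large) very transparently; the route of \cite{WHT} yields in passing the independently useful derivative formula for $\alpha\mapsto\|\Id+\alpha T\|_L$ at $0^+$, from which both the displayed equality and the ``moreover'' part are immediate. Two small points to make explicit: in the cluster-point steps the limit is along a subnet rather than a subsequence (harmless, since any neighbourhood of a cluster point contains terms of the sequence, which is all you use), and for the ``moreover'' part you should note that removing the diagonal from a dense subset of $X\times X$ leaves a dense set, so the induced $Q$ built from one chosen functional per pair really does have $\pi(Q)$ dense.
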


The {\it join} $J(A, B)$ of sets $A$ and $B$ is the union of all line segments from these sets and defined by $J(A,B) = \bigcup\{ \lambda a + (1-\lambda) b : a \in A, b \in B, 0\leq \lambda \leq 1\}$. 
 
\begin{Lemma}\cite[Lemma 2.6]{WHT}\label{lem:specialh}
	Let $X$ be a Banach space and $x, y \in X$ with $x \neq y$. If $\frac{x-y}{\|x-y\|} \in \overline{J(E)}$ for some set $E \subset B_X$, then for every $\eps > 0$, there exists $z \in X$ such that
	\[
	z - x \in \frac{\|x-y\|}{2}E\,\,\, \text{and} \,\,\, \|y - z\|\leq (1 + \eps) \frac{\|x -y\|}{2}.
	\] 
\end{Lemma}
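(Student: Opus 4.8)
The plan is to unwind the hypothesis $\frac{x-y}{\|x-y\|}\in\overline{J(E)}$ and use the join structure to produce the point $z$ explicitly. Since $\overline{J(E)}$ is the closure of the set of all segments joining points of $E$ (noting $0\notin E$ is not assumed, so actually $J(E)=J(E,E)$ in the paper's notation), every element of $J(E)$ has the form $\lambda e_1+(1-\lambda)e_2$ with $e_1,e_2\in E$ and $\lambda\in[0,1]$. First I would fix $\eps>0$ and, by density, choose $e_1,e_2\in E$ and $\lambda\in[0,1]$ with
\[
\left\|\frac{x-y}{\|x-y\|}-\bigl(\lambda e_1+(1-\lambda)e_2\bigr)\right\|<\eps',
\]
where $\eps'>0$ is a small quantity to be calibrated against $\eps$ at the end. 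The idea is that $x-y$ is then approximately a positive multiple of a vector lying on a segment through two points of $E$, and I want to "split the difference" at the midpoint of $[x,y]$: set $z$ to be (approximately) $x-\frac{\|x-y\|}{2}e_1$, i.e. move from $x$ by half the length of $x-y$ in the direction $-e_1$... but I must be careful that the direction matching is with $x-y$, so in fact I should move from $x$ by a vector in $\frac{\|x-y\|}{2}E$ and check the distance from $z$ to $y$ is at most $(1+\eps)\frac{\|x-y\|}{2}$.

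Concretely, the second key step: write $x-y=\|x-y\|\,w$ where $w$ is within $\eps'$ of $\lambda e_1+(1-\lambda)e_2$. Choose the element of $E$ to be $e_1$ (or $e_2$ — whichever makes the bookkeeping cleaner; by symmetry of the roles of $x$ and $y$ one of the two choices will work, and I expect we may need to also allow rescaling $\lambda$ toward $0$ or $1$). Define
\[
z := x-\frac{\|x-y\|}{2}\,e_1.
\]
Then trivially $z-x=-\frac{\|x-y\|}{2}e_1\in\frac{\|x-y\|}{2}E$ provided $E$ is symmetric, or else we pick the sign/endpoint so that it lands in $E$; the cleanest route is to observe $\frac{x-y}{\|x-y\|}\in\overline{J(E)}$ already gives us, for the midpoint, a vector close to $\frac{1}{2}e_1+\frac12 e_2$ after possibly reparametrizing, and then $z-x$ can be taken $\approx-\frac{\|x-y\|}{2}e_1$ with the exact membership $z-x\in\frac{\|x-y\|}{2}E$ forced by choosing $z$ to make it exact rather than approximate (we have freedom in $z$). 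Then estimate
\[
\|y-z\|=\left\|y-x+\frac{\|x-y\|}{2}e_1\right\|
=\left\|-\|x-y\|w+\frac{\|x-y\|}{2}e_1\right\|
=\|x-y\|\left\|\tfrac12 e_1-w\right\|,
\]
and bound $\|\tfrac12 e_1-w\|\le\|\tfrac12 e_1-(\lambda e_1+(1-\lambda)e_2)\|+\eps'$; the point is that when $\lambda$ is near $\tfrac12$ this is near $\|\tfrac12(e_2-e_1)\|(1-2\lambda)\cdots$ hmm — so in fact the honest calculation shows $\frac12 e_1$ should be replaced by the correct convex combination, and one gets $\|y-z\|\le\frac{\|x-y\|}{2}+(\text{error}\cdot\|x-y\|)$, which is $\le(1+\eps)\frac{\|x-y\|}{2}$ once $\eps'$ is small enough.

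The main obstacle, and the place where I expect the argument to be delicate, is matching the two requirements simultaneously: $z-x$ must lie \emph{exactly} in $\frac{\|x-y\|}{2}E$ (not approximately), while $\|y-z\|$ is only controlled \emph{approximately}. The resolution is to first choose the exact element $e\in E$ with $\frac{x-y}{\|x-y\|}$ close to a segment having $e$ as (one of) its endpoints, \emph{then} define $z:=x-\frac{\|x-y\|}{2}e$ so the first condition is automatic, and only afterward verify the second condition is satisfied with the $(1+\eps)$ slack; the slack is exactly what absorbs the gap between $\frac{x-y}{\|x-y\|}$ and the true segment point. I would also double-check the edge case where the segment degenerates ($e_1=e_2$, i.e. $\frac{x-y}{\|x-y\|}$ is itself close to a single point of $E$), which is the easiest case and the model for the general one. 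Finally I would remark that the factor $\frac12$ and the target bound $(1+\eps)\frac{\|x-y\|}{2}$ are precisely tuned so that $z$ is "halfway" between $x$ and $y$ in a way compatible with the join, which is what makes this lemma the right tool for the subsequent application to $A(\Omega,X)$.
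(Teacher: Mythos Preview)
The paper does not contain its own proof of this lemma; it is quoted from \cite[Lemma~2.6]{WHT} and used as a black box in the proof of Theorem~\ref{th:lipnumaox}. So there is no argument in the present paper to compare against.

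Your overall strategy is the natural one and is essentially what the original proof in \cite{WHT} does: approximate $w:=\frac{x-y}{\|x-y\|}$ by a point $\lambda e_1+(1-\lambda)e_2$ of $J(E)$, select the endpoint with the larger coefficient (say $e_1$ when $\lambda\ge\tfrac12$), define $z$ so that the membership condition holds \emph{exactly}, and then verify the norm bound on $\|y-z\|$ using only $\|e_i\|\le1$. The decisive estimate, which you never actually write down, is
\[
\Bigl\|w-\tfrac12 e_1\Bigr\|\le\bigl\|\lambda e_1+(1-\lambda)e_2-\tfrac12 e_1\bigr\|+\eps'\le\bigl(\lambda-\tfrac12\bigr)+(1-\lambda)+\eps'=\tfrac12+\eps',
\]
and this is what produces the factor $(1+\eps)\tfrac{\|x-y\|}{2}$ once $\eps'$ is chosen small. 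Your write-up stops short of this (the ``hmm'' is exactly where this triangle-inequality step belongs) and gets tangled on signs: you set $z:=x-\tfrac{\|x-y\|}{2}e_1$, observe that then $z-x\in-\tfrac{\|x-y\|}{2}E$ rather than $+\tfrac{\|x-y\|}{2}E$, and appeal to a symmetry of $E$ that is not assumed. The remedy is simply to be consistent about which of $x-z$ or $z-x$ you fix and which endpoint you take; once that choice is made, the inequality above closes the argument without further cases.
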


\begin{Lemma}\cite[Lemma 4.1]{WHT}\label{lem:Mlip}
	Let $X$ and $Y$ be Banach spaces. If $x_1, x_2 \in X$, $y_1, y_2 \in Y$ satisfy that $\|y_1 - y_2\|_Y \leq M \|x_1 - x_2\|_X$ for some $M > 0$, then there exists a $M$-Lipschitz map $F \in \textup{Lip}(X, Y)$ such that $F(x_1) = y_1$ and $F(x_2) = y_2$.
\end{Lemma}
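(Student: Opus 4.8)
The plan is to exhibit $F$ explicitly as an affine map: compose a norm-one linear functional on $X$ supporting the vector $x_1 - x_2$ with the segment joining $y_1$ and $y_2$ in $Y$. First I would dispose of the degenerate situation. If $y_1 = y_2$, which by the hypothesis $\|y_1 - y_2\| \le M\|x_1 - x_2\|$ in particular covers the case $x_1 = x_2$, then the constant map $F \equiv y_1$ is $0$-Lipschitz, hence $M$-Lipschitz, and satisfies $F(x_1) = F(x_2) = y_1 = y_2$. So from now on I may assume $y_1 \neq y_2$, whence the hypothesis forces $x_1 \neq x_2$.

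Set $d := \|x_1 - x_2\| > 0$ and $v := y_1 - y_2 \neq 0$. By the Hahn--Banach theorem pick $u^* \in X^*$ with $\|u^*\| = 1$ and $u^*(x_1 - x_2) = d$, and define $F : X \to Y$ by
\[
F(x) = y_2 + \frac{u^*(x - x_2)}{d}\, v \qquad (x \in X).
\]
Then $F(x_2) = y_2$ is immediate, and $F(x_1) = y_2 + \frac{u^*(x_1-x_2)}{d}v = y_2 + v = y_1$. For the Lipschitz estimate I would observe that $F(x) - F(x') = \frac{u^*(x-x')}{d}\,v$ for all $x, x' \in X$, so that
\[
\|F(x) - F(x')\| = \frac{|u^*(x-x')|}{d}\,\|v\| \le \frac{\|x-x'\|}{d}\,\|y_1 - y_2\| \le M\,\|x - x'\|,
\]
the last step being precisely the hypothesis. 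Hence $F \in \textup{Lip}(X, Y)$ is $M$-Lipschitz, as required.

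Since the argument is entirely elementary, there is no genuine obstacle; the only points worth a moment's attention are that one must check $x_1 \neq x_2$ before normalizing by $d$, and that in general one cannot additionally demand $F(0) = 0$ (for instance, with $X = Y = \R$, $M = 1$, $(x_1,x_2) = (1,2)$ and $(y_1,y_2) = (10,11)$ there is no $1$-Lipschitz interpolant vanishing at $0$). This is why the conclusion is stated with $\textup{Lip}(X,Y)$ rather than $\Lip(X,Y)$; when, as in the typical applications, one of the prescribed points is the origin with prescribed value $0$, the displayed $F$ automatically belongs to $\Lip(X,Y)$.
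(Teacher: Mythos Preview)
Your proof is correct. Note, however, that the paper does not supply its own proof of this lemma: it is quoted verbatim from \cite[Lemma 4.1]{WHT} and used as a black box. There is therefore no in-paper argument to compare against. Your construction---choosing a norming functional $u^*$ for $x_1-x_2$ via Hahn--Banach and interpolating affinely---is the standard and essentially unique natural proof, and it is almost certainly what appears in \cite{WHT} as well.

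Your closing remark about $\textup{Lip}(X,Y)$ versus $\Lip(X,Y)$ is well taken: in this paper $\Lip$ is the macro for $\mathrm{Lip}_0$, so the lemma really is stated for maps not required to vanish at $0$, and your counterexample shows the distinction matters. In the application (the proof of Theorem~\ref{th:lipnumaox}) the map $F$ is only used through differences $F(x_0)-F(y_0)$ and compositions that cancel the value at $0$, so the base point is immaterial there.
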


\subsection{Lipschitz numerical index of $A(\Omega,X)$}\label{subsection:omega}
%In this section, we show the stability of Lipschitz numerical index of the space $A(\Omega, X)$. 
 When we inspect the proof of the stability result on the Lipschitz numerical index of $C(K, X)$ in \cite{WHT}, Urysohn's lemma also plays an important role along with the aforementioned lemmata. However, it is shown that the Urysohn's lemma may be too rigid to use for function algebras (for such an example, see \cite[pp. 371]{CGK}). 
 
 To overcome this difficulty, a Urysohn-type lemma for function algebras is introduced in \cite{KL}, inspired by the similar lemma for uniform algebras (see \cite{CGK}). A point $t_0 \in \Omega$ is said to be a {\it strong peak point} for $A(\Omega)$ if there exists $f \in A(\Omega)$ such that $\|f\|_{\infty} = |f(t_0)|$ and for every open neighborhood $U$ containing $t_0$, we have $|f(t)| < \|f\|_{\infty}$ for all $t \notin U$. The collection of all strong peak points for $A(\Omega)$ is denoted by $\rho A(\Omega)$. Now we recall a Urysohn-type lemma for function algebras.
 
\begin{Lemma}\cite[Lemma 3]{KL}\label{lem:urysohnsp}
	Let $\Omega$ be a Hausdorff space and $A$ be a subalgebra of $C_b(\Omega)$. Suppose that $U$ is an open subset of $\Omega$ and $t_0 \in U \cap \rho A$. Then given $0< \eps < 1$, there exists a strong peaking function $\varphi \in A$ such that $\|\varphi\| = 1 = \varphi(t_0)$, $\sup_{t \in \Omega \setminus U} |\varphi(t)| < \eps$, and for all $ t \in \Omega$,
	\[
	|\varphi(t)| + (1- \eps)|1 - \varphi(t)| \leq 1.
	\]
\end{Lemma}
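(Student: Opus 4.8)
The plan is to deduce the whole statement from a single planar construction obtained by post-composing a peak function with a carefully chosen holomorphic map. First I would use $t_0\in\rho A$ to fix a peaking function and normalise it: after multiplying by a suitable unimodular scalar and dividing by its norm (operations internal to the subalgebra $A$), one gets $f\in A$ with $\|f\|_\infty=f(t_0)=1$ and, by the Hausdorff separation axiom together with the peaking condition, $|f(t)|<1$ for every $t\neq t_0$; moreover $r:=\sup_{t\in\Omega\setminus U}|f(t)|<1$ (this uniform gap is automatic when $\Omega$ is compact, since then $\Omega\setminus U$ is compact, and is guaranteed in general by the strong-peak-point property — this is the one place where ``strong'' is used beyond plain peaking). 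Then $f(\Omega)\subseteq\overline{\mathbb D}$, and the statement reduces to producing a function $\Phi$, holomorphic on $\mathbb D$ and continuous on $\overline{\mathbb D}$, with $\Phi(1)=1$, with $\Phi(\overline{\mathbb D})\subseteq\Omega_\eps:=\{z\in\mathbb C:|z|+(1-\eps)|1-z|\le 1\}$, and with $|\Phi(w)|<\eps$ whenever $|w|\le r$. Indeed $\varphi:=\Phi\circ f$ then lies in $A$ and satisfies $\varphi(t_0)=\Phi(1)=1$, $\|\varphi\|_\infty=1$ (as $\Omega_\eps\subseteq\overline{\mathbb D}$ and $\varphi(t_0)=1$), $\sup_{\Omega\setminus U}|\varphi|<\eps$, and $|\varphi(t)|+(1-\eps)|1-\varphi(t)|\le 1$ on $\Omega$, the last being exactly $\varphi(\Omega)\subseteq\Omega_\eps$.

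For the choice of $\Phi$, the point is that iterating $f$ does \emph{not} work: powers of a peak function do concentrate near $t_0$, but their values can spiral along $\partial\mathbb D$ and thus leave the lens $\Omega_\eps$ near the point $1$. Instead I would take a \emph{fractional} power,
\[
\Phi_\alpha(w)=1-(1-w)^{\alpha},\qquad w\in\overline{\mathbb D},
\]
where $0<\alpha<1$ is small and $(\cdot)^{\alpha}$ is the principal branch, which is continuous on $\overline{\mathbb D}$ because $1-w$ stays in the closed right half-plane and vanishes only at $w=1$. By the generalized binomial theorem $\Phi_\alpha$ has an absolutely convergent Taylor series with vanishing constant term, so $\varphi=\Phi_\alpha\circ f$ is a uniform limit of polynomials in $f$ having no constant term, and hence lies in $A$ (using completeness of $A$, as for function algebras). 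Also $\Phi_\alpha(1)=1$, so $\varphi(t_0)=1$.

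The technical heart is the geometric inclusion $\Phi_\alpha(\overline{\mathbb D})\subseteq\Omega_\eps$ for $\alpha$ small. Writing $\zeta=(1-w)^{\alpha}$, as $w$ runs over $\overline{\mathbb D}$ the point $1-w$ runs over the disc $\{z:|z-1|\le 1\}$, which lies in $\{\operatorname{Re}z\ge 0\}$ with $|z|\le 2$; hence $|\zeta|\le 2^{\alpha}$ and $|\arg\zeta|\le\alpha\pi/2$. Since $\Phi_\alpha(w)=1-\zeta$, the requirement $\Phi_\alpha(w)\in\Omega_\eps$ reads $|1-\zeta|+(1-\eps)|\zeta|\le 1$; squaring (legitimate since $(1-\eps)|\zeta|\le(1-\eps)2^{\alpha}<1$ for small $\alpha$) and simplifying, this is equivalent to
\[
\cos(\arg\zeta)\ \ge\ (1-\eps)+\tfrac12\,\eps(2-\eps)\,|\zeta|.
\]
The right-hand side is at most $1-\tfrac12\eps^{2}+O(\alpha)$ (using $|\zeta|\le 2^{\alpha}$), while the left-hand side is at least $\cos(\alpha\pi/2)\ge 1-\tfrac18\alpha^{2}\pi^{2}$, so the inequality holds for every $\alpha$ below an explicit threshold depending only on $\eps$. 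For the estimate off $U$: when $|f(t)|\le r$ the value $1-f(t)$ lies in a fixed compact subset of $\mathbb C\setminus(-\infty,0]$ bounded away from $0$, on which $(1-w)^{\alpha}\to 1$ uniformly as $\alpha\to 0^{+}$, whence $\varphi\to 0$ uniformly on $\Omega\setminus U$; so shrinking $\alpha$ once more yields $\sup_{t\in\Omega\setminus U}|\varphi(t)|<\eps$. With such an $\alpha$ fixed, $\varphi$ has all three required properties, and it is a strong peaking function because $\Omega_\eps\cap\partial\mathbb D=\{1\}$ forces $|\varphi(t)|=1$ precisely when $f(t)=1$, i.e.\ only at $t_0$.

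The main obstacle is exactly the planar inclusion $\Phi_\alpha(\overline{\mathbb D})\subseteq\Omega_\eps$ together with the calibration of $\alpha$ against $\eps$; the idea that makes it go through is that a \emph{sub-linear} power $\alpha<1$ contracts the admissible arguments $\arg(1-\varphi)$ into the narrow cone at $1$ carved out by the lens $\Omega_\eps$, something no polynomial post-composition — and in particular no power of $f$ — can do. A secondary, minor point is the passage from ``$|f|<1$ off $U$'' to a uniform gap $r<1$, which is where the topological strength of a strong peak point, as opposed to a mere peak point, is needed.
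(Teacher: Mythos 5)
Your proposal is correct in substance, but it takes a genuinely different route from the one the paper relies on. The paper does not prove this lemma at all: it quotes it from \cite{KL}, and its in-house analogue (Lemma \ref{lem:urysohnsb}, for strong boundary points of a uniform algebra) is proved by composing a suitable power of a peaking function with a conformal homeomorphism $\psi$ of $\overline{\mathbb{D}}$ onto the Stolz region $St_{\eps}$ supplied by the Riemann mapping theorem (Rudin, Theorem 14.19), membership in the algebra being secured by Lemma \ref{lem:uniflim1}; the argument in \cite{KL} is of the same conformal-mapping type. You avoid the Riemann mapping theorem entirely: the explicit map $\Phi_\alpha(w)=1-(1-w)^{\alpha}$ with a small fractional power, together with the sector bounds $|\arg (1-w)^{\alpha}|\le \alpha\pi/2$ and $|(1-w)^{\alpha}|\le 2^{\alpha}$, does the job; your reduction of the Stolz condition to $\cos(\arg\zeta)\ge (1-\eps)+\tfrac{1}{2}\eps(2-\eps)|\zeta|$ checks out, as does the uniform smallness of $\Phi_\alpha\circ f$ off $U$ for small $\alpha$ (this is indeed where the uniform gap provided by the strong peak point enters), and the binomial series with vanishing constant term keeps $\varphi$ in $A$ without assuming $A$ unital. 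What your approach buys is explicitness (a computable threshold for $\alpha$ in terms of $\eps$) and independence from conformal-mapping boundary behaviour, at the cost of the calibration computation; both routes need $A$ closed, which is the intended hypothesis. One point to tighten: your last sentence only shows that $\varphi$ peaks at $t_0$ (i.e.\ $|\varphi(t)|=1$ only at $t=t_0$), whereas the lemma asserts strong peaking; this follows by the same mechanism — if $|\varphi(t_i)|\to 1$ along a net, the Stolz inequality forces $\varphi(t_i)\to 1$, hence $(1-f(t_i))^{\alpha}\to 0$, hence $f(t_i)\to 1$, and the strong peaking of $f$ gives $t_i\to t_0$ — so the omission is easily repaired, but it should be stated.
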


%The space $A_b(B_Y, X)$ (resp. $A_u(B_Y, X)$) of $X$-valued bounded (resp. uniformly) continuous functions on $B_Y$ that are holomorphic on the interior of $B_Y$ is an example of the space $A(\Omega, X)$. 
% We can also define a strong peak point for the space $A(\Omega, X)$ similar to a strong peak point for $A(\Omega)$ with appropriate modification. It is well-known that for the base algebra $A$ of the space $A(\Omega, X)$ we always have $\rho A = \rho A(\Omega, X)$ \cite{L}.

First, we compare the Lipschitz numerical index of $A(\Omega, X)$ with the index of $X$.	
	\begin{Theorem}\label{th:lipnumaox}
		Let $\Omega$ be a Hausdorff space, $X$ be a Banach space, and $A$ be the base algebra of $A(\Omega, X)$. Suppose that the set $\rho A$ of strong peak points is a norming subset of $\Omega$. Then $ n_L(X)\leq n_L(A(\Omega, X)) $.
	\end{Theorem}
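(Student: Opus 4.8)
The plan is to show that for every Lipschitz map $T \in \Lip(X,X)$ with $\|T\|_L = 1$, one can build a Lipschitz map $\widehat{T} \in \Lip(A(\Omega,X), A(\Omega,X))$ with $\|\widehat{T}\|_L = 1$ (or at least $\le 1$) such that $\nu_L(\widehat{T}) \le \nu_L(T)$; taking the infimum over $T$ then yields $n_L(A(\Omega,X)) \ge n_L(X)$ only if we move in the right direction, so in fact I want the reverse: given $S \in \Lip(A(\Omega,X), A(\Omega,X))$ with $\|S\|_L = 1$, I must produce test elements showing $\nu_L(S) \ge n_L(X)$. The cleaner route, following the $C(K,X)$ argument of \cite{WHT}, is: fix $S \in S_{\Lip(A(\Omega,X),A(\Omega,X))}$ and $\eps > 0$; evaluate $S$ at appropriate points of $A(\Omega,X)$ built from a strong peaking function $\varphi$ (via Lemma \ref{lem:urysohnsp}) concentrated near a strong peak point $t_0$, tensored with vectors of $X$, namely elements of the form $\varphi \otimes x + \psi \otimes y$ for suitable $x,y \in X$ and auxiliary algebra elements; then the near-peaking property $|\varphi(t)| + (1-\eps)|1-\varphi(t)| \le 1$ localizes the relevant norms to the fiber over $t_0$, so that the numerical-range computation in $A(\Omega,X)$ collapses to one in $X$, up to $\eps$.

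Concretely, I would first use Lemma \ref{lem:radiusattain}: since $\rho A$ is norming, the evaluations at strong peak points give enough functionals, and I expect to represent a dense-enough subset of states $\Pi_{A(\Omega,X)}$ by pairs $(F, G)$ where $F - G$ is, up to small error, of the form $\varphi \otimes (x-y)$ with $\varphi$ a strong peaking function at some $t_0 \in \rho A$ and $x^* \circ$ (evaluation at $t_0$) realizing a duality functional for $x - y$ in $X$. The key computation is that for such $F, G$, one has $\|F - G\|_{A(\Omega,X)}$ close to $\|x-y\|_X$ and the functional $\delta_{t_0} \otimes x^*$ (evaluation at $t_0$ followed by $x^* \in D(x-y)$) lies, approximately, in $D(F-G)$. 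Then defining $\widehat{T}(F)(t) := T(F(t))$ — which requires checking $\widehat{T}$ maps $A(\Omega,X)$ into itself and is Lipschitz of norm one, using properties (i)--(iii) of $A(\Omega,X)$ and that $T$ is Lipschitz — we get
\[
\nu_L(\widehat{T}) \ \ge\ \sup \frac{|\delta_{t_0}(x^*(\widehat{T}F - \widehat{T}G))|}{\|F-G\|^2}\ \gtrsim\ \sup \frac{|x^*(Tx-Ty)|}{\|x-y\|^2} - O(\eps)\ =\ \nu_L(T) - O(\eps),
\]
and letting $\eps \to 0$ gives $\nu_L(\widehat{T}) \ge \nu_L(T) \ge n_L(A(\Omega,X))$, hence $n_L(X) = \inf_T \nu_L(T) \ge n_L(A(\Omega,X))$ — wait, this is the wrong direction, so I instead argue the contrapositive: the map $T \mapsto \widehat{T}$ is norm-preserving and $\nu_L(\widehat T) \le \nu_L(T)$ would be needed; the correct statement is that restricting $S$ on $A(\Omega,X)$ to the "fiber" copies of $X$ and using Lemma \ref{lem:Mlip} to extend, one shows $\nu_L(S) \ge n_L(X)$ for every $S$, whence $n_L(A(\Omega,X)) \ge n_L(X)$.

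Let me restate the intended architecture precisely. Given $S \in \Lip(A(\Omega,X),A(\Omega,X))$ with $\|S\|_L = 1$, and $\eps > 0$: pick $F_0, G_0 \in A(\Omega,X)$ with $\|S F_0 - S G_0\| > (1-\eps)\|F_0 - G_0\|$; evaluate at a strong peak point $t_0$ where the norm of $SF_0 - SG_0$ is nearly attained (possible since $\rho A$ is norming and $SF_0 - SG_0 \in A(\Omega,X)$ whose base algebra is a function algebra); use the strong peaking function $\varphi$ from Lemma \ref{lem:urysohnsp} at $t_0$ together with Lemma \ref{lem:specialh} and Lemma \ref{lem:Mlip} to replace $F_0, G_0$ by $F_0 + \varphi \otimes u$, $G_0 + \varphi \otimes v$ type perturbations that make the difference "live near $t_0$"; and finally read off a state in $\Pi_X$ at the fiber over $t_0$. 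The main obstacle I anticipate is precisely this localization step: unlike the $C(K,X)$ case where Urysohn's lemma gives functions that are identically $1$ near $t_0$ and $0$ away from it, here Lemma \ref{lem:urysohnsp} only gives the inequality $|\varphi(t)| + (1-\eps)|1-\varphi(t)| \le 1$, so I must carefully track how errors of size $\eps$ propagate through the nonlinear map $S$ and through the renormalization by $\|F-G\|^2$ in the definition of $\nu_L$; controlling $\|S\|_L$ against these perturbations, and ensuring the chosen duality functional at $t_0$ genuinely belongs (up to $\eps$) to $D(F - G)$ in $A(\Omega,X)^*$, is where the bulk of the technical work lies. The join lemma (Lemma \ref{lem:specialh}) will be needed to guarantee that the perturbed points $F, G$ can be chosen along a segment so that the direction $(F-G)/\|F-G\|$ is realized with the correct normalization, mirroring the role it plays in \cite{WHT}.
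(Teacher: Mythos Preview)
Your third paragraph lands on essentially the paper's architecture: start from $S \in S_{\Lip(A(\Omega,X),A(\Omega,X))}$, pick $f,g$ and $t_0 \in \rho A$ where the Lipschitz quotient is nearly attained, then localize near $t_0$ using Lemmas~\ref{lem:urysohnsp}, \ref{lem:specialh}, \ref{lem:Mlip}. But two points need correcting.

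First, the composition-operator idea $\widehat{T}(F)(t) := T(F(t))$ from your second paragraph is not only the wrong direction (as you yourself notice twice); it is precisely the device the paper uses for the \emph{reverse} inequality $n_L(A(\Omega,X)) \le n_L(X)$ in Theorem~\ref{th:lipnumaox_conv}, and it requires the extra hypothesis that $Q_T$ maps $A(\Omega,X)$ into itself---a hypothesis that fails in general (think of disk algebras). So that whole paragraph should be discarded, not just patched.

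Second, and more substantively, your final step ``read off a state in $\Pi_X$ at the fiber over $t_0$'' is not enough. A single state gives you one value of the form $|x^*(Sx-Sy)|/\|x-y\|^2$, but to bound $\nu_L(S)$ below by $n_L(X)$ you must produce an entire Lipschitz map $S_X \in \Lip(X,X)$ with $\|S_X\|_L$ close to $1$ and $\nu_L(S_X) \le \nu_L(S)$. The paper does this by building an explicit $\Phi : X \to A(\Omega,X)$ of the form $\Phi(x) = (1-\eta)(1-\varphi)F(x) + \varphi\, x$, where $F$ comes from Lemma~\ref{lem:Mlip} and $\varphi$ from Lemma~\ref{lem:urysohnsp}; the crucial feature is that $\Phi(x)(t_0) = x$ exactly, so $\|\Phi(x)-\Phi(y)\| = \|x-y\|$ and, for any $x^* \in D(x-y)$, the functional $x^* \circ \delta_{t_0}$ lies \emph{exactly} in $D(\Phi(x)-\Phi(y))$. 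One then sets $S_X(x) = (S\Phi(x))(t_0) - (S\Phi(0))(t_0)$. Your sketch mentions the right lemmas but never isolates this map $\Phi$ or the identity $\Phi(x)(t_0)=x$, which is what makes the duality pairing line up without $\eps$-losses at that step; the Stolz-type inequality $|\varphi| + (1-\eps)|1-\varphi| \le 1$ is what keeps $\Phi$ a contraction globally while being an isometry at the fiber $t_0$. The role of Lemma~\ref{lem:specialh} is also more specific than you indicate: it is used \emph{before} defining $\Phi$, to replace $(f,g)$ by $(h,f)$ with $h-f$ lying in the ``peaking'' set $E_{f_0}$, which is what guarantees $\|h(t_0)-f(t_0)\| = \|h-f\|$ and hence that $\|S_X\|_L \ge 1 - O(\eps)$.
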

	
	\begin{proof}
	Let $T \in \Lip(A(\Omega, X),A(\Omega, X))$ where $\|T\|_L = 1$. For every $t \in \Omega$, define $T_t \in \Lip(A(\Omega, X), X)$ by $T_t(h) = (Th)(t)$ for every $h \in A(\Omega,X)$. Notice that $\|T\|_L = \sup\{\|T_t\|_L: t \in \rho A\}$. Let $\eps >0$ be fixed. Find $f, g \in A(\Omega, X)$ and, since $\rho A$ is norming, $t_0 \in \rho A$ such that
		\begin{equation}\label{eq:fng}
			\|T_{t_0}(f) - T_{t_0}(g)\| > (1-\eps)\|f - g\|. 	
		\end{equation}
		 
		Put $f_0 = \frac{f-g}{\|f-g\|}$ and consider 
		\[
		C(t_0) = \{\varphi \in A : |\varphi(t)| + (1 - \eps)|1 - \varphi(t)| \leq 1 \text{ for every } t \in \Omega \text{ and } \varphi(t_0) = 1 \}
		\] 
		and 
		\begin{equation}\label{eq:Ef0}
		E_{f_0} = \{(1-\eps)(1 - \varphi)f_0 + \varphi x: x \in S_X, \,\varphi \in C(t_0)\}.
		\end{equation} 
		First, we show that $f_0 \in \overline{J(E_{f_0})}$. Indeed, for $\delta > 0$, let $f_0(t_0) = \lambda x_1 + (1 - \lambda) x_2$ for $x_1, x_2 \in S_X$ for some $0\leq \lambda \leq 1$. Define a set $U = \{t \in \Omega: \|f_0(t) - f_0(t_0)\| < \delta\}$. Then by Lemma \ref{lem:urysohnsp}, there exists $\varphi \in A$ such that $\varphi(t_0) = 1$, $\sup_{\Omega \setminus U} |\varphi(t)| < \delta$ and 
		\begin{equation}
		|\varphi(t)| + (1 - \delta)|1 - \varphi(t)| \leq 1.
		\end{equation}
		
		Now define 
		\begin{align*}
			f_j &= (1 - \delta)(1 - \varphi)f_0 + \varphi x_j \qquad \text{for } j=1,2.
		\end{align*}
		For $t \in \Omega$, we can see that 
		\begin{equation*}
			\|f_0(t) - \lambda f_1(t) - (1- \lambda) f_2(t)\|_X 
			\leq 2\delta + \|\varphi(t) (f_0(t) - f_0 (t_0)) \|_X.
		\end{equation*}
		Here we consider two cases: If $t \in U$ we have 
		\begin{align*}
		 \|\varphi(t) (f_0(t) - f_0 (t_0)) \|_X \leq \|f_0(t) - f_0(t_0)\|_X < \delta.
		\end{align*} 
		On the other hand, if $t \in \Omega \setminus U$ we have
		\[
		 \|\varphi(t) (f_0(t) - f_0 (t_0)) \|_X < \eps \|f_0(t) - f_0 (t_0) \|_X \leq 2\delta.
		\]
		This shows that
		\[
		\|f_0 -( \lambda f_1 + (1- \lambda) f_2) \|_X \leq 4\delta,
		\]
		hence, $f_0 \in \overline{J(E_{f_0})}$. Also notice that if $x \in S_X$ and $\varphi \in C(t_0)$, we have 
		\[
		\|(1-\eps)(1 - \varphi(t))f_0(t) + \varphi(t) x\|_X \leq (1 - \eps)|1 - \varphi(t)| + |\varphi(t)| \leq 1
		\]
		for every $t \in \Omega$. Hence we see that $E_{f_0} \subset B_{A(\Omega, X)}$.		
		Then by Lemma \ref{lem:specialh}, there exists $h \in A(\Omega, X)$ such that
		\begin{equation}\label{eq:specialh}
			h - f \in \frac{\|f - g\|}{2}E_{f_0}\,\,\, \text{and} \,\,\, \|h - g\| \leq (1 + \eps)\frac{\|f - g\|}{2}.
		\end{equation}
		Combining \eqref{eq:fng} with \eqref{eq:specialh}, we obtain that 
		\begin{equation*}
			\|T_{t_0}h - T_{t_0}f\|_X \geq \|T_{t_0}f - T_{t_0}g\| - \|g-h\| \geq (1 - 3\eps)\|h - f\|.
		\end{equation*}
		
		Now let $x_0 = h(t_0)$ and $y_0 = f(t_0)$. Observe that $\|x_0 - y_0\|_X = \|h - f\|$. By Lemma \ref{lem:Mlip}, there exists a norm one $F \in \textup{Lip}(X, A(\Omega, X))$ such that $F(x_0) = h$ and $F(y_0) = f$.
		Let 
		\begin{equation}\label{eq:setU}
			\tilde{U} = \{t \in \Omega : \|h(t) - x_0\| < \eps\|h-f\|\,\,\, \text{and} \,\,\, \|f(t) - y_0\| < \eps\|h-f\|\}.
		\end{equation}	
		Then by Lemma \ref{lem:urysohnsp}, there exists $\varphi \in A$ such that
		\small
		\begin{equation}\label{eq:urysohn2}
			\varphi(t_0) = 1, \,\,\, \sup_{\Omega \setminus \tilde{U}} |\varphi(t)| < \eta := \frac{\eps \|h-f\|}{2\|h\|+2\|f\|+1} \,\,\, \text{and}\,\,\, |\varphi(t)| + (1 - \eta)|1 - \varphi(t)| \leq 1.
		\end{equation}
		\normalsize
		Define a function $\Phi: X \rightarrow A(\Omega, X)$ by 
		\[
		\Phi(x) = (1-\eta)(1 - \varphi)F(x) + \varphi x.
		\]
		By the triangle inequality and \eqref{eq:urysohn2}, we obtain
		\begin{align*}
		\|T_{t_0}(\Phi(x_0)) &- T_{t_0}(\Phi(y_0))\|_X \\
		&\geq \|T_{t_0}(h) - T_{t_0}(f)\|_X - \|\eta\varphi h - \varphi h - \eta h + \varphi x_0\|	- \|\eta\varphi f - \varphi f - \eta f + \varphi y_0\| \\ 
		&\geq \|T_{t_0}(h) - T_{t_0}(f)\|_X - ( \eps \|h - f\| + \|\varphi h - \varphi x_0\| ) - ( \eps \|h - f\| + \|\varphi f - \varphi y_0\|) \\ 
		&= \|T_{t_0}(h) - T_{t_0}(f)\|_X - 2\eps\|h-f\| - \|\varphi h - \varphi x_0\| - \|\varphi f - \varphi y_0\|.
		\end{align*} 
		Again, we consider two cases. If $t \in \tilde{U}$, we have
		\[ 
		\|\varphi(t) h(t) - \varphi(t) x_0\|_X \leq \|h(t) - x_0\|_X \overset{(\ref{eq:setU})}{<} \eps\|h-f\|.
		\]
		On the other hand if $t \in \Omega \setminus \tilde{U}$, we have
		\[
		\|\varphi(t) h(t) - \varphi(t) x_0\|_X \overset{(\ref{eq:urysohn2})}{\leq} 2 \eta \|h\| < \eps\|h-f\|. 
		\]
		Hence we obtain that $\|\varphi h - \varphi x_0\| < \eps\|h - f\|$. Similarly, we can also show that $\|\varphi f - \varphi y_0\| < \eps\|h - f\|$. This implies that
		\begin{eqnarray*}
		\|T_{t_0}(\Phi(x_0)) - T_{t_0}(\Phi(y_0))\|_X 
		&\geq& \|T_{t_0}(h) - T_{t_0}(f)\|_X - 4\eps\|h-f\|\\
		&\geq& (1-3\eps)\|h - f\| - 4\eps\|h - f\| = (1 - 7\eps) \|x_0 - y_0\|.
		\end{eqnarray*}
	
		Now define $S \in \Lip(X, X)$ by $S(x) = T_{t_0}(\Phi(x)) - T_{t_0}(\Phi(0))$ for every $x \in X$. Since $\|S(x_0) - S(y_0)\|_X \geq (1 - 7\eps)\|x_0 - y_0\|$, we have that $1 - 7\eps \leq \|S\|_L \leq 1$.
				If $(x - y)^* \in D(x-y)$, then $(x-y)^* \circ \delta_{t_0} \in D(\Phi(x) - \Phi(y))$. Furthermore, 
		\[
		(x-y)^*\circ \delta_{t_0}(T(\Phi(x)) - T(\Phi(y))) = (x-y)^*(Sx - Sy).
		\] 
		This implies that $\nu_L (T) \geq \nu_L (S)$. Moreover, $n_L(X) \leq \nu_L\left(\frac{S}{\|S\|_L}\right) \leq \frac{1}{1-7\eps}\nu_L(S)$. Therefore, we obtain $\nu_L (T) \geq \nu_L (S) \geq (1-7\eps) n_L(X)$. Since $\eps > 0$ is arbitrary, we get $n_L(A(\Omega, X)) \geq n_L(X)$.
		\end{proof}

		Under a certain assumption, the Lipschitz numerical index of $A(\Omega, X)$ is equal to the Lipschitz numerical index of $X$.
		
		\begin{Theorem}\label{th:lipnumaox_conv}
		Let $\Omega$ be a Hausdorff space, $X$ be a Banach space, and $A$ be the base algebra of $A(\Omega, X)$. Suppose that the set $\rho A$ of strong peak points is a norming subset of $\Omega$. If the range of the mapping $Q_S : A(\Omega, X) \rightarrow C_b (\Omega, X)$ given by 
		\[ 
		Q_S (f) (t) = S ( f(t)) \quad (f \in A(\Omega,X),\, t \in \Omega)
		\]
		is contained in $A(\Omega, X)$ for every $S \in \Lip (X, X)$, then $ n_L(X) = n_L(A(\Omega, X)) $.
	\end{Theorem}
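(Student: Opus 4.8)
The plan is to combine the inequality $n_L(X) \le n_L(A(\Omega,X))$ already established in Theorem~\ref{th:lipnumaox} with the reverse inequality $n_L(A(\Omega,X)) \le n_L(X)$, which is where the new hypothesis on $Q_S$ enters. So the whole task reduces to: given $S \in S_{\Lip(X,X)}$ with $\nu_L(S)$ close to $n_L(X)$, produce $T \in \Lip(A(\Omega,X),A(\Omega,X))$ with $\|T\|_L = 1$ (or arbitrarily close) and $\nu_L(T) \le \nu_L(S) + \eps$; then $n_L(A(\Omega,X)) \le \nu_L(T) \le \nu_L(S) + \eps$, and taking infima over $S$ and then $\eps \to 0$ gives $n_L(A(\Omega,X)) \le n_L(X)$.

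The natural candidate is $T := Q_S$ itself, which by hypothesis maps $A(\Omega,X)$ into $A(\Omega,X)$; clearly $Q_S(0) = 0$ since $S(0)=0$. First I would check $\|Q_S\|_L = \|S\|_L = 1$: for $f,g \in A(\Omega,X)$ we have $\|Q_S f - Q_S g\|_\infty = \sup_{t}\|S(f(t)) - S(g(t))\| \le \|S\|_L \sup_t \|f(t)-g(t)\| = \|S\|_L\|f-g\|_\infty$, giving $\|Q_S\|_L \le \|S\|_L$; for the reverse, evaluate on constant functions $f \equiv x$, $g \equiv y$, which lie in $A(\Omega,X)$ by property~(ii) (constants are in the function algebra $A$, and $A \otimes X \subset A(\Omega,X)$), so $\|Q_S\|_L \ge \|S\|_L$. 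The heart of the argument is the estimate $\nu_L(Q_S) \le \nu_L(S)$. Fix $(f,g,\Lambda) \in \Pi_{A(\Omega,X)}$ with $f \ne g$. Since $\rho A$ is norming, $\|f-g\|_\infty = \sup_{t \in \rho A}\|f(t)-g(t)\|$, and more to the point, by Lemma~\ref{lem:radiusattain} it suffices to compute $\nu_L(Q_S)$ over a set $Q \subset \Pi$ whose projection is dense in $A(\Omega,X) \times A(\Omega,X)$; I would take pairs $(f,g)$ together with a functional of the form $\lambda (x-y)^* \circ \delta_{t_0}$ where $t_0 \in \rho A$ is a point at which (approximately) $\|f(t_0)-g(t_0)\| = \|f-g\|_\infty$ and $(x-y)^* \in D(f(t_0)-g(t_0))$, suitably normalized so that it lies in $D(f-g)$. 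Such a functional satisfies $|\Lambda(Q_S f - Q_S g)| = |(x-y)^*(S(f(t_0)) - S(g(t_0)))|$, and dividing by $\|f-g\|_\infty^2 = \|f(t_0)-g(t_0)\|^2$ (up to an $\eps$ from the "approximately") gives a quotient bounded by $\nu_L(S)$. Running this over the dense set and invoking Lemma~\ref{lem:radiusattain} yields $\nu_L(Q_S) \le \nu_L(S)$; combined with $\|Q_S\|_L = 1$ this gives $n_L(A(\Omega,X)) \le \nu_L(Q_S) \le \nu_L(S)$, and then $n_L(A(\Omega,X)) \le n_L(X)$.

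The main obstacle is the functional-realization step: one needs to know that for a dense family of pairs $(f,g) \in A(\Omega,X)^2$ there is a point $t_0 \in \rho A$ with $\|f(t_0)-g(t_0)\|$ arbitrarily close to $\|f-g\|_\infty$, and that a Hahn--Banach functional $(x-y)^*$ on $X$ at $f(t_0)-g(t_0)$ composed with $\delta_{t_0}$ genuinely lies in $D(f-g) \subset A(\Omega,X)^*$ — this is exactly the computation "$(x-y)^*\circ\delta_{t_0} \in D(\Phi(x)-\Phi(y))$" performed in the proof of Theorem~\ref{th:lipnumaox}, so I would reuse that verification verbatim. The first point follows from $\rho A$ being norming applied to the scalar functions $(x-y)^*\circ(f-g) \in A$, choosing $t_0$ near where $|(x-y)^*(f(t)-g(t))|$ is maximized; the small slack is absorbed into the $\eps$ and removed at the end. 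Everything else (the Lipschitz-norm identity, the density appeal to Lemma~\ref{lem:radiusattain}, the final chain of inequalities) is routine once this is in place.
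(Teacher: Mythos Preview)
Your overall strategy coincides with the paper's: set $T := Q_S$, check $\|T\|_L = 1$, and then use Lemma~\ref{lem:radiusattain} to compute $\nu_L(T)$ over a set $Q \subset \Pi_{A(\Omega,X)}$ consisting of triples $(f,g,x^*\circ\delta_{t_0})$ with $t_0 \in \rho A$. The pointwise estimate $|x^*(S(f(t_0))-S(g(t_0)))|/\|f(t_0)-g(t_0)\|^2 \le \nu_L(S)$ is exactly what the paper uses at the end.

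The gap is in your density step. For a triple of the form $(f,g,\,c\,x^*\circ\delta_{t_0})$ with $x^* \in S_{X^*}$ to belong to $\Pi_{A(\Omega,X)}$, one needs $\|f(t_0)-g(t_0)\| = \|f-g\|_\infty$ \emph{exactly}; any strictly smaller value forces $c\,x^*\circ\delta_{t_0} \notin D(f-g)$. Your plan to pick $t_0$ where $\|f(t_0)-g(t_0)\|$ is merely close to $\|f-g\|_\infty$ and ``absorb the slack into $\eps$'' therefore does not produce elements of $\Pi$, so Lemma~\ref{lem:radiusattain} does not apply. (The reference to the computation in Theorem~\ref{th:lipnumaox} does not help: there $\Phi$ was constructed precisely so that $\|\Phi(x)-\Phi(y)\|$ is attained at $t_0$.) Since $\Omega$ is only Hausdorff and $\rho A$ is only assumed norming, for a generic pair $(f,g)$ there may be no $t_0 \in \rho A$ with exact attainment, so this is not a cosmetic issue.

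The paper closes this gap by perturbing the pair rather than the functional: given $(f,g)$ and $\delta>0$, pick $t_0 \in \rho A$ with $\|(f-g)(t_0)\|$ close to $\|f-g\|$, and use \cite[Theorem~4]{L} to find $h \in S_{A(\Omega,X)}$ that \emph{strongly peaks} at $t_0$ and satisfies $\|h - (f-g)/\|f-g\|\,\| < \delta/\|f-g\|$. Setting $f' = g + \|f-g\|h$ and $g'=g$ gives a pair within $\delta$ of $(f,g)$ for which $\|f'-g'\|$ is attained exactly at $t_0$, so $(f',g', x^*\circ\delta_{t_0}) \in \Pi_{A(\Omega,X)}$ with a suitable $x^* \in D(f'(t_0)-g'(t_0))$. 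This is the missing ingredient in your argument; once you insert it, your proof becomes the paper's proof.
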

		
		\begin{proof} 
		It suffices to show that $n_L(X) \geq n_L(A(\Omega, X))$. Let us fix $S \in \Lip(X, X)$ with $\|S\|_L = 1$. By assumption, $T:= Q_S$ is a member of $\Lip (A(\Omega, X), A(\Omega,X))$ and $\|T \|_L = 1$. 
		For every $t \in \rho A$, define the following sets:
		\small
		\begin{align*}
			&\mathcal{A}_{t} = \{ (f,g): \|f(t) - g(t)\|_X = \|f-g\| \} \subset A(\Omega, X) \times A(\Omega, X) ,\\
			&Q = \{(f,g, f^* \circ \delta_t) : (f,g) \in \mathcal{A}_{t},\, f^* \in D(f(t) - g(t)),\, t \in \rho A\} \subset A(\Omega, X) \times A(\Omega, X) \times A(\Omega, X)^*.
		\end{align*}
		\normalsize
		Note that $Q \subset \Pi_{A(\Omega, X)}$. We claim that $\pi(Q)$ is dense in $A(\Omega, X) \times A(\Omega, X)$. As a matter of fact, let $(f,g) \in A(\Omega, X) \times A(\Omega, X)$ and $\delta >0$ be given. Choose $t_0 \in \rho A$ such that 
		\[
		\|f-g\|^{-1} \| (f-g)(t_0) \| > 1 - \frac{\delta}{5\|f-g\|}.
		\] By \cite[Theorem 4]{L}, there exists $h \in A(\Omega, X)$ with $\| h \| = 1$ which strongly peaks at $t_0$ and satisfies that $\| h - \|f-g\|^{-1} (f-g) \| < \delta \|f-g\|^{-1}$. Write $\| f-g\| h = f' - g'$ where $f' = \|f-g\| h + g$ and $g'=g$. Then $(f' , g') \in \mathcal{A}_{t_0}$ and $\max \{\| f' - f \| , \|g'-g\| \} < \delta$, which completes the claim. 
		By Lemma \ref{lem:radiusattain} we have
		\[
		\nu_L(T) = \sup\left\{\frac{|(f^* \circ \delta_t)(Tf - Tg)|}{\|f-g\|^2}: (f, g, f^* \circ \delta_t) \in Q\right\}.
		\]
		Then from the definitions of $\mathcal{A}_t$ and $Q$, for given $\eps > 0$, there exists $(f,g, f^*\circ\delta_t) \in Q$ such that 
		\begin{eqnarray*}
		n_L(T) - \eps \leq \nu_L(T) - \eps < \frac{|(f^* \circ \delta_t)(Tf- Tg)|}{\|f - g\|^2} &=& \frac{|f^*((S(f(t))- S(g(t)))|}{\|f (t) - g(t) \|^2}\\
		&\leq&\nu_L(S).
		\end{eqnarray*}
		Therefore, $n_L(A(\Omega, X)) \leq n_L(X)$.
		\end{proof}
		
		\begin{Corollary}
		Let $\Omega$ be a Hausdorff space, $X$ be a Banach space, and $A$ be the base algebra of $C_b (\Omega, X)$. Suppose that the set $\rho A$ of strong peak points is a norming subset of $\Omega$. Then $ n_L(X) = n_L(C_b (\Omega, X)) $.
		\end{Corollary}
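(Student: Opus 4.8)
The plan is to recognize $C_b(\Omega,X)$ as a particular instance of the vector-valued function space $A(\Omega,X)$ and then to apply Theorem \ref{th:lipnumaox_conv} verbatim. First I would check that $C_b(\Omega,X)$ satisfies the three defining properties (i)--(iii) of a space of the form $A(\Omega,X)$. Its base algebra is $A=\{x^*\circ f : f\in C_b(\Omega,X),\ x^*\in X^*\}$, and given any $\varphi\in C_b(\Omega)$ one picks $x\in S_X$ and $x^*\in S_{X^*}$ with $x^*(x)=1$, so that $\varphi\otimes x\in C_b(\Omega,X)$ and $x^*\circ(\varphi\otimes x)=\varphi$; hence $A=C_b(\Omega)$, which is a function algebra (it separates points of $\Omega$ and contains the constants). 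Property (ii) holds since $C_b(\Omega)\otimes X\subset C_b(\Omega,X)$, and property (iii) holds because the product of a bounded continuous scalar function and a bounded continuous $X$-valued function is again bounded and continuous. The standing hypothesis that $\rho A$ is a norming subset of $\Omega$ is exactly the one imposed in Theorem \ref{th:lipnumaox_conv}.

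Next I would verify the additional hypothesis of Theorem \ref{th:lipnumaox_conv}, namely that for every $S\in\Lip(X,X)$ the map $Q_S$ sends $C_b(\Omega,X)$ into itself. This is routine: for $f\in C_b(\Omega,X)$ the composition $Q_S(f)=S\circ f$ is continuous because $S$ is Lipschitz, hence continuous, and it is bounded because $\|S(f(t))\|\le\|S\|_L\,\|f(t)\|\le\|S\|_L\,\|f\|_\infty$, using $S(0)=0$. Thus $Q_S(f)\in C_b(\Omega,X)$, so the range of $Q_S$ is contained in $C_b(\Omega,X)$.

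Having checked both requirements, Theorem \ref{th:lipnumaox_conv} applies with $A(\Omega,X)=C_b(\Omega,X)$ and yields $n_L(X)=n_L(C_b(\Omega,X))$. I do not expect any real obstacle here; the only mildly delicate point is confirming that the base algebra of $C_b(\Omega,X)$ coincides with $C_b(\Omega)$ (so that $\rho A$ really denotes the strong peak points of $C_b(\Omega)$) and that $C_b(\Omega)$ separates the points of $\Omega$, which is implicitly guaranteed by the assumption that $\rho A$ is norming.
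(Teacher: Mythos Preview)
Your proposal is correct and is exactly the argument the paper intends: the corollary is stated immediately after Theorem \ref{th:lipnumaox_conv} with no proof, precisely because $C_b(\Omega,X)$ is a particular instance of $A(\Omega,X)$ for which the extra hypothesis on $Q_S$ is trivially satisfied (composition of a Lipschitz map with a bounded continuous $X$-valued function is again bounded and continuous). One small caveat: your final remark that the assumption ``$\rho A$ is norming'' implicitly guarantees that $C_b(\Omega)$ separates points is not really justified---norming does not by itself force separation---but this is a technicality about the ambient framework rather than a gap in your deduction of the corollary from the theorem.
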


		For a Banach space $Y$ and the set $\Omega = B_Y$, consider the operator $Q_S$ as in the hypothesis of Theorem \ref{th:lipnumaox_conv}. Even if $f \in A(B_Y, X)$ is holomorphic on the interior $B_Y^\circ$ of $B_Y$, we mention that $Q_S (f)$ may not be holomorphic on $B_Y^\circ$. For this reason, we do not know if the Lipschitz numerical indices of $X$-valued extension of disk algebras coincide with that of $X$. 
		
			\begin{question}
Let $X$ and $Y$ be Banach spaces. Consider the following $X$-valued disk algebra $\mathcal{A}$ which is either  
\begin{align*}
&\mathcal{A}_b (B_Y, X) = \{ f : B_Y \rightarrow X : f \text{ is holomorphic on } B_Y^\circ \text{ and continuous on } B_Y \}, \text{ or} \\
&\mathcal{A}_u (B_Y, X) = \{ f \in \mathcal{A}_b (B_Y, X) : f \text{ is uniformly continuous on } B_Y \}. 
\end{align*}
Is it true that $n_L (X) = n_L (\mathcal{A} )$? 
\end{question}
For classical numerical index in this line, we refer to \cite[Corollary 8]{L}.

		\subsection{Lipschitz numerical index of $A(K, X)$}\label{subsec:A(K,X)}
		 Now let us consider the case $\Omega = K$. Here we use the strong boundary points instead of the strong peak points. A point $t_0 \in K$ is said to be a {\it strong boundary point} for the uniform algebra $A = A(K)$ if for every open subset $U \subset K$ containing $t_0$, there exists $f \in A$ such that $|f(t_0)| = 1$ and $\sup_{K \setminus U}|f(t)| < 1$. 
		
		%A subset $S \subset X$ is said to be a {\it boundary} if for each $f \in A$, there exists an element $t \in S$ such that $|f(t)| = \|f\|_\infty$. It is well-known that the set of strong boundary points is a boundary \cite[Proposition 4.3.3]{D} and coincides with the Choquet boundary $\Gamma_0$ of $A$ \cite[Theorem 4.3.5]{D}.
		
		To study the Lipschitz numerical index of $A(K, X)$, we shall prove a stricter version of a Urysohn-type lemma provided in \cite[Lemma 2.5]{CGK} which was used to study the Bishop-Phelps-Bollob\'as property of Asplund operators with range in a uniform algebra \cite{CGK} and the Daugavet and diameter two properties in $A(K, X)$ \cite{LT}. 
		 First we recall the following auxillary lemma.
		
		\begin{Lemma}\cite[Lemma 2.2]{CGK}\label{lem:uniflim1}
			Let $A \subset C(K)$ be a uniform algebra, $M \subset \mathbb{C}$ and $g:M \rightarrow \mathbb{C}$ a function that is the uniform limit of a sequence of complex polynomials restricted to $M$. For every $f \in A$ with $f(K) \subset M$ the following statements hold true:
			\begin{enumerate}
					\setlength\itemsep{0.3em}
				\item[\textup{(i)}] If $A$ is unital, then $g \circ f \in A$.
				\item[\textup{(ii)}] If $A$ is non-unital, $0 \in M$ and $g(0) = 0$, then $g \circ f \in A$. 
			\end{enumerate}
		
		\end{Lemma}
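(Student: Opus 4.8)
The plan is to establish both parts by a single two-step argument: first reduce to complex polynomials composed with $f$, then pass to a uniform limit using that $A$ is norm-closed in $C(K)$ together with the hypothesis $f(K)\subseteq M$. So I would fix a sequence $(p_n)$ of complex polynomials whose restrictions $p_n|_M$ converge to $g$ uniformly on $M$, and keep in mind throughout that $A$, being a uniform algebra, is a closed linear subspace of $C(K)$ that is stable under multiplication.

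For part (i), assuming $A$ unital, I would write $p_n(z)=\sum_{k=0}^{d_n}a_k^{(n)}z^k$ and note that each power $f^k$ lies in $A$ (for $k=0$ because $1\in A$, for $k\ge 1$ because $A$ is closed under products), so $p_n\circ f=\sum_{k=0}^{d_n}a_k^{(n)}f^k\in A$. Since $f(K)\subseteq M$, one has
\[
\|p_n\circ f-g\circ f\|_\infty=\sup_{t\in K}\bigl|p_n(f(t))-g(f(t))\bigr|\le\sup_{w\in M}|p_n(w)-g(w)|\longrightarrow 0
\]
as $n\to\infty$, and closedness of $A$ in $C(K)$ then forces $g\circ f\in A$.

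For part (ii) the only new point is that the constant term of $p_n$ need no longer belong to $A$. I would therefore replace $p_n$ by $\tilde p_n(z):=p_n(z)-p_n(0)$, which has vanishing constant term, hence $\tilde p_n(z)=\sum_{k=1}^{d_n}a_k^{(n)}z^k$ and $\tilde p_n\circ f=\sum_{k=1}^{d_n}a_k^{(n)}f^k\in A$, every power $f^k$ with $k\ge 1$ being a product of elements of $A$. Because $0\in M$ and $g(0)=0$, we get $p_n(0)=p_n(0)-g(0)\to 0$, so $\tilde p_n\to g$ uniformly on $M$ as well; running the estimate above with $\tilde p_n$ in place of $p_n$ and invoking closedness of $A$ again yields $g\circ f\in A$.

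The computations involved are routine; the only step that needs attention is the non-unital case, where one must renormalise the approximating polynomials to annihilate their constant terms before composing with $f$ — an operation that is legitimate precisely because $0\in M$ and $g(0)=0$, so that the discarded constants $p_n(0)$ tend to $0$ and uniform convergence on $M$ is preserved.
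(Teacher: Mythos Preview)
Your argument is correct and is the standard proof of this fact. Note, however, that the paper does not supply its own proof of this lemma: it is merely quoted from \cite[Lemma 2.2]{CGK} as an auxiliary tool for the subsequent Urysohn-type lemma, so there is no ``paper's proof'' to compare against. Your two-step approach --- showing $p_n\circ f\in A$ by the algebra structure and then passing to the limit via closedness of $A$ in $C(K)$, with the renormalisation $\tilde p_n=p_n-p_n(0)$ in the non-unital case --- is exactly how one proves this result, and the justification that $p_n(0)\to 0$ (using $0\in M$ and $g(0)=0$) is the key observation needed for part (ii).
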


		Even though the proof of the following lemma is almost identical to \cite[Lemma 2.5]{CGK}, we include the proof for completeness. This lemma will enable us to construct a function that ``peaks" at a given strong boundary point.
		
		\begin{Lemma}\label{lem:urysohnsb}
			Let $K$ be a compact Hausdorff space. If $t_0$ is a strong boundary point for a uniform algebra $A \subset C(K)$, then for every open subset $U \subset K$ containing $t_0$ and $\eps > 0$, there exists $\varphi \in A$ such that $\varphi(t_0) = \|\varphi\|_{\infty} = 1$, $\sup_{K \setminus U}|\varphi(t)| < \eps$ and 
			\begin{equation}\label{eq:stolz}
			|\varphi(t)| + (1- \eps)|1 - \varphi(t)| \leq 1
			\end{equation}
			for every $t \in K$. 
		\end{Lemma}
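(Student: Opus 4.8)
The plan is to imitate the proof of \cite[Lemma 2.5]{CGK}: build a single ``model'' function $g$ on the closed unit disc $\overline{\mathbb D}$ carrying the required geometric features, build a peaking function $f\in A$ at $t_0$ whose values lie in $\overline{\mathbb D}$, and set $\varphi:=g\circ f$; then Lemma \ref{lem:uniflim1} puts $\varphi$ back into $A$, while the features of $g$ pass to $\varphi$.

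For the model function I would work with the region
\[
\Omega_\eps:=\{w\in\mathbb C:\ |w|+(1-\eps)|1-w|\le 1\},
\]
a compact convex ``lens'' with $0$ in its interior, contained in $\overline{\mathbb D}$, and meeting $\partial\mathbb D$ only at the point $1$. The decisive feature is that this contact is \emph{non-tangential}: writing $w=1-u$ one computes $|w|+(1-\eps)|1-w|=1+|u|\bigl((1-\eps)-\cos(\arg u)\bigr)+O(|u|^2)$, so near $1$ the set $\Omega_\eps$ is, up to higher order, the cone $\{1-u:\ |\arg u|\le\arccos(1-\eps)\}$, whose opening angle is strictly less than $\pi$. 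I would then fix, for the given $\eps\in(0,1)$, a function $g=g_\eps$ in the disc algebra with $g(0)=0$, $g(1)=1$ and $g(\overline{\mathbb D})\subseteq\Omega_\eps$; concretely one may take $g(z)=1-(1-z)^{\beta}$ with the principal branch (well defined and continuous on $\overline{\mathbb D}$ because $1-z\in\overline{D(1,1)}\subseteq\{\re\ge 0\}$) for a sufficiently small exponent $\beta=\beta(\eps)\in(0,1)$; alternatively, more conceptually, a Riemann map of $\mathbb D$ onto $\operatorname{int}\Omega_\eps$ sending $0$ to $0$, which extends to a homeomorphism of $\overline{\mathbb D}$ onto $\Omega_\eps$ by Carath\'eodory's theorem since $\partial\Omega_\eps$ is a Jordan curve, precomposed with a rotation sending some boundary point to $1$. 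Being in the disc algebra, $g$ is a uniform limit of polynomials on $\overline{\mathbb D}$; and since $g$ is continuous with $g(0)=0$, we may also fix $\delta=\delta(\eps)>0$ with $|g(z)|<\eps$ whenever $|z|\le\delta$.

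Next, because $t_0$ is a strong boundary point it is a weak peak point: for the given $U$ and the above $\delta$ there is $f\in A$ with $f(t_0)=1$, $\|f\|_\infty=1$ and $\sup_{t\in K\setminus U}|f(t)|<\delta$. (Upgrading the bare definition — which only yields $|f(t_0)|=1$ and $\sup_{K\setminus U}|f|<1$, with no control on $\|f\|_\infty$ — to this peaking statement is the classical description of the Choquet/strong boundary points of a uniform algebra, carried out in the preliminaries of \cite{CGK}.) Put $\varphi:=g\circ f$. Then $f(K)\subseteq\overline{\mathbb D}$, so Lemma \ref{lem:uniflim1} (with $M=\overline{\mathbb D}$; in the non-unital case using also $0\in M$ and $g(0)=0$) gives $\varphi\in A$. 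Now read off the conclusions: $\varphi(t_0)=g(f(t_0))=g(1)=1$; $|\varphi(t)|=|g(f(t))|\le 1$ for every $t$ since $g(\overline{\mathbb D})\subseteq\Omega_\eps\subseteq\overline{\mathbb D}$, so together with $\varphi(t_0)=1$ we get $\|\varphi\|_\infty=1$; for $t\in K\setminus U$ we have $|f(t)|<\delta$ and hence $|\varphi(t)|<\eps$, and as $K\setminus U$ is compact the supremum is attained, so $\sup_{K\setminus U}|\varphi|<\eps$; and for every $t$, $\varphi(t)=g(f(t))\in\Omega_\eps$, which is exactly \eqref{eq:stolz}.

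The step I expect to be the real work is the construction of $g$: any affine or M\"obius model sends $\overline{\mathbb D}$ onto a disc internally tangent to $\partial\mathbb D$ at $1$, which violates \eqref{eq:stolz} arbitrarily close to $1$, so the non-tangential approach provided by the fractional power $(1-z)^{\beta}$ (or by the conformal map) is genuinely needed — together with the elementary but slightly fiddly estimate that a small enough $\beta$ forces $g(\overline{\mathbb D})\subseteq\Omega_\eps$ globally, not merely near $1$. A secondary point requiring care is the passage from the literal definition of a strong boundary point to the weak peak function used above.
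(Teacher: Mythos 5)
Your proposal is correct and follows essentially the same route as the paper: compose a peaking function at $t_0$ (with modulus pushed below a suitable $\delta$ off $U$) with a disc-algebra map of $\overline{\mathbb{D}}$ into the Stolz region fixing $0$ and sending $1\mapsto 1$, then apply Lemma \ref{lem:uniflim1}. The paper realizes the model map exactly as in your second alternative (a conformal map onto the Stolz region extended to a homeomorphism of the closed discs, via Rudin's Theorem 14.19), so your explicit choice $1-(1-z)^{\beta}$ with $\beta$ small is just a harmless variant of the same idea.
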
 
		
		\begin{proof}
			For a given $\eps > 0$, we denote the Stolz region by $St_{\eps} = \{z \in \mathbb{C}: |z| + (1 - \eps) |1 - z| \leq 1)\}$. In view of Theorem 14.19 in \cite{R}, we know that there exists a homeomorphism $\psi: \overline{\mathbb{D}} \rightarrow St_{\eps}$ such that
			\begin{enumerate}
					\setlength\itemsep{0.3em}
				\item[\textup{(i)}] $\psi$ restricted to $\mathbb{D}$ is a conformal mapping onto the interior of $St_{\eps}$, 
				\item[\textup{(ii)}] $\psi(0) = 0$, and
				\item[\textup{(iii)}] $\psi(1) = 1$.
			\end{enumerate}
			Let $\delta \in (0, 1)$ such that $\delta \mathbb{D} \subset \psi^{-1}(\eps^2 \mathbb{D})$ and $t_0$ be a strong boundary point of $A$. Then for every open subset $U \subset K$ containing $t_0$, there exists $f_U \in A$ such that $f_U (t_0) = 1$ and $\sup_{K \setminus U}|f_U (t)| < 1$. Since $A$ is closed under multiplication, by taking suitable powers of $f_U$, we may assume that $\sup_{K \setminus U}|f_U (t)| < \delta$.
			
			Now define a function $\varphi = \psi \circ f_U$. Note that $\psi$ is the uniform limit of a sequence of complex polynomials on $\overline{\mathbb{D}}$. Thus, in view of Lemma \ref{lem:uniflim1}, we see that $\varphi \in A$. Furthermore, the image of $\varphi$ is contained in $St_{\eps}$ and so (\ref{eq:stolz}) is satisfied for every $t \in K$. Notice that $\varphi(t_0) = \psi \circ f_U (t_0) = \psi(1) = 1$ and $\varphi(K \setminus U) = \psi \circ f_U (K \setminus U) \subset \psi (\delta \mathbb{D}) \subset \eps \mathbb{D}$. The proof is finished. 
		\end{proof}
		
		With the aid of Lemma \ref{lem:urysohnsb}, we can prove the following density result for the space $A(K, X)$, which is inspired by the corresponding result for the space $A(\Omega, X)$ in \cite{L}.
		
		\begin{Proposition}\label{prop:hanju}
		Let $K$ be a compact Hausdorff space and $X$ be a Banach space. Given $\eps >0$, if a norm-one element $f \in A(K, X)$ satisfies $\| f(t_0) \| > 1 - \frac{\eps}{5}$ for some strong boundary point $t_0$ for $A$, then there exists a norm-one element $g \in A(K,X)$ such that $\| g(t_0) \| = 1$ and $\|f - g \| < \eps$. 
		\end{Proposition}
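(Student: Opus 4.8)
The plan is to mimic the classical construction of peaking functions in uniform algebras, now in the vector-valued setting, using the Urysohn-type Lemma \ref{lem:urysohnsb} to localize near $t_0$. Starting from a norm-one $f \in A(K,X)$ with $\|f(t_0)\| > 1 - \eps/5$, I would first choose a small open neighborhood $U$ of $t_0$ on which $f$ varies little, say $\|f(t) - f(t_0)\| < \eps/5$ for all $t \in U$; this is possible since $f$ is continuous. Then Lemma \ref{lem:urysohnsb} provides $\varphi \in A$ with $\varphi(t_0) = \|\varphi\|_\infty = 1$, $\sup_{K\setminus U}|\varphi(t)| < \eps$ (actually one wants a bit smaller, on the order of $\eps/5$, but this is harmless by rescaling), and the Stolz-region estimate $|\varphi(t)| + (1-\eps)|1 - \varphi(t)| \leq 1$ for all $t \in K$.

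Next I would define the candidate $g$ by a convex-combination-type formula that pins down the value at $t_0$ while keeping the norm controlled. The natural choice is
\[
g(t) = \varphi(t)\, \frac{f(t_0)}{\|f(t_0)\|} + (1-\eps)\,(1-\varphi(t))\, f(t),
\]
which lies in $A(K,X)$ by properties (ii) and (iii) of $A(\Omega,X)$ (namely $\varphi \in A$ and $A \cdot A(K,X) \subseteq A(K,X)$, together with $A \otimes X \subseteq A(K,X)$). At $t = t_0$ we get $g(t_0) = f(t_0)/\|f(t_0)\|$, which has norm exactly $1$. For the norm bound, the Stolz estimate gives $\|g(t)\| \leq |\varphi(t)| + (1-\eps)|1-\varphi(t)| \leq 1$ for all $t$, so $\|g\| = 1$ (the value at $t_0$ already forces $\|g\| \geq 1$). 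Finally, for the estimate $\|f - g\| < \eps$, I would split into the two cases $t \in U$ and $t \in K \setminus U$: on $U$, the difference $f(t) - g(t)$ is small because $\varphi$ is close to $1$ there only in a weak sense — actually here one uses that $f(t)$ is close to $f(t_0)$, close to $f(t_0)/\|f(t_0)\|$ (since $\|f(t_0)\|$ is close to $1$), and the $(1-\eps)$ and $(1-\varphi)$ factors each contribute $O(\eps)$; on $K\setminus U$, $\varphi(t)$ is small ($<\eps$ or $\eps/5$), so $g(t) \approx (1-\eps)f(t)$ which is within $O(\eps)$ of $f(t)$ since $\|f\| = 1$. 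Collecting the constants and choosing the neighborhood and the peaking tolerance appropriately, the total is below $\eps$.

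The main obstacle I anticipate is the bookkeeping of constants so that the final bound is genuinely $< \eps$ rather than $C\eps$: one must run Lemma \ref{lem:urysohnsb} with a parameter $\eps' = \eps/5$ (or similar), shrink $U$ so that $f$ oscillates by at most $\eps/5$ on it, and use $\|f(t_0)\| > 1 - \eps/5$, then verify that in each of the two cases the accumulated error is at most $\eps$. A secondary subtlety is confirming that $g$ genuinely belongs to $A(K,X)$ and not merely to $C_b(K,X)$; this is exactly where the algebra axioms (i)--(iii) for $A(\Omega,X)$ are needed, since $g$ is built from $\varphi \in A$ acting on elements of $A(K,X)$ together with the constant-in-$t$ vector $f(t_0)/\|f(t_0)\| \in X$ (which lies in $A(K,X)$ via $A \otimes X$). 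Once these two points are handled, the rest is the routine two-case triangle-inequality estimate sketched above, and the proposition follows.
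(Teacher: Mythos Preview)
Your proposal is correct and follows essentially the same route as the paper: choose a small neighborhood $U$ of $t_0$ on which $f$ is close to $f(t_0)/\|f(t_0)\|$, apply Lemma~\ref{lem:urysohnsb} with parameter $\eps/5$, and set $g(t)=\varphi(t)\,f(t_0)/\|f(t_0)\| + (1-\eps/5)(1-\varphi(t))f(t)$. The only cosmetic differences are that the paper takes $U=\{t:\|f(t)-f(t_0)/\|f(t_0)\|\|<\eps/5\}$ directly and uses the factor $(1-\eps/5)$ (not $(1-\eps)$) in the definition of $g$, consistently with the Stolz estimate obtained from the lemma at level $\eps/5$; once you align these constants as you yourself anticipate, the two-case estimate goes through exactly as you sketch.
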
 
		
		\begin{proof}
		Suppose that $f \in A(K, X)$ satisfies $\| f(t_0) \| > 1 - \frac{\eps}{5}$ for some strong boundary point $t_0$ for $A$. Consider $U =\left\{ t \in K : \left\| f(t) - \frac{ f(t_0)}{\|f(t_0)\|} \right\| < \frac{\eps}{5} \right\}$ which is an open neighborhood of $t_0$. By Lemma \ref{lem:urysohnsb}, there exists $\varphi \in A$ such that $\varphi (t_0) = \| \varphi \| = 1$, $\sup_{K \setminus U} |\varphi (t)| < \frac{\eps}{5}$ and 
		\[
		|\varphi(t)| + \left(1- \frac{\eps}{5} \right)|1 - \varphi(t)| \leq 1
		\]
		for every $t \in K$. 
		Define $g \in A(K, X)$ by 
		\[
		g(t) := \frac{ f(t_0)}{\|f(t_0)\|} \varphi (t) + \left(1- \frac{\eps}{5} \right)(1-\varphi (t)) f(t) 
		\]
		for every $t \in K$. Then the same calculation in the proof of \cite[Theorem 4]{L} shows that $\| f- g \| < \eps$. It is clear that $\| g(t_0) \| = 1$. 
		\end{proof} 
		
		Now we are ready to compare the Lipschitz numerical indices of $A(K, X)$ and $X$.

		\begin{Theorem}
			 Let $K$ be a compact Hausdorff space and $X$ be a Banach space. Then $n_L(X) \leq n_L(A(K, X))$.
		\end{Theorem}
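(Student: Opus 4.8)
The plan is to run the argument of Theorem~\ref{th:lipnumaox} essentially verbatim, with strong boundary points in the role of strong peak points and with Lemma~\ref{lem:urysohnsb} replacing Lemma~\ref{lem:urysohnsp}. The first step is to dispose of the norming hypothesis that Theorem~\ref{th:lipnumaox} required. Write $\Gamma$ for the set of strong boundary points of the uniform algebra $A$; this is the Choquet boundary of $A$, and it is classical that it is a boundary for $A$, i.e. $\|u\|_\infty = \sup_{t\in\Gamma}|u(t)|$ for every $u\in A$. Since $\|f(t)\| = \sup_{x^*\in B_{X^*}}|(x^*\circ f)(t)|$ for $f\in A(K,X)$ and $x^*\circ f\in A$, interchanging the two suprema gives $\|f\| = \sup_{t\in\Gamma}\|f(t)\|$, so $\Gamma$ is automatically a norming subset of $K$ for $A(K,X)$; the extra hypothesis of Theorem~\ref{th:lipnumaox} therefore costs nothing here.

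Granting this, I would fix $T\in\Lip(A(K,X),A(K,X))$ with $\|T\|_L=1$, put $T_t(h)=(Th)(t)$ so that $\|T\|_L=\sup\{\|T_t\|_L:t\in\Gamma\}$, and for $\eps>0$ choose $f,g\in A(K,X)$ and $t_0\in\Gamma$ with $\|T_{t_0}f-T_{t_0}g\|>(1-\eps)\|f-g\|$. Setting $f_0=(f-g)/\|f-g\|$, I would form the sets $C(t_0)$ and $E_{f_0}$ exactly as in \eqref{eq:Ef0}, now extracting the peaking functions $\varphi\in A$ (with $\varphi(t_0)=1$, $|\varphi|$ small off a prescribed neighbourhood, and $|\varphi(t)|+(1-\eps)|1-\varphi(t)|\le1$ on $K$) from Lemma~\ref{lem:urysohnsb} rather than Lemma~\ref{lem:urysohnsp}. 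The two-case computations (on $U$ and on $K\setminus U$) then go through word for word and give $f_0\in\overline{J(E_{f_0})}$ and $E_{f_0}\subset B_{A(K,X)}$; Lemma~\ref{lem:specialh} produces $h\in A(K,X)$ with $h-f\in\frac{\|f-g\|}{2}E_{f_0}$ and $\|h-g\|\le(1+\eps)\frac{\|f-g\|}{2}$, whence $\|T_{t_0}h-T_{t_0}f\|\ge(1-3\eps)\|h-f\|$.

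Continuing, with $x_0=h(t_0)$ and $y_0=f(t_0)$ (so $\|x_0-y_0\|=\|h-f\|$), Lemma~\ref{lem:Mlip} gives a norm-one $F\in\Lip(X,A(K,X))$ with $F(x_0)=h$ and $F(y_0)=f$; a second application of Lemma~\ref{lem:urysohnsb} on the neighbourhood $\tilde U$ of $t_0$ from \eqref{eq:setU}--\eqref{eq:urysohn2} yields $\varphi\in A$ such that $\Phi(x):=(1-\eta)(1-\varphi)F(x)+\varphi x$ satisfies $\|T_{t_0}(\Phi(x_0))-T_{t_0}(\Phi(y_0))\|\ge(1-7\eps)\|x_0-y_0\|$. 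Defining $S(x):=T_{t_0}(\Phi(x))-T_{t_0}(\Phi(0))$ gives $S\in\Lip(X,X)$ with $1-7\eps\le\|S\|_L\le1$, and, since $(x-y)^*\circ\delta_{t_0}\in D(\Phi(x)-\Phi(y))$ whenever $(x-y)^*\in D(x-y)$ and $(x-y)^*\circ\delta_{t_0}(T\Phi(x)-T\Phi(y))=(x-y)^*(Sx-Sy)$, one gets $\nu_L(T)\ge\nu_L(S)\ge(1-7\eps)n_L(X)$; letting $\eps\to0$ gives $n_L(A(K,X))\ge n_L(X)$.

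I do not expect a genuinely new obstacle. The technical heart, namely replacing the use of Urysohn's lemma (available for $C(K,X)$) by a Urysohn-type statement valid in an arbitrary uniform algebra, has already been isolated as Lemma~\ref{lem:urysohnsb}, and the one extra assumption that Theorem~\ref{th:lipnumaox} needed, a norming set of peak points, is here free of charge because the Choquet boundary of a uniform algebra is always norming. The step I would watch most carefully is the pair of inner estimates splitting into the cases $t\in U$ and $t\in K\setminus U$, since those are the only parts that must be re-checked rather than quoted; but they are identical to the ones appearing in the proof of Theorem~\ref{th:lipnumaox}.
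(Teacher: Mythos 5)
Your proposal is correct and follows essentially the same route as the paper: the paper likewise observes that the strong boundary points form a norming subset (citing \cite[Proposition 4.3.3]{D}, which is the boundary fact you argue directly) and then adapts the proof of Theorem \ref{th:lipnumaox} verbatim with Lemma \ref{lem:urysohnsb} in place of Lemma \ref{lem:urysohnsp}.
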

		
		\begin{proof} 
		Note that the set of strong boundary points for $A$ is a norming subset of $\Omega$ \cite[Proposition 4.3.3]{D}. We can adapt the proof of Theorem \ref{th:lipnumaox} by replacing $\rho A$ with the set of strong boundary points and Lemma \ref{lem:urysohnsp} with Lemma \ref{lem:urysohnsb}. 
		\end{proof} 
		
For the following result analogous to Theorem \ref{th:lipnumaox_conv}, we omit the detail of its proof since it is identical to the proof of Theorem \ref{th:lipnumaox_conv} if we use Proposition \ref{prop:hanju} instead of \cite[Theorem 4]{L}.

		\begin{Theorem}\label{Thm:CK_inverse}
		 Let $K$ be a compact Hausdorff space, $X$ be a Banach space. If the range of the mapping $Q_S : A(K, X) \rightarrow C (K, X)$ given by 
		\[ 
		Q_S (f) (t) = S ( f(t)) \quad (f \in A(K,X),\, t \in K), 
		\]
		for every $S \in \Lip (X, X)$, is contained in $A(K, X)$, then $ n_L(X) = n_L(A(K, X)) $.
	\end{Theorem}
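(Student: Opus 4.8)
The plan is to mirror the proof of Theorem~\ref{th:lipnumaox_conv} verbatim, substituting the uniform-algebra Urysohn construction for the function-algebra one: everywhere the earlier argument invoked \cite[Theorem 4]{L}, we instead invoke Proposition~\ref{prop:hanju}, which is exactly its $A(K,X)$-analogue and rests on Lemma~\ref{lem:urysohnsb}. Since the set of strong boundary points of a uniform algebra is automatically a norming subset of $K$ by \cite[Proposition 4.3.3]{D}, the inequality $n_L(X)\le n_L(A(K,X))$ is already available from the theorem stated immediately above, so the whole task reduces to establishing $n_L(X)\ge n_L(A(K,X))$.

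First I would fix $S\in\Lip(X,X)$ with $\|S\|_L=1$ and put $T:=Q_S$, which by hypothesis is a Lipschitz self-map of $A(K,X)$ (and $T(0)=0$ since $S(0)=0$). From $\|Tf-Tg\|=\sup_{t\in K}\|S(f(t))-S(g(t))\|\le\|f-g\|$ we get $\|T\|_L\le1$, and testing on constant functions $f\equiv x$, $g\equiv y$ --- which belong to $A(K,X)$ since $A$ contains the constants --- gives $\|T\|_L\ge\|S\|_L=1$. Thus $T\in S_{\Lip(A(K,X),A(K,X))}$, so $\nu_L(T)\ge n_L(A(K,X))$.

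Next, for each strong boundary point $t$ of $A$ I would introduce $\mathcal A_t=\{(f,g):\|f(t)-g(t)\|_X=\|f-g\|\}$ together with
\[
Q=\bigl\{(f,g,f^*\circ\delta_t):(f,g)\in\mathcal A_t,\ f^*\in D(f(t)-g(t)),\ t\text{ a strong boundary point for }A\bigr\},
\]
and check that $Q\subset\Pi_{A(K,X)}$ (the identity $\|f-g\|=\|f(t)-g(t)\|$ forces $f^*\circ\delta_t\in D(f-g)$). The key step is density of $\pi(Q)$ in $A(K,X)\times A(K,X)$: given $f\ne g$ and $\delta>0$, choose a strong boundary point $t_0$ with $\|(f-g)(t_0)\|>\bigl(1-\tfrac{\delta}{5\|f-g\|}\bigr)\|f-g\|$, apply Proposition~\ref{prop:hanju} to the norm-one function $\|f-g\|^{-1}(f-g)$ to obtain a norm-one $h\in A(K,X)$ with $\|h(t_0)\|=1$ and $\|h-\|f-g\|^{-1}(f-g)\|<\delta\|f-g\|^{-1}$, and set $f'=\|f-g\|h+g$, $g'=g$; then $(f',g')\in\mathcal A_{t_0}$ and $\max\{\|f'-f\|,\|g'-g\|\}<\delta$.

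Finally, Lemma~\ref{lem:radiusattain} yields $\nu_L(T)=\sup\{|(f^*\circ\delta_t)(Tf-Tg)|/\|f-g\|^2:(f,g,f^*\circ\delta_t)\in Q\}$, and for any admissible triple, since $(f,g)\in\mathcal A_t$ we have $\|f-g\|=\|f(t)-g(t)\|$ and $(f(t),g(t),f^*)\in\Pi_X$, hence
\[
\frac{|(f^*\circ\delta_t)(Tf-Tg)|}{\|f-g\|^2}=\frac{|f^*\bigl(S(f(t))-S(g(t))\bigr)|}{\|f(t)-g(t)\|^2}\le\nu_L(S).
\]
Therefore $n_L(A(K,X))\le\nu_L(T)\le\nu_L(S)$ for every $S\in S_{\Lip(X,X)}$, and taking the infimum over such $S$ gives $n_L(A(K,X))\le n_L(X)$, which together with the reverse inequality proves the claim. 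I expect the only non-routine point to be the density of $\pi(Q)$; once Proposition~\ref{prop:hanju} is in hand this is a short computation, and the rest is bookkeeping identical to the proof of Theorem~\ref{th:lipnumaox_conv}.
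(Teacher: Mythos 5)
Your proposal is correct and follows exactly the route the paper intends: the authors omit the proof precisely because it is the proof of Theorem~\ref{th:lipnumaox_conv} with Proposition~\ref{prop:hanju} replacing \cite[Theorem 4]{L}, together with the norming property of the strong boundary points and the preceding theorem for the inequality $n_L(X)\le n_L(A(K,X))$. Your added details (the constant-function test for $\|Q_S\|_L=1$ and the verification that $f^*\circ\delta_t\in D(f-g)$) are accurate and only make explicit what the paper leaves implicit.
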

		
	As a particular case, we have the following result which already appeared in \cite[Theorem 4.5]{WHT}. 
	
	\begin{Corollary}
	 Let $K$ be a compact Hausdorff space, $X$ be a Banach space. Then $ n_L(X) = n_L(C(K, X))$.
	\end{Corollary}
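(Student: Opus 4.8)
The plan is to obtain this corollary as an immediate specialization of Theorem \ref{Thm:CK_inverse} to the case $A(K,X) = C(K,X)$. First I would record that here the base algebra is $A = C(K)$, for which \emph{every} point of $K$ is a strong boundary point (a direct consequence of Urysohn's lemma, which produces the required peaking functions). In particular the set of strong boundary points is all of $K$, hence trivially a norming subset, so the standing hypothesis used for the inequality $n_L(X) \le n_L(C(K,X))$ (via the $A(K,X)$-analogue of Theorem \ref{th:lipnumaox}) is automatically met.

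Next, the one genuine point to check is the range condition on $Q_S$ appearing in Theorem \ref{Thm:CK_inverse}: that $Q_S$ maps $C(K,X)$ into $C(K,X)$ for every $S \in \Lip(X,X)$. This is where continuity does all the work: for $f \in C(K,X)$ and $S \in \Lip(X,X)$, the map $S$ is (uniformly) continuous, so $t \mapsto S(f(t))$ is a composition of continuous maps and therefore continuous, i.e. $Q_S(f) \in C(K,X)$. There is no further algebraic or holomorphy-type constraint to verify, since membership in $C(K,X)$ is exactly continuity.

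With both hypotheses of Theorem \ref{Thm:CK_inverse} in place for $A(K,X)=C(K,X)$, that theorem yields $n_L(X) = n_L(C(K,X))$ directly, recovering \cite[Theorem 4.5]{WHT}. The only ``obstacle'' is thus the range check for $Q_S$, which is immediate; by contrast, as noted in the remark preceding the final Question, the analogous composition with a Lipschitz map need not preserve holomorphy, which is precisely why the corresponding statement for $X$-valued disk algebras remains open.
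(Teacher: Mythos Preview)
Your proposal is correct and matches the paper's approach exactly: the corollary is stated as an immediate specialization of Theorem~\ref{Thm:CK_inverse} to $A(K,X)=C(K,X)$, with the range condition on $Q_S$ holding because the composition $t\mapsto S(f(t))$ of continuous maps is continuous. Your additional remark that every point of $K$ is a strong boundary point for $C(K)$ (via Urysohn's lemma) simply makes explicit the standing hypothesis that is automatic here.
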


If we instead consider the Banach space $C_w (K, X)$ of all weakly continuous functions from $K$ into $X$ equipped with the supremum norm, the main difficulty in comparing the Lipschitz numerical index of $C_w (K, X)$ with that of $X$ is to construct a suitable function $\Phi : X \rightarrow C_w (K, X)$ by using a Urysohn-type lemma as in the proof of Theorem \ref{th:lipnumaox}. Moreover, we cannot always guarantee that $Q_S (f)$ is an element of $C_w (K, X)$ for $S \in \Lip (X, X)$ and $f \in C_w (K,X)$ where $Q_S$ is the operator given in Theorem \ref{Thm:CK_inverse}. For similar reasons, we do not know if the Lipschitz numerical index of $C_{w^*} (K, X^*)$ coincides with that of $X^*$ (or of $X$), where $C_{w^*} (K, X^*)$ is the Banach space of weak-star continuous functions from $K$ to $X^*$. 
	
	\begin{question}
Let $K$ be a compact Hausdorff space, $X$ be a Banach space. Is it possible to estimate the Lipschitz numerical index of $C_w (K, X)$ or of $C_{w^*} (K, X^*)$? 
\end{question}
 
In the case of the classical numerical index, it is observed in \cite{LMM} that $n (X) = n (C_w (K, X))$ and $n(X) \geq n (C_{w^*} (K, X^*)$. In the same paper, it is also shown that if $X$ is Asplund or $K$ has a dense subset of isolated points, then $n(X^*) \leq n(C_{w^*} (K, X^*))$.

\subsection{Inheritance of the Daugavet property}\label{subsection:DP}
Here we study the inheritance of the Daugavet property from $X$ to the vector-valued function space $A(\Omega,X)$ in this subsection. The tools used in Theorem \ref{th:lipnumaox} and the Lipschitz Daugavet center turn out to be the main ingredients to show this inheritance result. 

\begin{Theorem}
Let $\Omega$ be a Hausdorff space, $X$ be a Banach space, and $A$ be the base algebra of $A(\Omega, X)$. Suppose that the set $\rho A$ of strong peak points is a norming subset of $\Omega$. If $X$ has the Daugavet property, then $A(\Omega,X)$ also has the Daugavet property.
\end{Theorem}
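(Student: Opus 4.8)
The strategy is to prove that the identity operator $\mathrm{Id}_{A(\Omega,X)}$ is a Daugavet center; as $\|\mathrm{Id}_{A(\Omega,X)}\|=1$, this is precisely the assertion that $A(\Omega,X)$ has the Daugavet property. By Proposition \ref{prop:center1} applied with $G=\mathrm{Id}_{A(\Omega,X)}$, it is enough to show: for every $g_0\in S_{A(\Omega,X)}$, every $\phi\in\Lip(A(\Omega,X),\R)$ (which we normalize so that $\|\phi\|_L=1$), and every $\eps>0$, there is a Lip-slice element $u\in S_L(\phi,\eps)$ with $\|u+g_0\|>2-\eps$. The plan is to localize the problem to $X$ at a single strong peak point, apply the Daugavet property of $X$ there, and lift the resulting direction back to $A(\Omega,X)$.

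I would first use that $\rho A$ is norming to fix $t_0\in\rho A$ with $\|g_0(t_0)\|_X>1-\eps$, and pick $f\neq g$ in $A(\Omega,X)$ with $\frac{\phi(f)-\phi(g)}{\|f-g\|}>1-\eps$. The crucial point is that, unlike in the proof of Theorem \ref{th:lipnumaox} where $t_0$ had to be a point at which $\|T_t\|_L$ was essentially attained, here $t_0$ is dictated solely by $g_0$ and is \emph{decoupled} from $f$ and $g$; this is legitimate because the peaking construction of Theorem \ref{th:lipnumaox} is valid at any strong peak point. Then, with $f_0=\frac{f-g}{\|f-g\|}$, I would form the set $E_{f_0}$ as in \eqref{eq:Ef0} relative to $t_0$, check $f_0\in\overline{J(E_{f_0})}$ and $E_{f_0}\subseteq B_{A(\Omega,X)}$ exactly as there, and use Lemma \ref{lem:specialh} to produce $h\in A(\Omega,X)$ with $h-f\in\frac{\|f-g\|}{2}E_{f_0}$ and $\|h-g\|\le(1+\eps)\frac{\|f-g\|}{2}$. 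Since each element of $E_{f_0}$ has norm $1$ at $t_0$ and norm $\le 1$ elsewhere, $\|h-f\|=\frac{\|f-g\|}{2}=\|(h-f)(t_0)\|$, and the triangle inequality together with the bounds on $\phi(f)-\phi(g)$ and $\|h-g\|$ yields $|\phi(f)-\phi(h)|\ge(1-3\eps)\|h-f\|$. Putting $x_0=h(t_0)$, $y_0=f(t_0)$ (so $\|x_0-y_0\|=\|h-f\|$), Lemma \ref{lem:Mlip} gives a norm-one $F\in\Lip(X,A(\Omega,X))$ with $F(x_0)=h$ and $F(y_0)=f$, and Lemma \ref{lem:urysohnsp} gives $\varphi\in A$ with $\varphi(t_0)=1$, with $|\varphi|$ small off a small neighborhood of $t_0$, and satisfying the Stolz estimate. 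Exactly as in Theorem \ref{th:lipnumaox}, the map $\Phi(x):=(1-\eta)(1-\varphi)F(x)+\varphi x$ satisfies $\|\Phi\|_L\le 1$, $\Phi(x)(t_0)=x$ for every $x\in X$, and $\|\Phi(x_0)-h\|+\|\Phi(y_0)-f\|\le O(\eps)\|h-f\|$. Consequently $\psi:=\phi\circ\Phi-\phi(\Phi(0))\in\Lip(X,\R)$ has $\|\psi\|_L\le 1$, while $\frac{|\psi(x_0)-\psi(y_0)|}{\|x_0-y_0\|}\ge 1-O(\eps)$, so $\|\psi\|_L\ge 1-O(\eps)$.

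Next I would invoke the Daugavet property of $X$ through Proposition \ref{prop:center1} applied to $\mathrm{Id}_X$: with the unit vector $g_0(t_0)/\|g_0(t_0)\|$, the functional $\psi$, and $\eps$, this produces $a\neq b$ in $X$ with $\frac{\psi(a)-\psi(b)}{\|a-b\|}>\|\psi\|_L-\eps\ge 1-O(\eps)$ and $\bigl\|\frac{a-b}{\|a-b\|}+\frac{g_0(t_0)}{\|g_0(t_0)\|}\bigr\|_X>2-\eps$. Set $H:=\Phi(a)$ and $H':=\Phi(b)$. From $\|\Phi\|_L\le 1$ we get $\|H-H'\|\le\|a-b\|$, while evaluating at $t_0$ gives $\|H-H'\|\ge\|(H-H')(t_0)\|=\|a-b\|$; hence $\|H-H'\|=\|a-b\|$, and therefore $\frac{\phi(H)-\phi(H')}{\|H-H'\|}=\frac{\psi(a)-\psi(b)}{\|a-b\|}>1-O(\eps)$, so that $u:=\frac{H-H'}{\|H-H'\|}\in S_L(\phi,O(\eps))$. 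Finally, $u(t_0)=\frac{a-b}{\|a-b\|}$, so $\|u+g_0\|\ge\|u(t_0)+g_0(t_0)\|_X>2-O(\eps)$ by the previous estimate and $\|g_0(t_0)\|>1-\eps$. Replacing $\eps$ by a fixed multiple of itself throughout gives exactly the condition in Proposition \ref{prop:center1}, whence $\mathrm{Id}_{A(\Omega,X)}$ is a Daugavet center and $A(\Omega,X)$ has the Daugavet property.

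The hard part is not conceptual but bookkeeping: one has to rerun the localization machinery of Theorem \ref{th:lipnumaox} (the $E_{f_0}$-join trick, Lemmas \ref{lem:specialh} and \ref{lem:Mlip}, and the Urysohn function of Lemma \ref{lem:urysohnsp}) anchored at the strong peak point $t_0$ forced by $g_0$ rather than by a given operator, and to verify that after this localization the transported scalar functional $\psi=\phi\circ\Phi$ still has Lipschitz norm within $O(\eps)$ of $1$. It is this quantitative control that allows the Daugavet property of $X$, used via Proposition \ref{prop:center1}, to return a direction whose lift $u$ genuinely lies in the prescribed Lip-slice of $A(\Omega,X)$.
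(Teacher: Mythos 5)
Your proposal is correct, and it reaches the conclusion by a genuinely different route at the key reduction step, even though it reuses the same localization machinery as the paper, namely the construction of Theorem \ref{th:lipnumaox} (the set $E_{f_0}$ from \eqref{eq:Ef0}, Lemmas \ref{lem:specialh}, \ref{lem:Mlip} and \ref{lem:urysohnsp}, and the map $\Phi$ with $\Phi(x)(t_0)=x$ and $\|\Phi\|_L\le 1$). The paper fixes a rank-one Lipschitz map $T$ on $A(\Omega,X)$ with $\|T\|_L=1$, compresses it to the rank-one Lipschitz map $S=T_{t_0}\circ\Phi-T_{t_0}(\Phi(0))$ on $X$, where $t_0,f,g$ are chosen so that $T_{t_0}$ nearly attains (hence $\|S\|_L>1-7\eps$), invokes the Lipschitz Daugavet equation in $X$ for $S$ (via \cite[Corollary 3.6]{KMMW}, or Theorem \ref{Thm:SCD}, since the slope of a rank-one map is SCD), and lifts by evaluating at $t_0$ to get $\|\Id_{A(\Omega,X)}+T\|_L\ge 2-8\eps$, concluding that $\Id_{A(\Omega,X)}$ is a Lipschitz Daugavet center. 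You instead verify the Lip-slice characterization of Daugavet centers (Proposition \ref{prop:center1}) for $\Id_{A(\Omega,X)}$ directly: you transport the scalar functional $\phi$ to $\psi=\phi\circ\Phi$ on $X$, anchored at a strong peak point $t_0$ that nearly norms $g_0$ and is decoupled from the pair $(f,g)$ where $\phi$ nearly attains its Lipschitz norm --- and you are right that the peaking construction of Theorem \ref{th:lipnumaox} is valid at an arbitrary $t_0\in\rho A$, since the attainment at $t_0$ there was only needed to control $T_{t_0}$, which has no analogue for a scalar $\phi$. You then use the Daugavet property of $X$ through the ``only if'' direction of Proposition \ref{prop:center1} applied to $\Id_X$, and lift the resulting direction; the lift stays in the prescribed Lip-slice because $\|\Phi\|_L\le 1$ together with $(\Phi(a)-\Phi(b))(t_0)=a-b$ forces $\|\Phi(a)-\Phi(b)\|=\|a-b\|$, and your quantitative estimates ($\|\psi\|_L\ge 1-7\eps$, membership in $S_L(\phi,O(\eps))$, and $\|u+g_0\|>2-O(\eps)$) check out and mirror those already carried out in Theorem \ref{th:lipnumaox}. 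What each approach buys: the paper's proof is shorter on the page because it simply re-cites the operator-level construction and then applies the SCD-based Corollary 3.6 of \cite{KMMW} twice (once in $X$, once to pass from the Lipschitz Daugavet equation to the Daugavet property of $A(\Omega,X)$); your proof trades operators for functionals and slices, needing only the ``if'' direction of Proposition \ref{prop:center1} at the level of $A(\Omega,X)$ (which the paper treats as immediate from the linear slice characterization of Daugavet centers in \cite{BK}) and the ``only if'' direction at the level of $X$, so it bypasses the SCD/separable-determination machinery at the top level, at the cost of somewhat heavier bookkeeping that you handle correctly. The only mild caveats are that you rely on the unproved-but-stated ``if'' half of Proposition \ref{prop:center1} and should exclude the trivial case of constant $\phi$ before normalizing; neither is a gap.
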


\begin{proof}
Suppose that $X$ has the Daugavet property. Fix any rank-one Lipschitz map $T \in \Lip(A(\Omega,X),A(\Omega,X))$ with $\|T\|_L=1$. Let $S \in \Lip(X,X)$ be defined by $S(x) := T_{t_0}(\Phi(x)) - T_{t_0}(\Phi(0))$, where $T_{t_0}$ and $\Phi$ are given as in the proof of Theorem \ref{th:lipnumaox}. Notice here that $S$ is also of rank-one. Thus, there exist $u,v \in X$ with $u \neq v$ such that
$$
\left\| \frac{u-v}{\|u-v\|} + \frac{Su-Sv}{\|u-v\|} \right\| > 1+ \|S\|_L -\eps > 2-8\eps,
$$
by following the arguments in the proof of Theorem \ref{th:lipnumaox}. Hence, we have that
\begin{align*}
\|\Id_{A(\Omega,X)} + T\|_L &\geq \left\| \frac{\Phi(u)-\Phi(v)}{\|\Phi(u)-\Phi(v)\|} + \frac{T(\Phi(u))-T(\Phi(v))}{\|\Phi(u)-\Phi(v)\|} \right\| \\
&\geq \left\| \frac{u-v}{\|u-v\|} + \frac{Su-Sv}{\|u-v\|} \right\| \\
&\geq 2-8\eps.
\end{align*}
So we have shown that $\Id_{A(\Omega,X)}$ on $A(\Omega,X)$ is the Lipschitz Daugavet center, which implies that $A(\Omega,X)$ has the Daugavet property by \cite[Corollary 3.6]{KMMW} (or by Theorem \ref{Thm:SCD}). 
\end{proof}

An analogous result also holds when we consider the space $A(K,X)$. Since the proof is similar to $A(\Omega, X)$, we omit the detail of the proof.

\begin{Theorem}
Let $K$ be a compact Hausdorff space and $X$ be a Banach space. If $X$ has the Daugavet property, then $A(K,X)$ also has the Daugavet property.
\end{Theorem}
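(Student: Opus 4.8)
The plan is to mirror the proof given for $A(\Omega,X)$, replacing the Urysohn-type input for function algebras (Lemma \ref{lem:urysohnsp}) by its uniform-algebra analogue (Lemma \ref{lem:urysohnsb}) and the norming-set hypothesis by the fact that the strong boundary points of a uniform algebra form a norming subset of $K$ (\cite[Proposition 4.3.3]{D}). So there is no separate hypothesis to check: the required ``$\rho A$ is norming'' condition is automatic in the compact setting.

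First I would fix a rank-one Lipschitz map $T \in \Lip(A(K,X),A(K,X))$ with $\|T\|_L = 1$ and an $\eps>0$. Exactly as in the proof of Theorem \ref{th:lipnumaox} (run with strong boundary points in place of $\rho A$ and Lemma \ref{lem:urysohnsb} in place of Lemma \ref{lem:urysohnsp}), I would produce a strong boundary point $t_0$, elements $f,g,h \in A(K,X)$, a norm-one Lipschitz map $F \in \Lip(X,A(K,X))$ with $F(h(t_0))=h$, $F(f(t_0))=f$, a peaking function $\varphi \in A$ from Lemma \ref{lem:urysohnsb}, and the associated map $\Phi : X \to A(K,X)$, $\Phi(x) = (1-\eta)(1-\varphi)F(x) + \varphi x$. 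Setting $S \in \Lip(X,X)$ by $S(x) = T_{t_0}(\Phi(x)) - T_{t_0}(\Phi(0))$, the same estimates give $\|S\|_L \geq 1 - 7\eps$, and crucially $S$ is rank-one because $T$ is rank-one and $\Phi$, $T_{t_0}$ are the relevant compositions; this is the one extra observation beyond Theorem \ref{th:lipnumaox} itself.

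Next I would invoke the Daugavet property of $X$ applied to the rank-one Lipschitz self-map $S$ (using, e.g., the Lipschitz characterization of the Daugavet property, or directly that $X$ has the Daugavet property hence $\Id_X$ is a Lipschitz Daugavet center by \cite[Corollary 3.6]{KMMW}): there are $u \neq v$ in $X$ with
\[
\left\| \frac{u-v}{\|u-v\|} + \frac{Su - Sv}{\|u-v\|}\right\| > 1 + \|S\|_L - \eps > 2 - 8\eps.
\]
Then, writing $x = \Phi(u)$, $y = \Phi(v)$ and using that $\delta_{t_0}$ composed with a supporting functional of $u-v$ supports $\Phi(u)-\Phi(v)$, together with $T(\Phi(x)) = $ the lift of $S$ as in Theorem \ref{th:lipnumaox}, I get
\[
\|\Id_{A(K,X)} + T\|_L \geq \left\| \frac{\Phi(u)-\Phi(v)}{\|\Phi(u)-\Phi(v)\|} + \frac{T(\Phi(u))-T(\Phi(v))}{\|\Phi(u)-\Phi(v)\|}\right\| \geq 2 - 8\eps.
\]
Letting $\eps \to 0$ shows $\Id_{A(K,X)}$ is a Lipschitz Daugavet center, and then \cite[Corollary 3.6]{KMMW} (or Theorem \ref{Thm:SCD}) yields that $A(K,X)$ has the Daugavet property.

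The main obstacle is purely bookkeeping: verifying that every step of the $A(\Omega,X)$ argument survives the substitution of Lemma \ref{lem:urysohnsb} for Lemma \ref{lem:urysohnsp} — in particular that the Stolz-region inequality $|\varphi(t)| + (1-\eps)|1-\varphi(t)| \leq 1$, which is the only property of the peaking function used in the construction of $E_{f_0}$, $\Phi$, and the two case analyses on $\tilde U$ versus $K\setminus \tilde U$, is exactly what Lemma \ref{lem:urysohnsb} provides. Since it is, the proof goes through verbatim, which is why I would simply say ``the proof is similar to that of the $A(\Omega,X)$ case'' and omit the repeated computations.
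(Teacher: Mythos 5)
Your proposal is correct and follows essentially the same route as the paper: the paper's own proof is precisely the $A(\Omega,X)$ argument (construct $S(x)=T_{t_0}(\Phi(x))-T_{t_0}(\Phi(0))$, note it is rank-one, use the Daugavet property of $X$ to get the $2-8\eps$ estimate, and conclude via the Lipschitz Daugavet center characterization), rerun with strong boundary points and Lemma \ref{lem:urysohnsb} in place of $\rho A$ and Lemma \ref{lem:urysohnsp}, with the norming property of strong boundary points supplied by \cite[Proposition 4.3.3]{D}. Your extra observations (rank-oneness of $S$ and the verification that the Stolz-region inequality is all that is used of the peaking function) are exactly the points the paper leaves implicit when it says the proof is similar.
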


Note that the above result generalizes \cite[Remark 6]{MP} that shows the inheritance of the Daugavet property from $X$ to $C(K,X)$. 

		\section{Lipschitz numercial index on the absolute sum}\label{section:absolute-sum}

In this section, we provide other stability results on the Lipschitz numerical index. The section concerns the space with absolute sums, the K\"othe-Bochner spaces, and Banach spaces with increasing one-complemented subspaces whose union is dense. This allows us to establish the stability results on the $\ell_p$-sum of spaces in the spirit of \cite{MMPR}. 

\subsection{Lipschitz numerical index on the absolute sum}\label{subsection:absolute-sum}

In \cite[Corollary 4.4]{WHT}, they proved that the Lipschitz numerical index is stable under the $c_0$-sums, $\ell_1$-sums and $\ell_\infty$-sums of spaces. These sums can be extended to the name of absolute sums, which we will introduce below briefly.

Let $\Lambda$ be any nonempty index set and $E$ be a linear subspace of $\R^\Lambda$. Recall that according to \cite{MMPR} an \emph{absolute norm} on $E$ is a complete norm $\|\cdot\|_E$ satsfying that
\begin{enumerate}
\setlength\itemsep{0.3em}
\item[\textup{(i)}] given any $(a_\lambda),(b_\lambda) \in \R^\Lambda$ with $|a_\lambda|=|b_\lambda|$ for each $\lambda \in \Lambda$, if $(a_\lambda) \in E$ then $(b_\lambda) \in E$ with $\|(a_\lambda)\|_E = \|(b_\lambda)\|_E$,
\item[\textup{(ii)}] for every $\lambda \in \Lambda$, the characteristic function $\chi_{\{\lambda\}}$ of the singleton $\{\lambda\}$ satisfies that $\chi_{\{\lambda\}} \in E$ with $\|\chi_{\{\lambda\}}\|_E=1$.
\end{enumerate}
An \emph{absolue sum} of a family $\{X_\lambda: \lambda \in \Lambda\}$ is the Banach space defined by
$$
\biggl[ \bigoplus_{\lambda \in \Lambda} X_\lambda \biggr]_E := \left\{ (x_\lambda) : x_\lambda \in X_\lambda \text{ for each } \lambda \in \Lambda, \left( \|x_\lambda\| \right) \in E \right\}.
$$
We write by $E'$ the \emph{K\"othe dual} of $E$ given by
$$
E' := \left\{ (b_\lambda) \in \R^\Lambda : \|(b_\lambda)\|_{E'} := \sup_{(a_\lambda) \in B_E} \sum_{\lambda \in \Lambda} |b_\lambda||a_\lambda| < \infty \right\},
$$
which is a linear subspace of $\R^\Lambda$.

We are now able to prove the Lipschitz version of \cite[Theorem 2.1]{MMPR}. Here, we use a slightly different hypothesis on the set $A$ comparing to the one in \cite[Theorem 2.1]{MMPR}, but it can be easily shown that these are equivalent to each other.

\begin{Theorem}\label{Theorem:absolute-sum}
Let $\Lambda$ be a nonempty set and $E$ be a linear subspace of $\R^\Lambda$ endowed with an absolute norm $\|\cdot\|_E$. Suppose that there is a dense subset $A \subseteq S_E^+$ such that given $(a_\lambda) \in A$, there exists $(b_\lambda) \in S_{E'}$ such that $(b_\lambda)(a_\lambda)=1$, where $S_E^+ := \bigl\{ (s_\lambda) \in E : (s_\lambda) = (|s_\lambda|) \bigr\}$. Then, for an arbitrary family $\{X_\lambda: \lambda \in \Lambda\}$ of Banach spaces, we have
$$
n_L \biggl( \biggl[ \bigoplus_{\lambda \in \Lambda} X_\lambda \biggr]_E \biggr) \leq \inf_{\lambda \in \Lambda} n_L(X_\lambda).
$$
\end{Theorem}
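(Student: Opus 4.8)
The plan is to fix $\lambda_0 \in \Lambda$, write $Z := \bigl[ \bigoplus_{\lambda \in \Lambda} X_\lambda \bigr]_E$, and prove $n_L(Z) \le n_L(X_{\lambda_0})$; taking the infimum over $\lambda_0$ then yields the statement. Let $P_{\lambda_0}\colon Z \to X_{\lambda_0}$ be the canonical coordinate projection and $\iota_{\lambda_0}\colon X_{\lambda_0} \to Z$ the canonical inclusion; it is standard (using that absolute norms are monotone) that $\iota_{\lambda_0}$ is isometric and $\|P_{\lambda_0}\| = 1$. Given $\eps > 0$, choose $S \in \Lip(X_{\lambda_0}, X_{\lambda_0})$ with $\|S\|_L = 1$ and $\nu_L(S) < n_L(X_{\lambda_0}) + \eps$, and put $T := \iota_{\lambda_0} S P_{\lambda_0} \in \Lip(Z, Z)$. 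Since $T$ reproduces $S$ on the isometric copy $\iota_{\lambda_0}(X_{\lambda_0})$ while $\|\iota_{\lambda_0}\| = \|P_{\lambda_0}\| = 1$, one has $\|T\|_L = \|S\|_L = 1$, so it suffices to prove $\nu_L(T) \le \nu_L(S)$: this gives $n_L(Z) \le \nu_L(T) \le \nu_L(S) < n_L(X_{\lambda_0}) + \eps$, and $\eps$ is arbitrary.

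To estimate $\nu_L(T)$ I would apply Lemma \ref{lem:radiusattain} to $Z$ with a suitably chosen $Q \subseteq \Pi_Z$. For $z_1 \neq z_2$ in $Z$, write $w := z_1 - z_2$, $N := \|w\|_Z$, and $w_\lambda$ for the $\lambda$-th coordinate of $w$, so that $(\|w_\lambda\|/N)_\lambda \in S_E^+$. Let $Q$ consist of all triples $(z_1, z_2, z^*)$ such that $(a_\lambda) := (\|w_\lambda\|/N)_\lambda$ lies in $A$ and $z^* := (N b_\lambda \phi_\lambda)_\lambda$, where $(b_\lambda) \in S_{E'}$ is provided by the hypothesis (and taken with $b_\lambda \ge 0$ after replacing it by $(|b_\lambda|)$, which stays in $S_{E'}$ and still satisfies $\sum_\lambda a_\lambda b_\lambda = 1$), and $\phi_\lambda \in S_{X_\lambda^*}$ is chosen with $\phi_\lambda(w_\lambda) = \|w_\lambda\|$ (arbitrary if $w_\lambda = 0$). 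A direct estimate gives $|z^*(\zeta)| \le N\|(b_\lambda)\|_{E'}\|\zeta\|_Z = N\|\zeta\|_Z$ for every $\zeta \in Z$, while $z^*(w) = \sum_\lambda N b_\lambda \|w_\lambda\| = N^2 \sum_\lambda a_\lambda b_\lambda = N^2$; hence $z^* \in D(w)$ and $(z_1, z_2, z^*) \in \Pi_Z$.

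Next I would check that $\pi(Q)$ is dense in $Z \times Z$: given $z_1 \neq z_2$ and $\delta > 0$, use the density of $A$ in $S_E^+$ to pick $(a_\lambda) \in A$ close to $(\|w_\lambda\|/N)_\lambda$, replace $w$ by the vector $w'$ whose $\lambda$-th coordinate is $(N a_\lambda/\|w_\lambda\|)w_\lambda$ (or $N a_\lambda \xi_\lambda$ for a fixed $\xi_\lambda \in S_{X_\lambda}$ when $w_\lambda = 0$), and set $z_1' := z_1$, $z_2' := z_1 - w'$; since the coordinate maps on $(E, \|\cdot\|_E)$ are continuous, $\|w' - w\|_Z$ can be made arbitrarily small, so $(z_1', z_2') \in \pi(Q)$ is close to $(z_1, z_2)$ (pairs near the diagonal are obtained the same way with a tiny $w'$). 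Finally, for $(z_1, z_2, z^*) \in Q$, setting $u := z_1(\lambda_0)$ and $v := z_2(\lambda_0)$ we have $Tz_1 - Tz_2 = \iota_{\lambda_0}(Su - Sv)$ and $z^* \circ \iota_{\lambda_0} = N b_{\lambda_0} \phi_{\lambda_0}$, which norms $u - v = w_{\lambda_0}$; after rescaling $z^* \circ \iota_{\lambda_0}$ to an element of $D(u-v)$ and using $\|z^* \circ \iota_{\lambda_0}\| \le \|z^*\|_{Z^*} = N$ together with $\|u - v\| = \|P_{\lambda_0} w\| \le N$, one gets $|z^*(Tz_1 - Tz_2)| \le \nu_L(S)\,\|z^* \circ \iota_{\lambda_0}\|\,\|u-v\| \le \nu_L(S)\,N^2$. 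Therefore $\nu_L(T) = \sup_{(z_1,z_2,z^*)\in Q} |z^*(Tz_1 - Tz_2)|/\|z_1 - z_2\|_Z^2 \le \nu_L(S)$, as required.

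The main obstacle is the construction of $Q$: one must perturb an arbitrary pair $(z_1, z_2)$ so that the normalised vector of coordinate-norms of $z_1 - z_2$ lands in $A$, and then exhibit a concrete "diagonal" functional in $D(z_1 - z_2)$ whose restriction to the $\lambda_0$-th coordinate still norms $(z_1 - z_2)(\lambda_0)$ — this is exactly the step where the hypothesis linking $A$ with $S_{E'}$ (and the elementary norm estimate for diagonal functionals on $Z$) is essential. Everything else — the value of $\|T\|_L$, the monotonicity of absolute norms, and the concluding inequality — is routine.
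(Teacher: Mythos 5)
Your proposal is correct and follows essentially the same route as the paper: build $T=\iota_{\lambda_0}SP_{\lambda_0}$, check $\|T\|_L=1$, and apply Lemma \ref{lem:radiusattain} to a set $Q$ of pairs whose normalized coordinate-norm vector lies in $A$, paired with the diagonal functionals $(N b_\lambda\phi_\lambda)$ supplied by the hypothesis, then pass the estimate to the $\lambda_0$-th coordinate to get $\nu_L(T)\le\nu_L(S)$. The only cosmetic difference is that you build the norming diagonal functional and the density perturbation explicitly inside the definition of $Q$, whereas the paper first records the density of $\mathcal{A}=\{(z_\lambda):(\|z_\lambda\|)\in A\}$ in $S_X$ and then constructs the same functionals; the substance is identical.
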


\begin{proof}
We write $X = \Bigl[ \bigoplus_{\lambda \in \Lambda} X_\lambda \Bigr]_E$ and $X' = \Bigl[ \bigoplus_{\lambda \in \Lambda} X_\lambda^* \Bigr]_{E'} \subseteq X^*$. Let $S \in S_{\Lip(X_\kappa,X_\kappa)}$ be given. For a fixed $\kappa \in \Lambda$, define $T \in \Lip(X, X)$ by $T := I_\kappa SP_\kappa$. It is routine to show that $\|T\|=1$. We claim that $\nu_L(T) \leq \nu_L(S)$, which shows that $n_L(X) \leq n_L(X_\kappa)$ since $n_L(X) \leq \nu_L(T) \leq \nu_L(S)$ and $S \in S_{\Lip(X_\kappa,X_\kappa)}$ was arbitrary. Let $\mathcal{A} \subseteq S_X$ be a subset defined by
$$
\mathcal{A} := \{ (z_\lambda) \in X : (\|z_\lambda\|) \in A \}.
$$
Then, $\mathcal{A}$ is dense in $S_X$. Let us define
$$
Q : = \left\{\bigl((x_\lambda),(y_\lambda),(x_\lambda^*)\bigr) \in X \times X \times X' : \frac{(x_\lambda-y_\lambda)}{\|(x_\lambda-y_\lambda)\|} \in \mathcal{A}, (x_\lambda^*) \in D\bigl((x_\lambda - y_\lambda)\bigr)\right\}.
$$
We claim that $\pi(Q)$ is dense in $X \times X$. Indeed, given any $\bigl((x_\lambda),(y_\lambda)\bigr) \in X \times X$ and $\eps>0$, consider
$$
(z_\lambda) := \frac{(x_\lambda-y_\lambda)}{\|(x_\lambda-y_\lambda)\|} \in S_X.
$$
By assumption, there exists $(z_\lambda') \in \mathcal{A}$ so that $\|(z_\lambda'-z_\lambda)\|<\eps \min\{ \|(x_\lambda-y_\lambda)\|^{-1}, 1 \}$. 
Take $(x_\lambda') = (x_\lambda)$ and $(y_\lambda') = (x_\lambda') + \|x_\lambda - y_\lambda \| ( z_\lambda' )$. Then 
$$
\|(x_\lambda'-x_\lambda)\| = 0 <\eps, \quad \|(y_\lambda'-y_\lambda)\|<\eps \quad \text{and} \quad \frac{(x_\lambda'-y_\lambda')}{\|(x_\lambda'-y_\lambda')\|} = (z_\lambda').
$$
Next, choose for each $\lambda \in \Lambda$ a functional $x_\lambda^* \in S_{X_\lambda*}$ such that $x_\lambda^*(z_\lambda') = \|z_\lambda'\|$. By hypothesis there is $(b_\lambda) \in S_{E'}$ such that $(b_\lambda)(\|z_\lambda'\|) = 1$. Thus, we obtain that 
$$
\Bigl\{ \bigl( (x_\lambda'), (y_\lambda'), \|(x_\lambda'-y_\lambda')\| (b_\lambda x_\lambda^*) \bigr) \Bigr\} \in Q
$$
and $\pi (Q)$ is dense in $X \times X$. 
It follows from Lemma \ref{lem:radiusattain} that 
$$
\nu_L(T) = \sup \left\{ \left| \frac{(x_\lambda^*)\bigl(T(x_\lambda)-T(y_\lambda)\bigr)}{\|(x_\lambda-y_\lambda)\|^2} \right| : \bigl( (x_\lambda), (y_\lambda), (x_\lambda^*) \bigr) \in Q \right\}.
$$
Therefore,
\begin{align*}
\left| \frac{(x_\lambda^*)\bigl(T(x_\lambda)-T(y_\lambda)\bigr)}{\|(x_\lambda-y_\lambda)\|^2} \right| = \left| \frac{(x_\lambda^*)(I_\kappa Sx_\kappa-I_\kappa Sy_\kappa)}{\|(x_\lambda-y_\lambda)\|^2} \right| \leq \left| \frac{\frac{\|x_\kappa-y_\kappa\|}{\|x_\kappa^*\|} x_\kappa^* (Sx_\kappa - Sy_\kappa)}{\|(x_\lambda-y_\lambda)\|^2} \right|,
\end{align*}
where the last inequality follows from $\|x_\kappa^*\|\|x_\kappa-y_\kappa\| \leq \|(x_\lambda-y_\lambda)\|^2$. Notice also from the fact $x_\lambda^*(x_\lambda-y_\lambda) = \|x_\lambda^*\|\|x_\lambda-y_\lambda\|$ for all $\lambda \in \Lambda$ that
$$
\frac{\|x_\kappa-y_\kappa\|}{\|x_\kappa^*\|} x_\kappa^* \in D(x_\kappa-y_\kappa),
$$
and hence we can deduce that $\nu_L(T) \leq \nu_L(S)$.
\end{proof}

If the space $E$ is order-continuous, the hypothesis on the denseness automatically follows from the equality $E'=E^*$. Also, as we can always find a norm attaining functional in $E^*$ close to the given any functional by the Bishop-Phelps theorem, we obtain the following conseqeunces, a Lipschitz version of \cite[Corollary 2.2]{MMPR}.

\begin{Corollary}\label{Corollary:order-continuous-sum}
Let $\Lambda$ be a nonempty set and $E$ be a linear subspace of $\R^\Lambda$ endowed with an absolute norm $\|\cdot\|_E$ which is order continuous. Then, for an arbitrary family $\{X_\lambda: \lambda \in \Lambda\}$ of Banach spaces, we have
$$
n_L \biggl( \biggl[ \bigoplus_{\lambda \in \Lambda} X_\lambda \biggr]_E \biggr) \leq \inf_{\lambda \in \Lambda} n_L(X_\lambda) \quad \text{and} \quad n_L \biggl( \biggl[ \bigoplus_{\lambda \in \Lambda} X_\lambda \biggr]_{E'} \biggr) \leq \inf_{\lambda \in \Lambda} n_L(X_\lambda).
$$
\end{Corollary}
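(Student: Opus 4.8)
The plan is to deduce both inequalities from Theorem \ref{Theorem:absolute-sum}, following the pattern of \cite[Corollary 2.2]{MMPR}: apply it first to $E$ and then to its K\"othe dual $E'$ (which is again a linear subspace of $\R^\Lambda$ carrying an absolute norm, and whose own K\"othe dual is $E''$). In both cases the only nontrivial point is to verify the denseness hypothesis on a set $A$ of positive norm-one sequences admitting norming functionals, and the mechanism behind both verifications is that order continuity of $E$ makes the canonical isometric inclusion $E' \hookrightarrow E^*$ surjective, i.e. $E' = E^*$.

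For the first inequality I would check that $A := S_E^+$ already works. Given $(a_\lambda) \in S_E^+$, the Hahn--Banach theorem provides $\phi \in S_{E^*} = S_{E'}$ with $\phi\bigl((a_\lambda)\bigr) = 1$; writing $\phi = (b_\lambda)$ and replacing it by $(|b_\lambda|)$, which still lies in $S_{E'}$ by absoluteness and satisfies $1 = \sum_\lambda b_\lambda a_\lambda \leq \sum_\lambda |b_\lambda| a_\lambda \leq \|(|b_\lambda|)\|_{E'}\|(a_\lambda)\|_E = 1$, one obtains a functional in $S_{E'}$ with $(|b_\lambda|)\bigl((a_\lambda)\bigr) = 1$. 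Theorem \ref{Theorem:absolute-sum} then gives $n_L\bigl(\bigl[\bigoplus_{\lambda} X_\lambda\bigr]_E\bigr) \leq \inf_\lambda n_L(X_\lambda)$.

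For the second inequality I would apply Theorem \ref{Theorem:absolute-sum} to $E'$, so that the role of the dual is played by $E''$. The hypothesis to verify is the density in $S_{E'}^+$ of the positive norm-one sequences that are normed by some element of $S_{E''}$. Here the point is that, through $E' = E^*$, each $(a_\lambda) \in S_{E'}$ is a norm-one functional on $E$; by the Bishop--Phelps theorem, those $(a_\lambda)$ attaining their norm on $B_E$ are dense in $S_{E'}$. If such an $(a_\lambda)$ attains its norm at $(x_\lambda) \in B_E$, then $\sum_\lambda |a_\lambda||x_\lambda| = 1$, whence $(|x_\lambda|) \in S_E$ by absoluteness of the $E$-norm; and since order continuity of $E$ forces $\|\cdot\|_{E''}$ to restrict to $\|\cdot\|_E$ on $E$, the vector $(|x_\lambda|)$ lies in $S_{E''}$ and norms $(|a_\lambda|) \in S_{E'}^+$. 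Finally, $\bigl\|(|a_\lambda|) - (c_\lambda)\bigr\|_{E'} \leq \|(a_\lambda) - (c_\lambda)\|_{E'}$ for every $(c_\lambda) \in S_{E'}^+$, because $\bigl||a_\lambda| - |c_\lambda|\bigr| \leq |a_\lambda - c_\lambda|$ pointwise and absolute norms are monotone; hence $A := \{(|a_\lambda|) : (a_\lambda) \in S_{E'} \text{ attains its norm on } B_E\}$ is dense in $S_{E'}^+$, and Theorem \ref{Theorem:absolute-sum} yields $n_L\bigl(\bigl[\bigoplus_\lambda X_\lambda\bigr]_{E'}\bigr) \leq \inf_\lambda n_L(X_\lambda)$.

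The facts I would simply cite are the standard ones from K\"othe space theory: that $E'$ obeys the axioms of an absolute norm, that absolute norms are monotone, and that $E' = E^*$ (with $\|\cdot\|_{E''}$ agreeing with $\|\cdot\|_E$ on $E$) when $E$ is order continuous. The one delicate point is the \emph{direction} in which Bishop--Phelps is used for the second inequality: what is needed is norm attainment of members of $E'$ regarded as functionals on $E$ --- not norm attainment of functionals defined on $E'$ --- and it is precisely the identity $E^* = E'$ that both makes this meaningful and guarantees the attaining vector can be chosen in $E \subseteq E''$.
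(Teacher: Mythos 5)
Your proposal is correct and follows essentially the same route as the paper, which likewise deduces the first inequality from $E'=E^*$ (via order continuity and Hahn--Banach) and the second from the Bishop--Phelps theorem applied to norm-attaining elements of $E^*=E'$, exactly as in \cite[Corollary 2.2]{MMPR}. You merely spell out the absolute-value and norming-vector verifications that the paper leaves implicit.
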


The following cases are more visible examples, which can be derived from the fact that $E$ is order-continuous.

\begin{Corollary}
\textup{(a)} Let $E$ be the space $\R^m$ endowed with an absolute norm $\|\cdot\|_E$ and let $X_1, \ldots, X_m$ be Banach spaces. Then,
$$
n_L\bigl([X_1 \oplus \cdots \oplus X_m]_E\bigr) \leq \min \{ n_L(X_1), \ldots, n_L(X_m) \}.
$$
\textup{(b)} Let $E$ be a Banach space with an 1-unconditional basis and let $\{X_j : j \in \N\}$ be a sequence of Banach spaces. Then,
$$
n_L \biggl( \biggl[ \bigoplus_{j \in \N} X_j \biggr]_E \biggr) \leq \inf_{j \in \N} n_L(X_j).
$$
\end{Corollary}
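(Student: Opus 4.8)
The plan is to obtain both statements as immediate specializations of Corollary~\ref{Corollary:order-continuous-sum}; the only thing requiring a word of justification is that in each case the base space $E$, viewed as a K\"othe sequence space, carries an order continuous absolute norm, so that the hypothesis of that corollary is met.

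For part (a), take $\Lambda=\{1,\dots,m\}$, so that $E=\R^m$ is a linear subspace of $\R^\Lambda$ and the given absolute norm makes it an admissible base space for an absolute sum. Since $E$ is finite dimensional, all Hausdorff linear topologies on it coincide, so every order bounded monotone sequence converges in norm; in particular $\|\cdot\|_E$ is order continuous. Corollary~\ref{Corollary:order-continuous-sum} then yields
\[
n_L\bigl([X_1\oplus\cdots\oplus X_m]_E\bigr)\le \inf_{1\le j\le m} n_L(X_j)=\min\{n_L(X_1),\dots,n_L(X_m)\},
\]
the infimum being attained because $\Lambda$ is finite.

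For part (b), I would first realize $E$ as a subspace of $\R^\N$. Normalizing the $1$-unconditional basis $\{e_j\}_{j\in\N}$ and letting $\{e_j^*\}$ denote the biorthogonal functionals, the coordinate map $x\mapsto (e_j^*(x))_{j\in\N}$ is a linear injection of $E$ into $\R^\N$; $1$-unconditionality says precisely that changing the signs of coordinates is an isometry, which is condition~(i) in the definition of an absolute norm, while $\chi_{\{j\}}$ corresponds to $e_j$, of norm $1$, giving condition~(ii). The same monotonicity estimate $\|\sum c_j e_j\|\le\|\sum d_j e_j\|$ whenever $|c_j|\le|d_j|$ shows that $E$ is an ideal in $\R^\N$ and that the partial sums of any basis expansion are Cauchy, so coordinatewise infima and order intervals behave as expected. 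For order continuity, let $0\le x_n\downarrow 0$ in $E$; then $x_n(j)\to 0$ for every $j$ and $0\le x_n(j)\le x_1(j)$. Given $\eps>0$, pick $N$ with $\bigl\|\sum_{j>N}x_1(j)e_j\bigr\|<\eps$; splitting $x_n$ at $N$, the tail has norm $<\eps$ by $1$-unconditionality while the finitely supported head tends to $0$ in norm as $n\to\infty$, whence $\limsup_n\|x_n\|\le\eps$. Thus $\|\cdot\|_E$ is order continuous and Corollary~\ref{Corollary:order-continuous-sum} applies verbatim.

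The whole argument is essentially bookkeeping; the one point that deserves care is the passage in (b) from an abstract Banach space with a $1$-unconditional basis to a genuine subspace of $\R^\N$ with an absolute norm, together with the verification of order continuity there---one must keep the unconditionality constant equal to $1$ throughout so that no spurious constants enter and the hypotheses of Corollary~\ref{Corollary:order-continuous-sum} are met exactly.
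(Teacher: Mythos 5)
Your proposal is correct and matches the paper's route exactly: both parts are obtained as special cases of Corollary~\ref{Corollary:order-continuous-sum}, the only work being the (routine) verification that $\R^m$ with an absolute norm and a space with a $1$-unconditional basis, viewed as Köthe sequence spaces, have order continuous norm. Your verification of that point (finite dimensionality in (a); norm convergence of basis expansions plus $1$-unconditional monotonicity in (b)) is sound, and is precisely what the paper leaves implicit.
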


In \cite[Theorem 2.5]{MMPR}, the authors show that the converse inequality of Theorem \ref{Theorem:absolute-sum} with constraints also holds. However, concerning the case of Lipschitz maps, the choice of the set $A \subseteq S_E$ such that $\overline{\conv}(A) = B_E$ is not enough to control the behavior of $T$ since it lacks the linearity.

\begin{question}
In the above setting \textup{(}possibly with more conditions\textup{)}, does the inequality
$$
n_L \biggl( \biggl[ \bigoplus_{\lambda \in \Lambda} X_\lambda \biggr]_E \biggr) \geq \inf_{\lambda \in \Lambda} n_L(X_\lambda).
$$
hold in general?
\end{question}

\subsection{Lipschitz numerical index on the K\"othe spaces}\label{subsection:Kothe}

Given a complete $\sigma$-finite measure space $(\Omega,\Sigma,\mu)$, let $L_0(\mu)$ be the space of all $\Sigma$-measurable locally integrable functions $f:\Omega \to \R$. A subspace $E$ of $L_0(\mu)$, which is also known as the \emph{K\"othe function space}, is the space endowed with the norm $\|\cdot\|_E$ such that
\begin{enumerate}
\setlength\itemsep{0.3em}
\item[\textup{(i)}] $f \in E$ and $\|f\|_E \leq \|g\|_E$ whenever $g \in E$ and $|f| \leq |g|$ almost everywhere on $\Omega$,
\item[\textup{(ii)}] for any $A \in \Sigma$ with $0<\mu(A)<\infty$, the characterstic function $\chi_A:\Omega \to \R$ lies in $E$.
\end{enumerate}
The \emph{K\"othe dual} $E'$ of $E$ is known to be the function space defined by
$$
E' := \left\{ g \in L_0(\mu): \|g\|_{E'} := \sup_{f \in B_E} \int_\Omega |fg| d\mu < \infty\right\},
$$
which can also be viewed as a K\"othe space on $(\Omega,\Sigma,\mu)$. Given $g \in E'$, the mapping
$$
f \mapsto \int_\Omega fg d\mu \qquad \text{for } f \in E
$$
becomes an element of $E^*$, which shows that $E' \subseteq E^*$ isometrically. A \emph{strongly measurable} function $f: \Omega \to X$ is a function which can be approximated by simple functions almost everywhere on $\Omega$. We write the \emph{K\"othe-Bochner function space} by $E(X)$ to denote the space of all strongly measurable functions $f: \Omega \to X$ such that $(t \mapsto \|f(t)\|_X) \in E$ equipped with the norm $\|\cdot\|_{E(X)} = \bigl\| t \mapsto \|\cdot(t)\|_X \bigr\|_E$. The dual space of $E(X)$ can be identified with the space $E'(X^*,w^*)$ which consists of all $w^*$-scalarly measurable functions $\Phi:\Omega \to X^*$ such that $\|\Phi(\cdot)\|_{X^*} \in E'$ which an action
$$
\langle \Phi, f \rangle = \int_\Omega \langle \Phi(t), f(t)\rangle d \mu(t) \qquad \text{for } f \in E(X)
$$
(see \cite[Theorem 3.2.4]{Lin}). For the detailed information on the K\"othe spaces, we refer to \cite{LT2} for its background. In \cite{MMPR}, the one-sided inequality between the numerical indices of K\"othe-Bochner space $E(X)$ and the underlying space $X$ has been shown. Now we provide the Lipschitz version of this result. 
%Notice that the lack of linearity on Lipschitz maps creates significant changes in the proof.

\begin{Theorem}\label{Theorem:Kothe-space}
Let $X$ be a Banach space, $(\Omega,\Sigma,\mu)$ be a complete $\sigma$-finite measure space and $E$ be a K\"othe space on $(\Omega,\Sigma,\mu)$. Suppose that there exists a dense subset $\mathcal{A} \subseteq S_{E(X)}$ such that for every $f \in \mathcal{A}$ there is $\Phi_f \in E'(X^*,w^*)$ satisfying $\||\Phi_f|\|_{E'} = \langle \Phi_f, f \rangle =1$. Then, we have
$$
n_L(E(X)) \leq n_L(X).
$$
\end{Theorem}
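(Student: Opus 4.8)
The plan is to mimic the proof of Theorem~\ref{Theorem:absolute-sum}, replacing the absolute-sum structure with the K\"othe-Bochner structure, and using the evaluation maps at points of $\Omega$ in place of the coordinate projections $P_\kappa$. Concretely, given $S \in S_{\Lip(X,X)}$, I would show $n_L(E(X)) \le \nu_L(S)$; since $S$ is arbitrary this gives $n_L(E(X)) \le n_L(X)$. Unlike the absolute-sum case, however, there is no single ``coordinate'' to project onto, so the operator $T$ on $E(X)$ should be built pointwise: set $T(f)(t) := S(f(t))$, i.e. $T = Q_S$ in the notation of Section~\ref{subsection:omega}. One checks that $Q_S f$ is strongly measurable when $f$ is (approximate $f$ by simple functions and use continuity of $S$), that $\|Q_S f\|_X(t) = \|S(f(t))\|_X \le \|S\|_L \|f(t)\|_X$ so $Q_S f \in E(X)$, and that $\|Q_S\|_L \le \|S\|_L = 1$; moreover testing on constant-direction perturbations shows $\|Q_S\|_L = 1$. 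Thus $T := Q_S \in S_{\Lip(E(X),E(X))}$ and $n_L(E(X)) \le \nu_L(T)$.

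The core of the argument is then to estimate $\nu_L(T)$ from above by $\nu_L(S)$ using an appropriate dense subset $Q$ of the state $\Pi_{E(X)}$, exactly as in Theorem~\ref{Theorem:absolute-sum}. First I would exploit the hypothesis: for $f \in \mathcal{A}$ there is $\Phi_f \in E'(X^*,w^*)$ with $\||\Phi_f|\|_{E'} = \langle \Phi_f, f\rangle = 1$; this forces, for $\mu$-a.e.\ $t$, $\langle \Phi_f(t), f(t)\rangle = \|\Phi_f(t)\|_{X^*}\|f(t)\|_X$ (the integral of the pointwise inequality $\langle \Phi_f(t),f(t)\rangle \le \|\Phi_f(t)\|_{X^*}\|f(t)\|_X$ against the norm $\||\Phi_f|\|_{E'}$ being an equality). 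Given an arbitrary pair $(f,g) \in E(X)\times E(X)$ with $f \ne g$ and $\eps>0$, I would approximate $(f-g)/\|f-g\|_{E(X)}$ in $S_{E(X)}$ by some $h \in \mathcal{A}$ with $\|h - (f-g)/\|f-g\|_{E(X)}\|_{E(X)} < \eps\min\{\|f-g\|_{E(X)}^{-1},1\}$, then replace $(f,g)$ by $(f', g')$ with $f' = f$ and $g' = f' - \|f-g\|_{E(X)}\, h$, so that $(f'-g')/\|f'-g'\|_{E(X)} = h \in \mathcal{A}$ and $\max\{\|f'-f\|,\|g'-g\|\} < \eps$; this shows $\pi(Q)$ is dense in $E(X)\times E(X)$ where
\[
Q := \left\{ (f,g,\Psi) : \tfrac{f-g}{\|f-g\|_{E(X)}} \in \mathcal{A},\ \Psi \in D(f-g)\right\}.
\]
Here one uses $\|f-g\|_{E(X)}\,\Phi_{(f-g)/\|f-g\|_{E(X)}} \in D(f-g)$ to see $Q$ is nonempty over each such pair. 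By Lemma~\ref{lem:radiusattain}, $\nu_L(T)$ is computed as a supremum over $Q$.

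For the final estimate, fix $(f,g,\Psi) \in Q$ with $\Psi = \|f-g\|_{E(X)}\,\Phi_h$, $h = (f-g)/\|f-g\|_{E(X)}$. Then
\[
\frac{\langle \Psi, Tf - Tg\rangle}{\|f-g\|_{E(X)}^2}
= \frac{1}{\|f-g\|_{E(X)}}\int_\Omega \langle \Phi_h(t), S(f(t)) - S(g(t))\rangle\, d\mu(t),
\]
and for a.e.\ $t$ with $f(t)\ne g(t)$, writing $x_t^* := \Phi_h(t)/\|\Phi_h(t)\|_{X^*}$ (when the denominator is nonzero), the equality case above gives $x_t^* \in D(f(t)-g(t))$, hence $|\langle \Phi_h(t), S(f(t))-S(g(t))\rangle| \le \nu_L(S)\,\|\Phi_h(t)\|_{X^*}\|f(t)-g(t)\|_X^2 \le \nu_L(S)\|\Phi_h(t)\|_{X^*}\|f(t)-g(t)\|_X$, using $\|f(t)-g(t)\|_X \le \|f-g\|_{E(X)}$ (this last inequality needs care: it is not literally true pointwise, but $\|\Phi_h(t)\|_{X^*}\|f(t)-g(t)\|_X = \langle \Phi_h(t), f(t)-g(t)\rangle$ and integrating the bound $|\langle\Phi_h(t), S(f(t))-S(g(t))\rangle| \le \nu_L(S)\langle\Phi_h(t), f(t)-g(t)\rangle \cdot \|f(t)-g(t)\|_X/\|f-g\|_{E(X)} \le \nu_L(S)\langle\Phi_h(t),f(t)-g(t)\rangle$ against $d\mu$ and using $\int \langle \Phi_h(t), f(t)-g(t)\rangle\,d\mu = \|f-g\|_{E(X)}$ closes the loop). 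Collecting, the supremum over $Q$ is $\le \nu_L(S)$, so $\nu_L(T) \le \nu_L(S)$ and $n_L(E(X)) \le \nu_L(S)$.

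\textbf{Main obstacle.} The delicate point is the pointwise comparison $\|f(t)-g(t)\|_X$ versus $\|f-g\|_{E(X)}$, which fails in general; the argument must be organized so that only the \emph{weighted} quantity $\langle \Phi_h(t), f(t)-g(t)\rangle$ (whose integral is exactly $\|f-g\|_{E(X)}$) appears, exactly as the factor $\|x_\kappa^*\|\|x_\kappa - y_\kappa\| \le \|(x_\lambda - y_\lambda)\|^2$ was used in Theorem~\ref{Theorem:absolute-sum}. A second technical issue is measurability of $Q_S f$ and of $t \mapsto x_t^*$; both follow from approximation by simple functions and $w^*$-scalar measurability of $\Phi_h$, but should be stated. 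Everything else is a routine transcription of the absolute-sum proof.
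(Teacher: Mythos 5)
Your proposal is correct and follows essentially the same route as the paper: the pointwise operator $T=Q_S$, the set $Q$ built from $\mathcal{A}$ with $\Psi_h=\|f-g\|\Phi_h$, density of $\pi(Q)$ by the same perturbation trick, and the estimate via the equality case of the duality pairing. The ``weighted'' reorganization you describe in your final paragraph (bounding $|\langle\Phi_h(t),Sf(t)-Sg(t)\rangle|$ by $\nu_L(S)\langle\Phi_h(t),f(t)-g(t)\rangle$ pointwise, since $\bigl(f(t),g(t),\|f(t)-g(t)\|\Phi_h(t)/\|\Phi_h(t)\|\bigr)\in\Pi_X$ a.e., and then integrating using $\int\langle\Phi_h(t),f(t)-g(t)\rangle\,d\mu=\|f-g\|$) is exactly the computation in the paper, so the spurious pointwise inequality you flag never needs to be used.
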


\begin{proof}
Let $S \in \Lip(X,X)$ be such that $\|S\|_L=1$. Define $T: E(X) \to E(X)$ by
$$
(Tf)(t) := S(f(t)) \qquad \text{for } t \in \Omega \text{ and } f \in E(X).
$$
We first claim that $Tf$ is strongly measurable for any $f \in E(X)$. To see this, take $h = \sum_{i=1}^n x_i \chi_{A_i}$ such that $\|h(t)-f(t)\|_X < \eps$ for almost every $t \in \Omega$. Then, it is clear that $(Th)(t) = \sum_{i=1}^n S(x_i) \chi_{A_i}(t)$ and
$$
\|(Tf)(t) - (Th)(t)\| = \|S(f(t)) - S(h(t))\| \leq \|f(t)-h(t)\|_X < \eps.
$$
Next, observe that
$$
\|(Tf)(t) - (Tg)(t)\|_X \leq \|f(t)-g(t)\|_X = |f-g|(t) \quad \text{for } t \in \Omega.
$$
This implies that $Tf \in E(X)$ for all $f \in E(X)$ and
$$
\bigl\| t \mapsto \|(Tf)(t)-(Tg)(t)\|_X \bigr\|_E \leq \||f-g|\|_E = \|f-g\|_{E(X)},
$$
thus $\|Tf-Tg\|_{E(X)} \leq \|f-g\|_{E(X)}$. To deduce that $\|T\|_L =1$, we fix any $A \in \Sigma$ with $0<\mu(A)<\infty$. Given any $x,y \in X$ with $x \neq y$, define $f,g \in E(X)$ by $f := x \chi_A$ and $g := y \chi_A$. Then, we have
\begin{align*}
\|Tf-Tg\|_{E(X)} &= \bigl\| t \mapsto \|S(x \chi_A(t)) - S(y \chi_A(t)) \|_X \bigr\|_E \\
&= \bigl\| t \mapsto \|Sx-Sy\|_X \chi_A(t) \bigr\|_E = \|Sx-Sy\|_X \|\chi_A\|_E
\end{align*}
and
$$
\|f-g\|_{E(X)} = \|x-y\|_X \|\chi_A\|_E.
$$
Thus, from the equality
$$
\frac{\|Tf-Tg\|_{E(X)}}{\|f-g\|_{E(X)}} = \frac{\|t \mapsto \|Sx-Sy\|_X\|_E}{\|t \mapsto \|x-y\|_X \|_E} = \frac{\|Sx-Sy\|}{\|x-y\|},
$$
we obtain that $\|T\|_L=1$.

On the other hand, we consider the set
$$
Q := \left\{ (f,g,\Psi_h) : f,g \in E(X) \text{ with } f \neq g, \, h :=\frac{f-g}{\|f-g\|} \in \mathcal{A} \text{ and } \langle \Psi_h, h \rangle = \frac{1}{\|f-g\|} \right\}
$$
in $\Pi_{E(X)}$, where $\Psi_h := \|f-g\| \Phi_h$. By considering suitable perturbations on $f$ and $g$ (see the proof of Theorem \ref{Theorem:absolute-sum}), we can show that $\pi(Q)$ is dense in $E(X) \times E(X)$. Given $(f,g,\Psi_h) \in Q$, it follows that
\begin{align*}
&\left| \frac{\|f-g\| \langle \Psi_h, Tf - Tg \rangle}{\|f-g\|^2} \right| \\
&\quad= \frac{1}{\|f-g\|} \bigg| \int_\Omega \left\langle \|f(t)-g(t)\| \frac{\Phi_h(t)}{\|\Phi_h(t)\|}, \frac{S(f(t))-S(g(t))}{\|f(t)-g(t)\|^2} \right\rangle \|f(t)-g(t)\| \|\Phi_h(t)\| d \mu(t) \bigg| \\
&\quad\leq \frac{1}{\|f-g\|} \int_\Omega \nu_L(S) \|f(t)-g(t)\| \|\Phi_h(t)\| d\mu(t) \\
&\quad= \nu_L(S).
\end{align*}
Here, we used the facts from the equation
\begin{align*}
\langle \Psi_h, f-g \rangle &= \|f-g\| \int \left\langle \Phi_h(t) , f(t)-g(t) \right\rangle d \mu(t) \\
&\leq \|f-g\| \int_\Omega \|\Phi_h(t)\| \|f(t)-g(t)\| d\mu(t) \\
&\leq \|f-g\| \bigl\| t \mapsto \|\Phi_h(t)\| \bigr\|_{E'} \bigl\| t \mapsto \|f(t)-g(t)\| \bigr\|_E \\
&\leq \|f-g\|^2
\end{align*}
which also implies that 
\[
\left( f(t), g(t), \|f(t)-g(t)\| \dfrac{\Phi_h(t)}{\|\Phi_h(t)\|} \right) \in \Pi_X. 
\]
Consequently, by Lemma \ref{lem:radiusattain}, we have that $\nu_L (T) \leq \nu_L (S)$; hence $n_L(E(X)) \leq n_L(X)$.
\end{proof}

Thanks to \cite[Corollary 4.2]{MMPR}, we can also obtain the following consequence since an order-continuous K\"othe space satisfies the condition given in Theorem \ref{Theorem:Kothe-space}.

\begin{corollary}\label{cor:ordercont}
Let $X$ be a Banach space, $(\Omega,\Sigma,\mu)$ be a complete $\sigma$-finite measure space and $E$ be an order-continuous K\"othe space on $(\Omega,\Sigma,\mu)$. Then, we have
$$
n_L(E(X)) \leq n_L(X).
$$
\end{corollary}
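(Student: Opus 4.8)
The plan is to obtain the corollary directly from Theorem \ref{Theorem:Kothe-space}: its conclusion is precisely $n_L(E(X)) \leq n_L(X)$, so all that remains is to check that an order-continuous K\"othe space $E$ satisfies the denseness hypothesis appearing there, namely that there is a dense $\mathcal{A} \subseteq S_{E(X)}$ such that every $f \in \mathcal{A}$ admits $\Phi_f \in E'(X^*, w^*)$ with $\||\Phi_f|\|_{E'} = \langle \Phi_f, f \rangle = 1$.

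The verification is purely a matter of K\"othe-Bochner duality. First I would recall that order-continuity of $E$ forces $E^* = E'$ isometrically, whence the identification $E(X)^* = E'(X^*, w^*)$ of \cite[Theorem 3.2.4]{Lin} is \emph{isometric}, the norm of $\Phi$ on the right-hand side being $\| t \mapsto \|\Phi(t)\|_{X^*}\|_{E'} = \||\Phi|\|_{E'}$. Granting this, for an arbitrary $f \in S_{E(X)}$ the Hahn-Banach theorem yields a norm-one functional on $E(X)$ attaining its norm at $f$; read through the above identification, this is exactly some $\Phi_f \in E'(X^*, w^*)$ with $\||\Phi_f|\|_{E'} = 1$ and $\langle \Phi_f, f \rangle = 1$. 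Hence one may simply take $\mathcal{A} = S_{E(X)}$, which is trivially dense in $S_{E(X)}$, and the hypothesis of Theorem \ref{Theorem:Kothe-space} is met. Equivalently, this verification is precisely the one carried out in \cite[Corollary 4.2]{MMPR} for the classical numerical index, and it transfers without change.

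With the hypothesis established, Theorem \ref{Theorem:Kothe-space} applies directly and delivers $n_L(E(X)) \leq n_L(X)$. The only subtle point — and the sole place where order-continuity of $E$ is genuinely needed — is that the duality $E(X)^* = E'(X^*, w^*)$ be isometric, so that a Hahn-Banach functional has $\||\Phi_f|\|_{E'}$ equal to $1$ rather than merely $\leq 1$; this is standard, and I would just quote it from the K\"othe-space literature (or from \cite[Corollary 4.2]{MMPR}), so no real obstacle remains.
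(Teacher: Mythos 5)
Your proposal is correct and follows essentially the same route as the paper: the paper likewise deduces the corollary from Theorem \ref{Theorem:Kothe-space} by noting (via \cite[Corollary 4.2]{MMPR}) that order continuity guarantees the required dense set, which is exactly your verification that $E^*=E'$ makes the duality $E(X)^*=E'(X^*,w^*)$ isometric so that Hahn--Banach support functionals give $\Phi_f$ for every $f\in S_{E(X)}$. Your write-up merely spells out the details the paper leaves to the citation.
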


Since the space $L_p$ is an order-continuous K\"othe space for $1 \leq p <\infty$, we obtain the following consequence that includes the well-known result on the Lipschitz numerical index of $L_1(\mu, X)$ for a positive measure $\mu$ in \cite{WHT}. 

\begin{corollary}\label{corollary:L_p(mu,X)}
Let $X$ be a Banach space, $(\Omega,\Sigma,\mu)$ be a complete $\sigma$-finite measure space. Then for $1 \leq p <\infty$, we have
$$
n_L(L_p(\mu,X)) \leq n_L(X).
$$
\end{corollary}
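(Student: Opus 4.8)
The plan is to obtain this as an immediate specialization of Corollary~\ref{cor:ordercont}. The key observation is that for $1 \le p < \infty$ the Lebesgue--Bochner space $L_p(\mu,X)$ is precisely the K\"othe--Bochner space $E(X)$ associated to the K\"othe function space $E = L_p(\mu)$ on $(\Omega,\Sigma,\mu)$: a strongly measurable $f:\Omega\to X$ satisfies $t\mapsto \|f(t)\|_X \in L_p(\mu)$ exactly when $f \in L_p(\mu,X)$, and the norms agree, $\|f\|_{L_p(\mu,X)} = \bigl\| t\mapsto \|f(t)\|_X \bigr\|_{L_p(\mu)}$. So the whole proof reduces to checking that $E = L_p(\mu)$ is an order-continuous K\"othe function space in the sense fixed in Section~\ref{subsection:Kothe}.

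First I would verify the two defining conditions of a K\"othe function space for $E = L_p(\mu)$. Condition~(i), the lattice/ideal property, follows from monotonicity of the integral: if $|f| \le |g|$ $\mu$-a.e.\ with $g \in L_p(\mu)$, then $\int_\Omega |f|^p\,d\mu \le \int_\Omega |g|^p\,d\mu < \infty$, so $f \in L_p(\mu)$ with $\|f\|_p \le \|g\|_p$. Condition~(ii) holds because for $A \in \Sigma$ with $0 < \mu(A) < \infty$ we have $\|\chi_A\|_p = \mu(A)^{1/p} < \infty$, so $\chi_A \in L_p(\mu)$; here the standing $\sigma$-finiteness and completeness hypotheses on $\mu$ are exactly what is needed. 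Next I would check order continuity of $L_p(\mu)$ for $1 \le p < \infty$: if $(f_n) \subset L_p(\mu)$ with $f_n \downarrow 0$ pointwise $\mu$-a.e., then $0 \le f_n^p \le f_1^p \in L_1(\mu)$ and $f_n^p \to 0$ a.e., so the dominated convergence theorem gives $\|f_n\|_p^p = \int_\Omega f_n^p\,d\mu \to 0$, i.e.\ $\|f_n\|_p \to 0$.

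With these facts in hand, Corollary~\ref{cor:ordercont} applies verbatim to $E = L_p(\mu)$ and yields $n_L(L_p(\mu,X)) = n_L(E(X)) \le n_L(X)$, which is the desired inequality. There is no real obstacle here: the only points needing any attention are matching the paper's definition of the K\"othe--Bochner space with the usual description of $L_p(\mu,X)$, and recalling that order continuity of $L_p$ for finite $p$ is just a restatement of the dominated convergence theorem. As a by-product, the case $p = 1$ recovers the known estimate for $L_1(\mu,X)$ from \cite{WHT}, now subsumed in the K\"othe-space framework.
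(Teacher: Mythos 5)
Your proposal is correct and follows the same route as the paper: the statement is obtained by applying Corollary~\ref{cor:ordercont} to the order-continuous K\"othe function space $E = L_p(\mu)$, whose K\"othe--Bochner space $E(X)$ is exactly $L_p(\mu,X)$. Your explicit verifications of the K\"othe axioms and of order continuity via dominated convergence are just the routine details the paper leaves implicit.
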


\subsection{Lipschitz numerical index on increasing one-complemented spaces}\label{subsection:increasing}

Even though we could only show in general that an upper bound of the Lipschitz numerical index of an absolute sum of Banach spaces in the previous section, a lower bound of such a space can be obtained under a certain condition on the each component of the sum. Inspired by \cite[Theorem 5.1]{MMPR}, we prove the following result. 

\begin{proposition}\label{proposition:direct-set-union}
Let $X$ be a Banach space, $I$ be a direct set and $\{X_i:i \in I\}$ be an increasing family of one-complemented closed subspaces of $X$ whose union is dense in $X$. Then, we have
$$
n_L(X) \geq \limsup_i n_L(X_i).
$$
\end{proposition}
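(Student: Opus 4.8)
The plan is to fix $T \in S_{\Lip(X,X)}$, let $P_i \colon X \to X_i$ denote the norm-one projection onto $X_i$ for each $i \in I$, and work with the compressions $T_i := P_i T|_{X_i} \in \Lip(X_i,X_i)$ (so that $\|T_i\|_L \le \|P_i\|\,\|T\|_L = 1$). Everything rests on two claims: \textbf{(1)} $\nu_L(T_i) \le \nu_L(T)$ for every $i \in I$; and \textbf{(2)} for each $\eps \in (0,1/3)$ there is $i_0 \in I$ with $\|T_j\|_L > 1-3\eps$ for all $j \ge i_0$. Granting these, for $j \ge i_0$ one gets $\nu_L(T) \ge \nu_L(T_j) \ge \|T_j\|_L\, n_L(X_j) > (1-3\eps)\, n_L(X_j)$ (using positive homogeneity of $\nu_L$ and the definition of $n_L(X_j)$), hence $\nu_L(T) \ge (1-3\eps)\sup_{j \ge i_0} n_L(X_j) \ge (1-3\eps)\limsup_i n_L(X_i)$; letting $\eps \to 0$ gives $\nu_L(T) \ge \limsup_i n_L(X_i)$, and taking the infimum over $T \in S_{\Lip(X,X)}$ finishes the proof.

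For claim (1): given $x \neq y$ in $X_j$ and $x^* \in X_j^*$ with $x^*(x-y) = \|x^*\|\,\|x-y\| = \|x-y\|^2$, I would test $T$ against the triple $(x,y,x^*\circ P_j)$. Since $\|P_j\| = 1$ and $P_j$ is the identity on $X_j$, we have $(x^*\circ P_j)(x-y) = x^*(x-y) = \|x-y\|^2$ and $\|x^*\| = \|x^*\circ P_j\|$ (the inequality $\|x^*\circ P_j\| \le \|x^*\|$ is clear, and the reverse follows by evaluating at $x-y$); thus $(x,y,x^*\circ P_j) \in \Pi_X$. Because $(x^*\circ P_j)(Tx-Ty) = x^*(P_jTx-P_jTy) = x^*(T_jx-T_jy)$, the definition of $\nu_L(T)$ bounds $|x^*(T_jx-T_jy)|/\|x-y\|^2$ by $\nu_L(T)$; taking the supremum over all admissible triples in $\Pi_{X_j}$ yields $\nu_L(T_j) \le \nu_L(T)$.

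For claim (2): fix $\eps \in (0,1/3)$ and choose $u \neq v$ in $X$ with $\|Tu-Tv\| > (1-\eps)\|u-v\|$; put $r := \|u-v\| > 0$ and fix $\delta$ with $0 < \delta < \eps r/20$. Since $\bigcup_i X_i$ is dense and $I$ is directed, there is a single index $i_0$ such that each of $u, v, Tu, Tv$ lies within $\delta$ of $X_{i_0}$; as the family is increasing, the same holds with $X_j$ in place of $X_{i_0}$ for every $j \ge i_0$. Set $u_j := P_j u$ and $v_j := P_j v$; the norm-one projection gives $\|u_j-u\|, \|v_j-v\| < 2\delta$, hence (using $\|T\|_L = 1$) $\|Tu_j-Tv_j\| > (1-\eps)r - 4\delta$ and $\|u_j-v_j\| < r + 4\delta$, while $\|P_jTu_j-Tu_j\|, \|P_jTv_j-Tv_j\| < 6\delta$ (approximate $Tu$ and $Tv$ inside $X_j$ and use $\|P_j\|=1$). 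Combining, $\|T_ju_j-T_jv_j\| = \|P_j(Tu_j-Tv_j)\| > (1-\eps)r - 16\delta$, so $\|T_j\|_L > \dfrac{(1-\eps)r-16\delta}{r+4\delta} > 1-3\eps$ by the choice of $\delta$.

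The routine part is the bookkeeping of the constants in claim (2). The one genuinely delicate point I anticipate is that bounding $\|P_j(Tu_j-Tv_j)\|$ from below is impossible from $\|P_j\| \le 1$ alone, so one is forced to also approximate the images $Tu$ and $Tv$ — not just $u$ and $v$ — simultaneously inside a common subspace $X_{i_0}$; this is precisely where the directedness of $I$ together with the increasing and dense hypotheses is used, and it is what makes the estimate in (2) hold uniformly for all $j \ge i_0$.
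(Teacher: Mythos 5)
Your proposal is correct and follows essentially the same route as the paper: compress $T$ to $S_i := P_i T E_i$, show $\nu_L(S_i) \le \nu_L(T)$ by lifting a triple $(x,y,x^*) \in \Pi_{X_i}$ to $(x,y,x^*\circ P_i) \in \Pi_X$, show $\|S_i\|_L \to 1$ by approximating $u,v,Tu,Tv$ inside a common $X_{i_0}$ via directedness and density, and conclude from $\nu_L(T) \ge \nu_L(S_i) \ge \|S_i\|_L\, n_L(X_i)$. The only differences are bookkeeping (you test with $P_j u, P_j v$ rather than separate approximants and shuffle the $\eps$'s slightly differently), which does not change the argument.
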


\begin{proof}
Let $\eps>0$ be given. We choose $T \in S_{\Lip(X,X)}$ such that $\nu_L(T) \leq (1+\eps)n_L(X)$. For each $i \in I$, let $P_i: X \to X_i$ and $E_i: X_i \to X$ be the canonical projection and the natural injection, respectively. Define each operator $S \in \Lip(X_i,X_i)$ by $S_i := P_i TE_i$. First, we claim that $\nu_L(S_i) \leq \nu_L(T)$. Given $(x,y,x^*) \in \Pi_{X_i}$, observe that 
\begin{align*}
\frac{|x^*(S_ix-S_iy)|}{\|x-y\|^2} &= \frac{|x^*(P_iTE_ix-P_iTE_iy)|}{\|x-y\|^2} = \frac{|(P_i^*x^*)(TE_ix-TE_iy)|}{\|E_ix-E_iy\|^2} \leq \nu_L(T),
\end{align*}
since $(E_ix,E_iy,P_i^*x^*) \in \Pi_X$. This proves the claim. 

Next, we claim that $\lim_i \|S_i\|_L=1$. It is clear from the construction that $\|S_i\|_L \leq 1$ for each $i \in I$. Given $0<\delta<1$, choose $u,v \in X$ such that
$$
\frac{\|Tu-Tv\|}{\|u-v\|}>1-\frac{\delta}{8}.
$$
Fix a constant $\eta := \min \left\{\delta, \|u-v\| \delta \right\}>0$. Since $\{X_i\}_{i \in I}$ is increasing and its union is dense in $X$, we may find an index $i_0$ such that if $i \geq i_0$, then there exist points $x_1,x_2,y_1,y_2 \in X_i$ with $x_1 \neq x_2$ and $y_1 \neq y_2$ such that
$$
\|Tu-E_ix_1\|<\frac{\eta}{8}, \quad \|Tv-E_ix_2\|<\frac{\eta}{8}, \quad \|u-E_iy_1\|<\frac{\eta}{8} \quad \text{and} \quad \|v-E_iy_2\|<\frac{\eta}{8}.
$$
Then, from the immediate inequalities
$$
\bigl|\|x_1 - x_2\| - \|Tu-Tv\| \bigr| \leq \frac{\eta}{4} \quad \text{and} \quad \bigl|\|y_1 - y_2\| - \|u-v\| \bigr| \leq \frac{\eta}{4},
$$
we have
\begin{align*}
\|S_iy_1-S_iy_2\| &= \|P_iTE_i y_1 - P_iTE_iy_2\| \\
&\geq \|P_iE_ix_1 - P_iE_ix_2\| - \|P_iE_ix_i - P_iTu\| - \|P_iE_ix_2 - P_iTv\| \\
&\phantom{--}-\|P_iTu - P_iTE_iy_1\| - \|P_iTv - P_iTE_iy_2\| \\
&\geq \|x_1-x_2\| - \frac{1}{2}\eta \\
&\geq \|Tu-Tv\| -\frac{3}{4}\eta.
\end{align*}
This leads to
$$
\|S_iy_1 - S_iy_2 \| \geq \left( 1 - \frac{\delta}{8} \right) \|u-v\| - \frac{3}{4}\eta \geq \left(1-\frac{7}{8}\delta\right) \left(1+\frac{\delta}{4}\right)^{-1} \left\|y_1-y_2\right\|
$$
and by choosing $\delta$ converging to 0, we can deduce that $\lim_i \|S_i\|_L=1$.

Thus, for every $i \in I$, we get
$$
(1+\eps) n_L(X) \geq \nu_L(T) \geq \nu_L(S_i) \geq n_L(X_i) \|S_i\|_L;
$$
hence $n_L(X) \geq \limsup_i n_L(X_i)$.
\end{proof}

As a direct corollary of Proposition \ref{proposition:direct-set-union}, we can obtain the following.

\begin{corollary}
Let $X$ be a Banach space with a monotone basis $\{e_i\}$, and let $X_i := \operatorname{span}\{e_j : 1 \leq j \leq i\}$ for each $i \in \N$. Then, we have
$$
n_L(X) \geq \limsup_i n_L(X_i).
$$
\end{corollary}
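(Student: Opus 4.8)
The plan is to verify that the chain $\{X_i\}_{i\in\N}$ meets every hypothesis of Proposition \ref{proposition:direct-set-union} and then apply it verbatim. First I would note that $\N$ with its usual order is a directed set, so what remains is to check that $\{X_i\}_{i\in\N}$ is an increasing family of one-complemented closed subspaces of $X$ whose union is dense in $X$.

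Each $X_i=\operatorname{span}\{e_1,\dots,e_i\}$ is finite-dimensional, hence closed, and the inclusions $X_1\subseteq X_2\subseteq\cdots$ are immediate from the definition. Density of $\bigcup_{i\in\N}X_i$ is precisely the statement that $\{e_i\}$ is a Schauder basis: for $x=\sum_j a_je_j\in X$ the partial sums $\sum_{j=1}^i a_je_j$ lie in $X_i$ and converge to $x$, so $\overline{\bigcup_{i\in\N}X_i}=X$.

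The only point that uses the monotonicity hypothesis is one-complementation. Here I would recall that a basis $\{e_i\}$ is monotone exactly when the natural partial-sum projections $P_i\colon X\to X$, $P_i\bigl(\sum_j a_je_j\bigr)=\sum_{j=1}^i a_je_j$, all satisfy $\|P_i\|=1$. Writing $E_i\colon X_i\to X$ for the inclusion, one has $P_iE_i=\Id_{X_i}$ and $\|E_iP_i\|=1$, so $E_iP_i$ is a norm-one projection of $X$ onto $X_i$; that is, $X_i$ is one-complemented in $X$. With all hypotheses in place, Proposition \ref{proposition:direct-set-union} yields $n_L(X)\ge\limsup_i n_L(X_i)$.

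I do not anticipate a genuine obstacle; the corollary is essentially a matter of matching definitions. The one subtlety worth stating carefully is that ``monotone basis'' must be invoked in the form that produces projections of norm exactly one (which is what ``one-complemented'' requires), rather than merely uniformly bounded projections.
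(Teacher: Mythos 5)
Your proposal is correct and follows exactly the route the paper intends: the corollary is stated there as a direct consequence of Proposition \ref{proposition:direct-set-union}, with the hypotheses verified just as you do (monotonicity giving norm-one partial-sum projections, hence one-complementation, and the basis property giving the dense increasing union). Nothing further is needed.
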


\subsection{Stability on the $\ell_p$-sum}\label{subsection:ell_p-sum}

As a consequence of what we have studied so far, we can show the following relationship among the Lipschitz numerical indices of vector-valued $L_p$-spaces. Here $\ell_p^n(X)$ stands for the space $\Bigl[ \bigoplus_{i=1}^n X \Bigr]_{\ell_p}$.

\begin{proposition}
Let $X$ be a Banach space, $\mu$ be a positive measure and $1<p<\infty$ be given. Then, we have the following:
\begin{enumerate}
\setlength\itemsep{0.3em}
\item[\textup{(a)}] $\displaystyle n_L(\ell_p(X)) = \lim_n n_L(\ell_p^n(X))$.
\item[\textup{(b)}] $n_L(L_p(\mu,X)) = n_L(\ell_p(X))$ for infinite-dimensional $L_p(\mu)$.
\end{enumerate}
\end{proposition}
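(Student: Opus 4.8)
\emph{Part (a).} The plan is to sandwich $n_L(\ell_p(X))$ between $\limsup_n n_L(\ell_p^n(X))$ and $\inf_n n_L(\ell_p^n(X))$. First, $\{\ell_p^n(X)\}_{n\in\N}$ is an increasing sequence of one-complemented closed subspaces of $\ell_p(X)$ — the truncation onto the first $n$ coordinates is a norm-one projection — and its union, the finitely supported sequences, is dense; so Proposition \ref{proposition:direct-set-union} gives $n_L(\ell_p(X)) \ge \limsup_n n_L(\ell_p^n(X))$. Conversely, grouping the coordinates of $\ell_p(X)$ into consecutive blocks of length $n$ yields an isometry $\ell_p(X) \cong \bigl[\bigoplus_{k\in\N} \ell_p^n(X)\bigr]_{\ell_p}$, and since $\ell_p$ is order continuous, Corollary \ref{Corollary:order-continuous-sum} gives $n_L(\ell_p(X)) \le n_L(\ell_p^n(X))$ for every $n$. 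Finally, the identification $\ell_p^{n+1}(X) \cong [\ell_p^n(X) \oplus_p X]$ together with Corollary \ref{Corollary:order-continuous-sum} shows that $n\mapsto n_L(\ell_p^n(X))$ is non-increasing, so that $\lim_n n_L(\ell_p^n(X)) = \inf_n n_L(\ell_p^n(X))$ exists; combining these shows $\limsup_n n_L(\ell_p^n(X)) \le n_L(\ell_p(X)) \le \inf_n n_L(\ell_p^n(X))$, whence (a).

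\emph{Part (b).} For the inequality $n_L(L_p(\mu,X)) \le n_L(\ell_p(X))$ I would use the classical isometric identification $L_p(\mu,X) \cong L_p(\mu,\ell_p(X))$, which is available precisely because $L_p(\mu)$ is infinite dimensional (it reduces to $L_p(\mu) \cong \ell_p(L_p(\mu))$), and then apply Corollary \ref{corollary:L_p(mu,X)} with $\ell_p(X)$ in place of $X$; one may assume $\mu$ is $\sigma$-finite here. For the reverse inequality I would discretize and appeal to Proposition \ref{proposition:direct-set-union}: let $I$ be the collection of finite measurable partitions $\mathcal P$ of finite-measure subsets of $\Omega$ into pieces of positive measure, directed by refinement, and for $\mathcal P \in I$ let $X_{\mathcal P} \subseteq L_p(\mu,X)$ consist of the functions that are constant on each member of $\mathcal P$ and vanish off $\supp \mathcal P$. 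Then $X_{\mathcal P}$ is one-complemented through the conditional expectation $E_{\mathcal P}$ (a norm-one projection by Jensen's inequality), $\{X_{\mathcal P}\}_{\mathcal P \in I}$ is increasing, $\bigcup_{\mathcal P} X_{\mathcal P}$ contains every simple function and so is dense in $L_p(\mu,X)$, and rescaling the indicators exhibits an isometry $X_{\mathcal P} \cong \ell_p^{|\mathcal P|}(X)$. Proposition \ref{proposition:direct-set-union} then gives $n_L(L_p(\mu,X)) \ge \limsup_{\mathcal P} n_L(\ell_p^{|\mathcal P|}(X))$; since the numbers $|\mathcal P|$ are cofinal in $\N$ (infinite dimensionality of $L_p(\mu)$) and $m \mapsto n_L(\ell_p^m(X))$ is non-increasing, this $\limsup$ over the net equals $\inf_m n_L(\ell_p^m(X))$, which by part (a) is $n_L(\ell_p(X))$.

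The routine verifications — norm-one-ness of the truncation and conditional-expectation projections, density of simple functions, and the block-regrouping and indicator-rescaling isometries — are all immediate, and the monotonicity of $m\mapsto n_L(\ell_p^m(X))$ is the finite-sum case of Corollary \ref{Corollary:order-continuous-sum} already used above. The single point I expect to require care is the isometry $L_p(\mu,X) \cong L_p(\mu,\ell_p(X))$ for an \emph{arbitrary} infinite-dimensional $L_p(\mu)$: it is classical but genuinely rests on the structure theory of $L_p$-spaces (it reduces to $L_p(\mu) \cong \ell_p(L_p(\mu))$, which one checks via the atomic/non-atomic splitting and Maharam's theorem), so I would invoke it with a reference rather than reprove it; granting it, part (b) is just a matter of feeding part (a) into Proposition \ref{proposition:direct-set-union} and Corollary \ref{corollary:L_p(mu,X)}.
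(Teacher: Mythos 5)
Your proof follows essentially the same route as the paper: in (a) the inequality $n_L(\ell_p(X))\le n_L(\ell_p^n(X))$ comes from Corollary \ref{Corollary:order-continuous-sum} (the paper uses the splitting $\ell_p(X)\cong \ell_p^n(X)\oplus_p\ell_p(X)$ rather than your block regrouping, but this is cosmetic), the reverse comes from Proposition \ref{proposition:direct-set-union} applied to the truncations, and monotonicity of $n\mapsto n_L(\ell_p^n(X))$ is used in both; in (b) your net of partitions with conditional expectations is exactly the construction of \cite[Proposition 6.1(b)]{MMPR} that the paper invokes by citation, so the reverse inequality is the same argument spelled out.

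The one step where you should be more careful is the forward inequality in (b). The paper does not claim $L_p(\mu,X)\cong L_p(\mu,\ell_p(X))$ for an arbitrary positive measure $\mu$; it cites \cite[Lemma 6.2]{MMPR}, which produces a \emph{$\sigma$-finite} measure $\tau$ with $L_p(\mu,X)=L_p(\tau,X)\oplus_p Z$ and $L_p(\tau,X)=L_p(\tau,\ell_p(X))$, and then applies Corollary \ref{Corollary:order-continuous-sum} followed by Corollary \ref{corollary:L_p(mu,X)}. Your phrase ``one may assume $\mu$ is $\sigma$-finite'' is doing real work here: Theorem \ref{Theorem:Kothe-space} and Corollary \ref{corollary:L_p(mu,X)} are stated only for complete $\sigma$-finite measures, and $\mu$ itself need not be $\sigma$-finite, so you cannot simply pass to that case without an argument. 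The repair is routine but should be stated: either quote \cite[Lemma 6.2]{MMPR} as the paper does, or decompose $L_p(\mu,X)$ (for $p<\infty$) as an $\ell_p$-sum of spaces $L_p(\mu|_{\Omega_i},X)$ over a maximal disjoint family of finite-measure sets and apply Corollary \ref{Corollary:order-continuous-sum} before invoking Corollary \ref{corollary:L_p(mu,X)} on each $\sigma$-finite piece (and on a piece, or a countable union of pieces, whose $L_p$ is infinite dimensional, to get the $\ell_p(X)$-valued identification). With that reduction made explicit, your argument is complete and coincides with the paper's.
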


\begin{proof}
(a): Since $\ell_p$-sum is an absolute sum, $n_L(\ell_p(X)) \leq n_L(\ell_p^n(X))$ for every $n \in \N$ by Corollary \ref{Corollary:order-continuous-sum}. Consider a sequence of space $X_n := E_n (\ell_p^n(X)) \subseteq \ell_p(X)$ where $E_n:\ell_p^n(X) \to \ell_p(X)$ is the natural injection to first $n$ coordinates. Then, by Proposition \ref{proposition:direct-set-union}, we have that $n_L(\ell_p(X)) \geq \limsup_n n_L(X_n) = \lim_n n_L(\ell_p^n(X))$ since $n_L(\ell_p^n(X))$ is non-increasing.

(b): By \cite[Lemma 6.2]{MMPR}, we may write $L_p(\mu,X) = L_p(\tau,X) \oplus_p Z$ for some $\sigma$-finite measure $\tau$ and a Banach space $Z$, where $L_p(\tau,X) = L_p(\tau,\ell_p(X))$. Thus combining Corollaries \ref{Corollary:order-continuous-sum} with  \ref{corollary:L_p(mu,X)} gives the inequality
$$
n_L(L_p(\mu,X)) \leq n_L(L_p(\tau,X)) = n_L(L_p(\tau,\ell_p(X)) \leq n_L(\ell_p(X)).
$$
For the converse, we consider a directed set $I$ and a family of Banach spaces $\{X_i\}$ in the same way constructed in the proof of \cite[Proposition 6.1.(b)]{MMPR}. It then follows that
$$
n_L(L_p(\mu,X)) \geq \limsup_i n_L(X_i) \geq \inf_n n_L(\ell_p^n(X)) = n_L(\ell_p(X)).
$$
\end{proof}

\end{document}